\definecolor{gr}{rgb}   {0.,   0.69,   0.23 }
\definecolor{bl}{rgb}   {0.,   0.5,   1. }
\definecolor{mg}{rgb}   {0.85,  0.,    0.85}
\definecolor{or}{rgb}   {0.9,  0.5,   0.}
\newcommand{\Gr}{}
\newcommand{\Bk}{}
\newcommand{\dd}{\mathrm{d}}
\renewcommand{\Tilde}{\widetilde}
\newcommand{\spec}{\mathop{\mathrm{spec}}}
\newcommand{\ZZ}{\mathbb{Z}}
\newcommand{\RR}{\mathbb{R}}
\newcommand{\NN}{\mathbb{N}}
\newcommand{\TT}{\mathbb{T}}
\newcommand{\cO}{\mathcal{O}}
\newcommand{\cD}{\mathcal{D}}
\newcommand{\cH}{\mathcal{H}}
\newcommand{\cE}{\mathcal{E}}
\newcommand{\mx}{\mathrm{max}}
\newtheorem{theorem}{Theorem}[section]
\newtheorem{prop}[theorem]{Proposition}
\newtheorem{definition}[theorem]{Definition}
\newtheorem{cor}[theorem]{Corollary}
\newtheorem{lemma}[theorem]{Lemma}
\numberwithin{equation}{section}
\theoremstyle{definition}
\newtheorem{rem}[theorem]{Remark}
\begin{document}

\title[]{An effective Hamiltonian for
the eigenvalue asymptotics of the Robin Laplacian with a large parameter}

\author{Konstantin Pankrashkin}

\address{Laboratoire de math\'ematiques (UMR 8628 du CNRS), Universit\'e Paris-Sud, B\^atiment 425, 91405 Orsay Cedex, France}

\email{konstantin.pankrashkin@math.u-psud.fr}
\urladdr{http://www.math.u-psud.fr/~pankrash/}

\author{Nicolas Popoff}

\address{Institut math\'ematique de Bordeaux, Universit\'e Bordeaux 1,
351 cours de la lib\'eration, 33405 Talence Cedex, France }

\email{nicolas.popoff@math.u-bordeaux1.fr}
\urladdr{http://www.math.u-bordeaux1.fr/~npopoff/}

\begin{abstract}
We consider the Laplacian on a class of smooth domains $\Omega\subset \RR^{\nu}$, $\nu\ge 2$,
with attractive Robin boundary conditions:
\[
Q^\Omega_\alpha u=-\Delta u, \quad \dfrac{\partial u}{\partial n}=\alpha u  \text{ on } \partial\Omega, \ \alpha>0,
\]
where $n$ is the outer unit normal, and study the asymptotics of its eigenvalues $E_{j}(Q^\Omega_\alpha)$
as well as some other spectral properties for $\alpha\to+\infty$
We work with both compact domains and non-compact ones with a suitable behavior at infinity.
For domains with compact $C^2$ boundaries and fixed $j$, we show that
\[
E_{j}(Q^\Omega_\alpha)=-\alpha^2+\mu_j(\alpha)+{\mathcal O}(\log \alpha),
\]
where  $\mu_j(\alpha)$ is the $j^{\mbox{th}}$ eigenvalue, as soon as it exists,
of $-\Delta_{S}-(\nu-1)\alpha H$ with $(-\Delta_{S})$ and $H$ being respectively
the positive Laplace-Beltrami operator and the mean curvature on $\partial\Omega$.
Analogous results are obtained for a class of domains with non-compact
boundaries. In particular, we discuss the existence of eigenvalues
in non-compact domains and the existence of
spectral gaps for periodic domains.
We also show that
the remainder estimate can be improved under stronger regularity assumptions.

The effective Hamiltonian $-\Delta_{S}-(\nu-1)\alpha H$ enters
the framework of semi-classical Schr\"odinger operators on manifolds, and we provide
the asymptotics of its eigenvalues in the limit $\alpha\to+\infty$ under various
 geometrical assumptions.
In particular, we describe several cases for which our asymptotics provides gaps
between the eigenvalues of $Q^\Omega_\alpha$ for large $\alpha$.

\medskip

\centerline{\small{\it 2010 Mathematics Subject Classification}: 35P15, 35J05, 49R05, 58C40.}
\end{abstract}

\maketitle

\section{Introduction}

Let $\Omega\subset \RR^\nu$, $\nu\ge 2$, be an open set with a sufficiently regular
boundary $S:=\partial \Omega$. 
For $\alpha\in\RR$, denote by $Q^\Omega_\alpha$ the operator
$Q^\Omega_\alpha u =-\Delta u$ on the functions $u$ defined in $\Omega$
and satisfying the Robin boundary condition
\[
\dfrac{\partial u} {\partial n} =\alpha u \text{ on } S,
\]
where $n$ is the outer unit normal at $S$. More precisely, $Q^\Omega_\alpha$ is
the self-adjoint operator in $L^2(\Omega)$ associated with
the quadratic form $q^\Omega_\alpha$ defined on the domain
$\cD(q^\Omega_\alpha)=H^1(\Omega)$ by
\[
q^\Omega_\alpha(u,u)=\int_{\Omega} |\nabla u|^2\dd x -\alpha \int_S u^2\dd S,
\]
where $\dd S$ stands for the $(\nu-1)$-dimensional Hausdorff measure on $S$, which is closed and semibounded from below
under suitable assumptions (e.g. if $S$ is compact or with a suitable behavior at infinity, see below),
and we denote by $E_j^\Omega(Q^\Omega_\alpha)$ the $j^{\mbox{th}}$ eigenvalue of $Q^\Omega_\alpha$ below the bottom
of the essential spectrum, as soon as it exists. The aim of the paper is to obtain
new results on the asymptotics of the eigenvalues as $\alpha$ tends to $+\infty$.

The problem appears in various applications, such as reaction-diffusion processes \cite{LaOckSa98}
and the enhanced surface superconductivity \cite{GiSm07}, and the
related questions were already discussed in the previous works by various authors.
Let us present briefly the state of art for compact domains.
It was shown in \cite{LaOckSa98,LevPar08} that for piecewise smooth Liptschotz domain
one has $E_1(Q^\Omega_\alpha)=-C_\Omega \alpha^2+o(\alpha^2)$
as $\alpha\to+\infty$, where $C_\Omega\ge 1$ is a constant depending on the geometric properties of $\Omega$.
In particular, $C_\Omega=1$ for $C^1$ domains, see \cite{LouZhu04,DaKe10}. More detailed asymptotic expansions
for some specific non-smooth domains were considered in \cite{LevPar08,HP,kp14}.
As for smooth domains, a more detailed result was obtained first in \cite{Pank13,ExMinPar14} for $\nu=2$
and then in \cite{pp} for any $\nu\le 2$: if the domain is $C^3$ and $j\in\NN$ is fixed, then
\begin{equation}
      \label{eq-hhh}
E_{j}(Q^\Omega_\alpha)=-\alpha^2-(\nu-1) H_{\max}\alpha+\cO(\alpha^{2/3}),
\end{equation}
where $H_{\max}$ is the maximum of the mean curvature $H$ of the boundary (the exact definition
will be recalled below). Remark that this asymptotics together with isompermetric inequalities
for the mean curvature have played an important role
for the so-called reverse Faber-Krahn inequality, see \cite{FreKre14,pp}.
The result \eqref{eq-hhh} was also obtained in \cite{em} for a class of non-compact planar domains.

Although the asymptotics \eqref{eq-hhh} shows the influence of the geometry on first orders,
it is not sufficient to distinguish the influence of the number $j$ of the eigenvalue, and
to estimate the gap between eigenvalues. 
For $\nu=2$, a complete asymptotic expansion of the eigenvalues of the form 
\begin{equation}
    \label{eq-HK}
E_{j}(Q_{\alpha}^{\Omega}) = -\alpha^2-H_{\max}\alpha+(2j-1)\sqrt{\frac{\big|H''(s_{0})\big|}{2}}\,\alpha^{1/2}+\sum_{k=0}^{N}\gamma_{j,k}\alpha^{-\frac{j}{2}}+o(\alpha^{-\frac{N}{2}}),
\quad \gamma_{j,k}\in \RR,
\end{equation}
where $j\in \NN$ is arbitrary but fixed,
was proved in \cite{HK} proved  under the assumption that the curvature $s\mapsto H(s)$ admits a unique non-degenerated maximum at $s_{0}$ and the second derivative
is taken with respect to the arc-length. Such a hypothesis is reminiscent of several works about the first eigenvalues of the magnetic Laplacian in the semi-classical limit, see \cite{HeMo01}, which involves the localization of the eigenfunctions at the boundary and allow expansion of the associated eigenvalues. More precisely, such a maximum of the curvature acts as a potential well for Schr\"odinger operators 
in the harmonic approximation, see \cite{DiSj99}.

In order to describe our results, let us introduce the necessary notation and the class of domains we consider in this article.
Recall that $s\mapsto n(s)$ is the Gauss map on $S$, i.e. $n(s)$ is the outward pointing unit normal vector at $s\in S$.
Consider the shape operator $L_s$ at $s\in S$, which is defined by
$L_s:=\dd n(s): T_s S\to T_s S$,
and let $\kappa_1(s),\dots,\kappa_{\nu-1}(s)$ be its eigenvalues, called the principal curvatures.
Our results will be valid for the so-called \emph{$C^k$-admissible} domains defined as follows:

\begin{definition}
\label{D:admissible}
Let $k\geq2$. A domain $\Omega\subset \RR^{\nu}$ is called \emph{$C^{k}$-admissible},
if its boundary is $C^{k}$, and, in addition, the following holds: 
\begin{itemize}
\item[(H1)] There exists $\delta>0$ such that the map $\Phi$ defined by
\begin{equation}
      \label{D:TubNei}
\Sigma:=S\times (0,\delta)\ni (s,t)\mapsto \Phi(s,t):=s-tn(s)\in \Phi\big(S\times (0,\delta)\big)
\end{equation}
is a diffeomorphism and \Gr its image is contained in $\Omega$.
\item[(H2)] The curvatures $s\mapsto \kappa_{i}(s)$ are in $L^{\infty}(S)$. Moreover, if $k\geq3$, their gradients are also bounded.
\end{itemize}
\end{definition}
The assumption (H1) is quite standard if one deals with non-compact domains, and it is
sometimes called the \emph{non-overlap condition}, cf. e.g.~\cite{CarExKr04}.
We remark that any $C^k$ domain with a compact boundary is $C^k$-admissible.

In what follows we denote by $K(s)$ the sum of the principal curvatures:
\[
K(s)=\kappa_1(s)+\dots+\kappa_{\nu-1}(s)\equiv \mathop{\mathrm{tr}} L_s.
\]
Remark that the quantity $H:=K/(\nu-1)$  is exactly the mean curvature on $S$. We will denote
\[
K_\mx:=\sup_{s\in S} K(s).
\]
Note that the operator $Q^\Omega_\alpha$ can have a non-empty essential spectrum,
and it is more convenient to work with the Rayleigh quotients instead of the eigenvalues.
For this purpose, recall the min-max principle for the eigenvalues of self-adjoint
operators, see e.g.~\cite[Sec.~4.5]{davies}:
Let $Q$ be a lower semibounded  self-adjoint operator in a Hilbert space $\cH$ and $q$ be its quadratic form.
Denote 
\begin{align}
E(Q)&:=\inf\spec\nolimits_\text{ess} Q \quad \text{(we use the convention $\inf\emptyset=+\infty$)},\nonumber\\
   \label{E:QR}
E_j(Q)&:=\min_{\substack{L\subset \cD(q),\\ \dim L=j}} \max_{\substack{u\in L,\\ u\ne 0}}
\dfrac{q(u,u)}{\langle u, u\rangle}, \quad j\in \NN,
\end{align}
then:
\begin{itemize}
\item if $E_j(Q)<E(Q)$, then $E_j(Q)$ is the $j^{\mbox{th}}$ eigenvalue of $Q$,
\item if $E_j(Q)\ge E(Q)$, then $E_k(Q)=E(Q)$ for all $k\ge j$.
\end{itemize}

Our main result gives a comparison between the Rayleigh quotients of $Q^\Omega_\alpha$
with those of an auxiliary Schr\"odinger operator acting on the boundary and in which
the Robin coefficient $\alpha$ appears as a coupling constant:
\begin{theorem}\label{thm1}
Let $\nu\geq2$ and $\Omega\subset\RR^\nu$ be a $C^2$-admissible domain. Furthermore,
let $-\Delta_S$ denote the positive Laplace-Beltrami operator on $S$ viewed as
a self-adjoint operator in $L^2(S,dS)$. For any fixed $j\in \NN$ one has
\begin{equation}
       \label{eq-th1}
E_j(Q^\Omega_\alpha)=-\alpha^2 + E_j(-\Delta_S-\alpha K) +\cO(\log\alpha),
\quad \alpha\to+\infty.
\end{equation}
\end{theorem}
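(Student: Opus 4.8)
The plan is to prove the asymptotics \eqref{eq-th1} by a two-sided bracketing argument, establishing an upper bound and a lower bound on $E_j(Q^\Omega_\alpha)$ separately, both matching up to $\cO(\log\alpha)$. The natural starting point is the tubular neighbourhood $\Sigma=S\times(0,\delta)$ furnished by (H1): using the diffeomorphism $\Phi$, I would change variables in the quadratic form $q^\Omega_\alpha$ restricted to functions supported in $\Sigma$. In the coordinates $(s,t)$ the Euclidean metric becomes $\dd t^2 + g_t(s)$, where $g_t$ is the induced metric on the parallel hypersurface at distance $t$, and the Jacobian is $J(s,t)=\prod_{i=1}^{\nu-1}(1-t\kappa_i(s))=1-tK(s)+\cO(t^2)$, all estimates being uniform thanks to (H2). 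The transformed form has the schematic shape
\[
\int_\Sigma \Big( |\partial_t u|^2 + \langle G_t \nabla_s u,\nabla_s u\rangle \Big) J\,\dd s\,\dd t - \alpha\int_S |u(s,0)|^2\,\dd s,
\]
and after the standard unitary rescaling that flattens the weight $J$ (conjugation by $J^{1/2}$), the transverse part becomes a one-dimensional Robin Laplacian on $(0,\delta)$ whose ground state energy is $\approx-\alpha^2$ with exponentially localized eigenfunction $\sim\sqrt{2\alpha}\,e^{-\alpha t}$, while the tangential part produces, to leading order, $-\Delta_S$ and the potential term $-\alpha K(s)$ coming from the first-order expansion of $J$ and $G_t$ in $t$ (the $t$-integral against $e^{-2\alpha t}$ turning a factor $t$ into a factor $1/(2\alpha)$, hence the coupling $\alpha K$ after multiplying back by $\alpha^2$ from normalization — more precisely the cross term $\alpha\cdot\alpha\cdot(2\alpha)^{-1}\cdot\text{const}$).

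For the \textbf{upper bound} I would use the min-max characterization \eqref{E:QR}: take the first $j$ eigenfunctions $\psi_1,\dots,\psi_j$ of $-\Delta_S-\alpha K$ on $S$, form the test functions $u_i(s,t)=\chi(t)\sqrt{2\alpha}\,e^{-\alpha t}\psi_i(s)$ on $\Sigma$ with $\chi$ a cutoff equal to $1$ near $t=0$ and supported in $(0,\delta)$, extend by zero to $\Omega$, and insert the span $L=\mathrm{span}(u_1,\dots,u_j)$ into the Rayleigh quotient. Evaluating $q^\Omega_\alpha$ on $L$ using the change of variables above and controlling the error terms — the $\cO(t^2)$ corrections in $J$ and $G_t$ (contributing $\cO(1)$ after integration), the commutator of $\chi$ with $\partial_t$ (exponentially small in $\alpha$), and crucially the fact that $\|\psi_i\|_{H^1(S)}^2 = E_i(-\Delta_S-\alpha K)+\alpha\int K|\psi_i|^2 = \cO(\alpha)$ while $\||\nabla_s|^? $ higher norms are needed to bound the $G_t-\mathrm{Id}$ term — one gets $E_j(Q^\Omega_\alpha)\le -\alpha^2+E_j(-\Delta_S-\alpha K)+\cO(?)$. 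Here is where the $\log\alpha$ appears: bounding the quadratic-form error involves $\|\psi_i\|$ in an intermediate Sobolev norm weighted against $t\,e^{-2\alpha t}$, and an elliptic-regularity / interpolation estimate for $\psi_i$ in terms of $E_i(-\Delta_S-\alpha K)$ (which itself is $\sim -c\alpha$) produces a $\log\alpha$ factor rather than an $\cO(1)$; alternatively it arises from the competition between the scale $\alpha^{-1}$ of transverse localization and the curvature oscillation.

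For the \textbf{lower bound} I would proceed by the standard IMS-type localization: cover $\Omega$ by the collar $\Sigma$ and the "interior" $\Omega\setminus\Sigma_{\delta/2}$, use a quadratic partition of unity $\chi_{\text{in}}^2+\chi_{\text{out}}^2=1$, and note that on the interior piece $q^\Omega_\alpha$ has no boundary term so its form is bounded below by $0$ (or by the first Neumann-type eigenvalue), certainly $\gg -\alpha^2+E_j(-\Delta_S-\alpha K)$ since the latter is $\sim -c\alpha$. On the collar piece, the change of variables reduces the form to the effective operator plus controlled errors; the key technical point is to bound below the transverse one-dimensional Robin form on $(0,\delta)$ with Dirichlet condition at $t=\delta$ by its ground-state energy $-\alpha^2 + \cO(e^{-c\alpha})$ (with a spectral gap of order $\alpha^2$ to the next level), allowing one to "freeze" the transverse mode and reduce to the operator $-\Delta_S-\alpha K$ on $S$ acting on the transverse ground-state fiber, modulo errors; then $E_j$ of the collar form is $\ge -\alpha^2+E_j(-\Delta_S-\alpha K)-\cO(\log\alpha)$, the error again entering through the same mechanism. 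Summing the two pieces via min-max for forms with partition of unity (the IMS localization formula contributing $\|\nabla\chi_{\text{in}}\|_\infty^2=\cO(1)$) finishes the bound. \textbf{The main obstacle} I anticipate is the lower bound on the collar: controlling the tangential operator uniformly in $t\in(0,\delta)$ — the metric $g_t$ and the potential $tK$ vary with $t$, so one cannot literally separate variables, and one must absorb the $t$-dependence into error terms while keeping track that the tangential "kinetic energy" $\langle G_t\nabla_s u,\nabla_s u\rangle$ can be large (of order $\alpha$) for the relevant $u$; squeezing the resulting error down to exactly $\cO(\log\alpha)$, rather than a worse power of $\alpha$, is the delicate part and is presumably where the sharp choice of cutoff scales and a careful use of (H2) enter.
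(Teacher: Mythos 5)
Your overall architecture is the right one and matches the paper in outline: a collar $S\times(0,\delta)$, the transverse one-dimensional Robin ground state with energy $-\alpha^2+\cO(\alpha^2e^{-\delta\alpha})$, product test functions for the upper bound, and a reduction to $-\Delta_S-\alpha K$ (the paper uses Dirichlet--Neumann bracketing on the artificial boundary rather than IMS cutoffs, and it tests on arbitrary $j$-dimensional subspaces of $H^1(S)$ rather than on eigenfunctions of the effective operator -- a point you need too, since for non-compact $S$ the numbers $E_j(-\Delta_S-\alpha K)$ need not be eigenvalues). The genuine gap is quantitative: you never identify where the $\cO(\log\alpha)$ actually comes from. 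In the paper it is the optimization over $\delta$: the geometric errors (the $(1\pm C\delta)$ distortion of the tangential metric and the $\cO(\delta)$ error in the weight $\varphi=1-K(s)t+\cO(t^2)$) enter multiplied by effective energies of size $\cO(\alpha)$, hence contribute $\cO(\delta\alpha)$, while the transverse eigenvalue/eigenfunction asymptotics contribute $\cO(\alpha^2e^{-\delta\alpha})$; choosing $\delta=b\log\alpha/\alpha$ with $b\ge 2$ balances the two and produces exactly $\cO(\log\alpha)$. Your proposed mechanisms for the logarithm (an ``elliptic-regularity / interpolation estimate'' for the eigenfunctions, or a ``competition between the scale $\alpha^{-1}$ and curvature oscillation'') are not substantiated and are not how the remainder arises; without specifying $\delta$ as a function of $\alpha$ and tracking these two competing error terms, the claimed remainder is unsupported.

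Two steps in your lower bound would fail as written. First, the IMS claim $\|\nabla\chi_{\mathrm{in}}\|_\infty^2=\cO(1)$ forces cutoffs varying on a fixed scale, hence a collar of fixed width, and then the metric/weight errors are $\cO(\delta\alpha)=\cO(\alpha)$, destroying the asymptotics; if instead you shrink the collar to $\delta\sim\log\alpha/\alpha$ (as you must), the localization error becomes $\cO(\delta^{-2})=\cO(\alpha^2/\log^2\alpha)$ and has to be absorbed, e.g.\ against the $\cO(\alpha^2)$ transverse spectral gap together with the exponential smallness of the transverse mode in the transition region -- an argument you do not give, and which the paper avoids entirely by Neumann bracketing (no localization error, the interior Neumann Laplacian being $\ge 0$). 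Second, ``freezing the transverse mode \dots modulo errors'' skips the actual content of the lower bound: one must write $u(s,t)=v(s)\psi(t)+w(s,t)$ with $v(s)=\int_0^\delta\psi u\,\dd t$ and control the cross terms, which do not vanish because the weight $\varphi$ depends on $t$; the paper handles them by an integration by parts using the boundary conditions of $\psi$, the orthogonality of $w$ to $\psi$, the positivity \eqref{eq-fpsi} of the transverse form on the orthogonal complement, and a parametric Cauchy--Schwarz with parameter $r\sim\alpha^{-2}$, which yields the coercive term $\tfrac{\alpha^2}{2}\|w\|^2$ needed to close the argument. You correctly flag this as the main obstacle but do not resolve it, so the proposal is a plausible plan rather than a proof.
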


The proof is presented in Sections \ref{th1pr1} and \ref{SS:lb1}.
Furthermore, in Section \ref{th2pr} we show that the remainder
estimate can be improved under additional assumptions:

\begin{theorem}\label{thm2}
Let the assumptions of Theorem~\ref{thm1} hold. In addition, assume that
$\Omega$ is $C^{3}$-admissible and that $K$ reaches its maximum, then for any fixed $j\in\NN$ one has
\begin{equation}
       \label{eq-th1a}
E_j(Q^\Omega_\alpha)=-\alpha^2 + E_j(-\Delta_S-\alpha K) +\cO(1), \quad \alpha\to+\infty.
\end{equation} 
\end{theorem}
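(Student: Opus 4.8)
The plan is to revisit the two-sided estimate leading to Theorem~\ref{thm1} and sharpen the remainder from $\mathcal{O}(\log\alpha)$ to $\mathcal{O}(1)$ by exploiting the extra regularity and the fact that $K$ attains its maximum. Recall that the standard strategy consists of using the tubular coordinates $(s,t)$ from (H1) on the collar $\Sigma=S\times(0,\delta)$, conjugating $Q^\Omega_\alpha$ by the Jacobian of $\Phi$ so as to work in $L^2(S\times(0,\delta),\,\dd S\,\dd t)$, and then comparing, after the rescaling $t=\tau/\alpha$, the transverse part with the one-dimensional model operator $-\partial_\tau^2$ on $(0,\alpha\delta)$ with the Robin condition $\partial_\tau u=u$ at $\tau=0$, whose lowest eigenvalue is $-1$ with eigenfunction $e^{-\tau}$. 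The first step is therefore to record the precise form of the conjugated operator: the weight is $\prod_i(1-t\kappa_i(s))$, and expanding it in $t$ produces the leading transverse operator, a term $-tK(s)(\text{transverse part})$ responsible for the $-\alpha K$ shift, a tangential Laplace--Beltrami piece in the metric $g_t$ on the parallel hypersurface, and curvature error terms. Under $C^3$-admissibility the principal curvatures and their gradients are bounded, so these error terms are controlled by $t^2$ and $t\,|\partial_s\kappa|$ uniformly.

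The key quantitative input is the following localization/decay estimate, which must be established carefully: any normalized eigenfunction $u$ of $Q^\Omega_\alpha$ with eigenvalue $E_j(Q^\Omega_\alpha)=-\alpha^2+\mathcal{O}(\alpha)$ satisfies an Agmon-type bound $\int_\Omega e^{2c\alpha\,\mathrm{dist}(x,S)}\big(|u|^2+\alpha^{-2}|\nabla u|^2\big)\,\dd x\le C$ for some $c\in(0,1)$, together with the companion estimate that the portion of the mass outside the collar $\Sigma$ is $\mathcal{O}(e^{-c\alpha\delta})$, hence super-polynomially small. In the previous theorem this exponential localization is used somewhat crudely; here I would use it in the sharper form that replaces the finite interval $(0,\alpha\delta)$ by the half-line $(0,+\infty)$ at the cost of an exponentially small error, so the only surviving discrepancy between $Q^\Omega_\alpha+\alpha^2$ and the effective operator $-\Delta_S-\alpha K$ comes from the $t$-expansion error terms integrated against $e^{-2\tau}$. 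For the \textbf{upper bound} on $E_j(Q^\Omega_\alpha)$ one inserts trial states of the product form $\phi_k(s)\,\sqrt{\alpha}\,e^{-\alpha t}$, $k=1,\dots,j$, where $\phi_1,\dots,\phi_j$ span the $j$-dimensional spectral subspace of $-\Delta_S-\alpha K$; the cross terms produced by $g_t$ versus $g_0$ and by the weight are then $\mathcal{O}(1)$ once one integrates $\int_0^\infty \tau^2 e^{-2\tau}\,\dd\tau<\infty$, giving $E_j(Q^\Omega_\alpha)\le -\alpha^2+E_j(-\Delta_S-\alpha K)+\mathcal{O}(1)$. For the \textbf{lower bound} one decomposes a test function as $u(s,t)=f(s)\sqrt{\alpha}e^{-\alpha t}+w(s,t)$ with $w$ orthogonal to $e^{-\alpha t}$ in $t$ for a.e.\ $s$; the transverse form then has a spectral gap of order $\alpha^2$ above its ground state, which absorbs all the error terms, and the reduced form on $f$ is $q_{-\Delta_S-\alpha K}(f)+\mathcal{O}(1)\|f\|^2$ after using boundedness of $\kappa_i$ and $\partial_s\kappa_i$ and the fact that $-\Delta_S-\alpha K$ is bounded below by $-C\alpha$ uniformly (here is where "$K$ reaches its maximum" is used, to control $E_j(-\Delta_S-\alpha K)$ and to ensure the relevant eigenfunctions are well localized near the maximum set, so that the tangential derivatives appearing in the error terms can be traded against the form of the effective operator with a bounded loss rather than a logarithmic one).

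The main obstacle is precisely the treatment of the tangential error terms in the lower bound: in the proof of Theorem~\ref{thm1} the step $\big|\int_S \partial_s\kappa\cdot(\cdots)\big|\le \text{const}\cdot(\text{something})$ is handled by a Cauchy--Schwarz splitting that costs a factor $\log\alpha$ because the tangential kinetic energy $\|\nabla_S f\|^2$ can be as large as a power of $\alpha$ while one only wants to pay $\mathcal{O}(1)$. To remove the logarithm one must instead absorb a small multiple $\varepsilon\|\nabla_S f\|^2$ of the tangential energy (legitimate since it already appears in $q_{-\Delta_S-\alpha K}$), use the $C^3$ bound $\|\partial_s\kappa\|_\infty<\infty$ to bound the remaining factor by $C_\varepsilon\|f\|^2$, and, crucially, invoke the concavity/maximum structure of $K$ near its maximum point(s) so that the eigenfunctions $\phi_k$ of the effective operator are concentrated where the "cost" of this absorption is uniformly bounded; an alternative and perhaps cleaner route is to conjugate away the first-order-in-$t$ tangential error by a further unitary (a well-chosen $t$-dependent change of the tangential metric), reducing everything to genuinely $t^2$-order remainders which are automatically $\mathcal{O}(1)$ after the Gaussian-type integration. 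I would carry out the argument in the order: (i) conjugation and expansion with explicit $C^3$ error bounds; (ii) Agmon localization with exponentially small tails; (iii) the upper bound via product trial states; (iv) the lower bound via the transverse spectral gap and careful absorption of tangential errors; (v) conclude by combining (iii) and (iv).
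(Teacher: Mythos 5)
Your overall architecture (collar coordinates, a one-dimensional Robin model in the normal variable, the decomposition $u=f\psi+w$ with a transverse spectral gap) is exactly the machinery already used for Theorem~\ref{thm1}; the difficulty is where you say it is, but your diagnosis of the $\log\alpha$ loss is inaccurate and your two proposed remedies do not close the gap. In the $C^2$ proof there is no term $\partial_s\kappa$ and no Cauchy--Schwarz step ``costing a log'': the $\log\alpha$ arises because the weighted volume $\varphi\,\dd S\,\dd t$ and the $(1\pm C\delta)$ distortion of the tangential metric produce relative errors of size $\delta$ against the leading term $-\alpha K_{\max}$, hence additive errors $\cO(\delta\alpha)$, while the transverse model on $(0,\delta)$ forces $\delta\alpha\gtrsim\log\alpha$ in order to control $\alpha^2e^{-\delta\alpha}$. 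The paper's proof of Theorem~\ref{thm2} removes this through two ingredients absent from your plan: (i) the unitary $u\mapsto\varphi^{-1/2}u$ passing to the unweighted space $L^2(\Sigma,\dd S\,\dd t)$, so that the pairing of the norm with $-\alpha^2$ is exact and the weight reappears only as the boundary potential $-\tfrac12 K(s)\,u(s,0)^2$ at $t=0$, plus $\cO(1)\|u\|^2$ bulk errors and a boundary term at $t=\delta$ handled by the Robin model $T^\beta$ of Lemma~\ref{lemn} together with Lemma~\ref{lemsob}; this is precisely where $C^3$-admissibility enters, via the boundedness of $\nabla_s\partial_t(\varphi^{-1/2})$; and (ii) the rough bound $E_j(-\Delta_S-\alpha K)=-K_{\max}\alpha+\cO(\alpha^{2/3})$ (Lemma~\ref{lem23}), which is the only place where the hypothesis that $K$ attains its maximum is used: it shows that absorbing the residual $(1\pm c_0\delta)$ tangential distortion costs $\cO\big(\delta\,E_j(-\Delta_S+\alpha(K_{\max}-K))\big)=\cO(\delta\alpha^{2/3})$, which is $\cO(1)$ for the new choice $\delta=\alpha^{-\kappa}$, $\kappa\in[2/3,1)$ (Lemma~\ref{L:vpopC3}).

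Neither of your suggested repairs substitutes for these steps. Localization of the eigenfunctions of $-\Delta_S-\alpha K$ near the maximum set is not available under the hypotheses of Theorem~\ref{thm2}: nothing is assumed about the structure of $K^{-1}(\{K_{\max}\})$, and no quantitative concentration estimate is formulated or proved in your sketch, so the claimed $\cO(1)$ cost of trading $\varepsilon\|\nabla_S f\|^2$ remains unproved --- with only the crude bound $E_j(-\Delta_S+\alpha(K_{\max}-K))=\cO(\alpha)$ you are back to $\cO(\delta\alpha)$, i.e.\ the same $\log\alpha$. The alternative of ``conjugating away the first-order-in-$t$ tangential error by a further unitary'' is not available either: a multiplicative transformation such as $\varphi^{-1/2}$ removes the volume weight, but it cannot remove the $t$-dependence $(I_s-tL_s)^{-2}$ of the tangential part of the metric; the paper keeps that distortion and controls it through ingredient (ii). The Agmon reduction to the half-line is not needed in the bracketing framework and would not by itself yield a form lower bound valid for all test functions; in particular the boundary term at $t=\delta$, which in the paper requires the modified model operator $T^\beta$ and Lemma~\ref{lemsob}, is not addressed. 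Your upper bound via product trial states is essentially the paper's (with $e^{-\alpha t}$ in place of the model eigenfunction), provided the transverse eigenvalue times the exact norm is subtracted before expanding the Rayleigh quotient; but even there the final comparison of $E_j\big(-(1+c\delta)\Delta_S-\alpha K\big)$ with $E_j(-\Delta_S-\alpha K)$ again requires ingredient (ii), so the key step of the theorem is missing from the proposal.
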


Let us emphasize on the fact that no assumptions are done on the behavior of $K$ near the set
$K^{-1}(K_\text{max})$ for the above results. Using various methods available for the study
of the effective Hamiltonian $-\Delta_S-\alpha K$ one can deduce more precise asymptotics
under various assumptions, which improve the results of preceding works
of weaken the respective assumptions. In particular, as a generalization of \eqref{eq-hhh}
in Section~\ref{secor} we obtain:
\begin{cor}\label{cor1}
Let $\Omega\subset \RR^\nu$ be a $C^{2}$-admissible domain, then for each fixed $j\in\NN$
we have $E_j(Q^\Omega_\alpha)=-\alpha^2-K_\mx \alpha +o(\alpha)$.
\end{cor}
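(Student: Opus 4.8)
The plan is to derive Corollary~\ref{cor1} directly from Theorem~\ref{thm1} by analyzing the asymptotic behaviour of the $j^{\text{th}}$ Rayleigh quotient of the effective Hamiltonian $H_\alpha:=-\Delta_S-\alpha K$ on $L^2(S,dS)$. By Theorem~\ref{thm1} we have $E_j(Q^\Omega_\alpha)=-\alpha^2+E_j(H_\alpha)+\cO(\log\alpha)$, so since $\log\alpha=o(\alpha)$ it suffices to prove that
\begin{equation}
       \label{eq-cor-goal}
E_j(H_\alpha)=-K_\mx\,\alpha+o(\alpha),\qquad \alpha\to+\infty,
\end{equation}
for each fixed $j$. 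I will establish \eqref{eq-cor-goal} by proving the matching upper and lower bounds separately, exploiting the fact that $K\in L^\infty(S)$ by assumption (H2).

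For the lower bound, the operator inequality $-\alpha K\ge -\alpha K_\mx$ on $L^2(S,dS)$ is immediate from the definition of $K_\mx=\sup_S K$, and $-\Delta_S\ge 0$; hence $H_\alpha\ge -K_\mx\,\alpha$ as quadratic forms, which by the min-max principle \eqref{E:QR} gives $E_j(H_\alpha)\ge -K_\mx\,\alpha$ for every $j$ and every $\alpha>0$. This is in fact better than what \eqref{eq-cor-goal} demands. The substance of the argument is therefore the upper bound $E_j(H_\alpha)\le -K_\mx\,\alpha+o(\alpha)$. The idea is to build a $j$-dimensional trial space on which the Rayleigh quotient is small. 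Fix $\varepsilon>0$ and pick a point $s_\ast\in S$ (or, if the supremum is not attained, a point) with $K(s_\ast)>K_\mx-\varepsilon$; by continuity of $K$ there is an open neighbourhood $U\subset S$ of $s_\ast$ on which $K>K_\mx-\varepsilon$. Inside $U$ one can place $j$ disjoint coordinate balls $B_1,\dots,B_j$ of some small radius $r$ and, after rescaling a fixed smooth bump function to each $B_i$, obtain functions $\psi_1,\dots,\psi_j\in C_c^\infty(U)\subset H^1(S)$ with disjoint supports. For each $i$, the Dirichlet energy scales like $\int_S|\nabla_S\psi_i|^2\,dS=\cO(r^{\nu-3})$ relative to $\int_S\psi_i^2\,dS=\cO(r^{\nu-1})$, so the kinetic part of the Rayleigh quotient of any $\psi_i$ is $\cO(r^{-2})$, independent of $\alpha$, while the potential part satisfies $-\alpha\int_S K\psi_i^2\,dS\le -\alpha(K_\mx-\varepsilon)\int_S\psi_i^2\,dS$. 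Since the supports are disjoint, on the span $L=\mathrm{span}\{\psi_1,\dots,\psi_j\}$ every element $u=\sum c_i\psi_i$ has $q_{H_\alpha}(u,u)/\langle u,u\rangle\le C r^{-2}-\alpha(K_\mx-\varepsilon)$ for a constant $C$ depending only on the fixed bump and the local geometry. Hence $E_j(H_\alpha)\le -\alpha(K_\mx-\varepsilon)+Cr^{-2}$; choosing $r$ fixed and then letting $\alpha\to+\infty$ gives $\limsup_{\alpha\to+\infty}E_j(H_\alpha)/\alpha\le -(K_\mx-\varepsilon)$, and since $\varepsilon>0$ is arbitrary, $\limsup_{\alpha\to+\infty}E_j(H_\alpha)/\alpha\le -K_\mx$. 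Combined with the lower bound this yields \eqref{eq-cor-goal} and hence the corollary.

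The main technical point to be careful about is the construction of the $j$ disjoint trial functions when $S$ is merely a $C^2$-admissible (possibly non-compact) boundary: one must verify that a sufficiently small neighbourhood of a near-maximiser of $K$ contains $j$ disjoint geodesic or coordinate balls and that the Laplace--Beltrami energies of the rescaled bumps are controlled uniformly — this is routine because it is an entirely local computation in a single chart, and (H2) is not even needed here beyond $K\in L^\infty$ for the lower bound. If $K_\mx=+\infty$ the statement should be read as $E_j(Q^\Omega_\alpha)+\alpha^2\to-\infty$ faster than any multiple of $\alpha$, which the same trial-function argument delivers; but under (H2) one has $K\in L^\infty(S)$, so $K_\mx<\infty$ and no such complication arises. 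No assumption on the behaviour of $K$ near its supremum is used, consistent with the remark preceding the corollary.
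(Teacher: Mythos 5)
Your argument is correct and follows the same route as the paper: Theorem~\ref{thm1} reduces the corollary to the asymptotics $E_j(-\Delta_S-\alpha K)=-K_\mx\alpha+o(\alpha)$, which is exactly the paper's Lemma~\ref{lemfk}. The only difference is that the paper obtains that lemma by citing an adaptation of \cite[Proposition~1]{FK}, whereas you prove it directly — trivial lower bound from $-\Delta_S\ge0$ and $K\le K_\mx$, and an upper bound via $j$ disjoint bump functions concentrated near an $\varepsilon$-near maximizer of $K$ (so the argument also covers a non-attained supremum) — which is a sound, self-contained substitute, close in spirit to the paper's own Lemma~\ref{lem23} but under weaker hypotheses since only $o(\alpha)$ is needed.
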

\begin{rem}
If the boundary $S$ is compact, then either $\spec_\text{ess} Q^\Omega_\alpha=\emptyset$ (if $\Omega$ is bounded)
or $\spec_\text{ess} Q^\Omega_\alpha=[0,+\infty)$ (if $\Omega$ is unbounded). In the latter case,
it is standard to check that $Q^\Omega_\alpha$ has at most finitely many negative eigenvalues.
On the other hand, for any fixed $j$ one has $E_j(Q^\Omega_\alpha)<0$ if $\alpha$ is sufficiently large,
in particular, $E_j(Q^\Omega_\alpha)<E(Q^\Omega_\alpha)$. 
Therefore, by the min-max principle, each $E_j^\Omega(\alpha)$ is an eigenvalue of $Q^\Omega_\alpha$
if $\alpha$ is sufficiently large.

The preceding observation does not hold for domain with non-compact boundaries.
In particular, in \cite{em} one can find various examples of domains $\Omega$
with curved non-compact boundaries such that the respective operators $Q^\Omega_\alpha$
have a purely essential spectrum for any  $\alpha>0$.
Nevertheless, the existence of eigenvalues can be guaranteed
by an additional assumption:
\begin{equation}
\label{E:H3}
\mbox{(H3): }\quad K_\infty :=\limsup_{s\to \infty} K(s) < K_{\max},
\end{equation}
which allows one to prove the following result extending several estimates of~\cite{em}:
\begin{cor}\label{cor1a}
Let $\Omega\subset\RR^\nu$ be $C^k$-admissible with non-compact boundary and satisfy \eqref{E:H3}, then for any 
$N\in \NN$ there exists $\alpha_N>0$ such that for $\alpha>\alpha_{N}$ the operator $Q^\Omega_\alpha$
has at least $N$ eigenvalues below the essential spectrum. The behavior of the $j^{\mbox{th}}$ eigenvalue $E^\Omega_j(\alpha)$
with a fixed $j$ is given by \eqref{eq-th1} or, if $\Omega$ is $C^3$, by \eqref{eq-th1a}.
\end{cor}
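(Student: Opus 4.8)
The plan is to derive Corollary~\ref{cor1a} from Theorem~\ref{thm1} (or Theorem~\ref{thm2}) together with the spectral analysis of the effective Hamiltonian $-\Delta_S-\alpha K$ on the non-compact manifold $S$. The key point is that hypothesis (H3) guarantees that the potential $-\alpha K$ has a well-like structure: its infimum on a compact part of $S$ lies strictly below its behavior near infinity. So the first step is to establish the two-sided estimate
\[
\inf\spec\nolimits_\text{ess}\big(-\Delta_S-\alpha K\big)\geq -\alpha K_\infty+o(\alpha),
\qquad \alpha\to+\infty,
\]
which follows from a Persson-type characterization of the bottom of the essential spectrum of $-\Delta_S-\alpha K$ on $S$: outside a large compact set, $K(s)\leq K_\infty+\varepsilon$, hence the form $\int_S|\nabla_S u|^2-\alpha K|u|^2$ restricted to functions supported there is bounded below by $-\alpha(K_\infty+\varepsilon)\|u\|^2$.

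The second step is to produce, for each $N$, at least $N$ Rayleigh quotients of $-\Delta_S-\alpha K$ lying strictly below $-\alpha K_\infty$ for $\alpha$ large. Since $K_\mx>K_\infty$, pick a point $s_\star\in S$ with $K(s_\star)$ close to $K_\mx$, and fix a small coordinate ball around it; using $N$ bump functions supported in that ball with disjoint supports (or, more efficiently, rescaled bumps concentrating near $s_\star$ as $\alpha\to\infty$), one gets an $N$-dimensional test space on which the quadratic form of $-\Delta_S-\alpha K$ is at most $-\alpha(K_\mx-\varepsilon)+C_\varepsilon$. Choosing $\varepsilon$ small enough that $K_\mx-\varepsilon>K_\infty$, the min-max principle \eqref{E:QR} gives $E_N(-\Delta_S-\alpha K)<-\alpha K_\infty\leq E(-\Delta_S-\alpha K)-o(\alpha)$ for $\alpha>\alpha_N$; hence $E_1,\dots,E_N$ are genuine eigenvalues of $-\Delta_S-\alpha K$ below its essential spectrum.

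The third step transfers this to $Q^\Omega_\alpha$. By Theorem~\ref{thm1}, for fixed $j$ one has $E_j(Q^\Omega_\alpha)=-\alpha^2+E_j(-\Delta_S-\alpha K)+\cO(\log\alpha)$. Combined with the lower bound $\inf\spec_\text{ess}(-\Delta_S-\alpha K)\geq-\alpha K_\infty+o(\alpha)$ and the upper bound $E_j(-\Delta_S-\alpha K)\leq-\alpha(K_\mx-\varepsilon)+C_\varepsilon$ from step~2, we get for each $j\leq N$ that $E_j(Q^\Omega_\alpha)\leq -\alpha^2-\alpha(K_\mx-\varepsilon)+\cO(\log\alpha)$, while a Persson-type argument for $Q^\Omega_\alpha$ itself (again using (H1), (H3), and the local form estimates already developed in the proof of Theorem~\ref{thm1} near infinity) shows $E(Q^\Omega_\alpha)\geq-\alpha^2-\alpha K_\infty+o(\alpha)$. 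Since $K_\mx-\varepsilon>K_\infty$, this yields $E_j(Q^\Omega_\alpha)<E(Q^\Omega_\alpha)$ for $\alpha>\alpha_N$ and all $j\leq N$, so by the min-max principle these are $N$ eigenvalues of $Q^\Omega_\alpha$ below the essential spectrum; the stated asymptotics \eqref{eq-th1}, resp.\ \eqref{eq-th1a} in the $C^3$ case, is then exactly Theorem~\ref{thm1}, resp.\ Theorem~\ref{thm2}.

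The main obstacle is the Persson-type lower bound on $\inf\spec_\text{ess}Q^\Omega_\alpha$ with the correct constant $-\alpha^2-\alpha K_\infty+o(\alpha)$: one must show that the essential spectrum is not pushed down below this level by the geometry at infinity. This requires revisiting the localized estimates from the proof of Theorem~\ref{thm1} — the tubular-neighborhood change of variables via $\Phi$ from (H1) and the comparison with a one-dimensional model operator on $(0,\delta)$ — and checking that all error terms near infinity are controlled by (H2) (bounded curvatures) and that the leading boundary potential there is governed by $K_\infty$ via (H3). Once that uniform-at-infinity control is in place, everything else is a routine application of min-max and the already-proven theorems.
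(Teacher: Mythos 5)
Your overall architecture is the right one and in fact mirrors the paper's: an upper bound $E_N(Q^\Omega_\alpha)\le -\alpha^2-(K_\mx-\varepsilon)\alpha+o(\alpha)$ for the first $N$ Rayleigh quotients, a lower bound $E(Q^\Omega_\alpha)\ge -\alpha^2-K_\infty\alpha+o(\alpha)$ for the bottom of the essential spectrum, then (H3) plus the min-max principle to conclude, with the asymptotics supplied by Theorems~\ref{thm1} and~\ref{thm2}. But the decisive ingredient is exactly the one you flag as ``the main obstacle'' and then leave as a program: the lower bound on $\inf\spec_\text{ess}Q^\Omega_\alpha$ with the constant governed by $K_\infty$. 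Without that estimate there is no proof that the Rayleigh quotients lie below the essential spectrum, so the corollary is not established; saying that it ``requires revisiting the localized estimates'' and that ``once that control is in place everything else is routine'' is a statement of the difficulty, not its resolution. Note also that a literal Persson-type characterization for Robin forms on unbounded domains is not among the tools the paper sets up, so invoking it without proof is an additional unproved step.

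The paper closes precisely this gap with Lemma~\ref{lemess}: given $K_0>K_\infty$, choose a compact $S_0\subset S$ with $K\le K_0$ on $S_1:=S\setminus\overline{S_0}$, and impose Neumann-type bracketing along the decomposition $\Omega^0_\delta=\Phi(S_0\times(0,\delta))$, $\Omega^1_\delta=\Phi(S_1\times(0,\delta))$, $\Theta_\delta$. The bounded piece $Q^0_\alpha$ has empty essential spectrum, the Neumann Laplacian on $\Theta_\delta$ is nonnegative, and the lower-bound machinery of Section~\ref{SS:lb1} applies verbatim to $Q^1_\alpha$, giving $E_1(Q^1_\alpha)\ge-\alpha^2+E_1(-\Delta^N_{S_1}-\alpha K)+\cO(\log\alpha)\ge-\alpha^2-K_0\alpha+\cO(\log\alpha)$; letting $K_0\downarrow K_\infty$ yields the claim. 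Your sketch points in this direction, but you would need to carry out this bracketing (or an equivalent uniform-at-infinity estimate) explicitly. Separately, your Steps 1 and 2 concerning the essential spectrum and eigenvalues of the effective Hamiltonian $-\Delta_S-\alpha K$ are not needed: the statements only involve the min-max values $E_j(-\Delta_S-\alpha K)$, which enter \eqref{eq-th1}--\eqref{eq-th1a} whether or not they are eigenvalues, and the upper bound you build with bump functions is already the content of Lemma~\ref{lemfk} (via Corollary~\ref{cor1}).
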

The proof is given in Section~\ref{secor}.
\end{rem}

More detailed asymptotic expansions for the eigenvalues can be deduced
by using the toolbox of the semi-classical analysis of Schr\"odinger operators on manifolds,
where the mean curvature acts as a potential. In Section \ref{S:semiclassical} we describe the results
involved by standard hypotheses on the potential $K$. 
In particular, the following hold:
\begin{cor}
\label{C:nondegenerate}
Let $\Omega\subset \RR^\nu$ be a $C^5$-admissible domain. If $\partial\Omega$ is non-compact, assume \eqref{E:H3}.
Furthermore, assume that $K$ admits a unique global maximum at $s_{0}$ and that the Hessian of $(-K)$
at $s_{0}$ is positive-definite. Denote by $\mu_{k}$ its eigenvalues
and set
\[
\cE=\bigg\{ \sum_{k=1}^{\nu-1}\sqrt{\frac{\mu_{k}}{2}}\,\big(2n_{k}-1\big), n_{k}\in \NN\bigg\}.
\]
Then for each $j\in\NN$ there holds, as $\alpha\to+\infty$:
$$E_{j}(Q^\Omega_\alpha)=-\alpha^2-K_{\max}\alpha+e_{j}\alpha^{1/2}+\cO(\alpha^{1/4}),$$
where $e_{j}$ is the $j^{\mbox{th}}$ element of $\cE$, counted with multiplicity.
Moreover, if $e_{j}$ is of multiplicity one, the remainder estimate can be improved to $\cO(1)$. 
\end{cor}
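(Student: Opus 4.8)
The plan is to combine Theorem~\ref{thm2} with a harmonic-approximation analysis of the effective operator $-\Delta_S-\alpha K$, regarded as a semiclassical Schr\"odinger operator on $S$ with small parameter $h=\alpha^{-1/2}$. Since $\Omega$ is $C^{5}$-admissible it is in particular $C^{3}$-admissible, and the hypothesis that $K$ has a global maximum at $s_0$ means that $K$ attains its maximum, so Theorem~\ref{thm2} applies and gives
\[
E_j(Q^\Omega_\alpha)=-\alpha^2+E_j(-\Delta_S-\alpha K)+\cO(1),\qquad\alpha\to+\infty .
\]
It therefore remains to prove that $E_j(-\Delta_S-\alpha K)=-K_{\max}\,\alpha+e_j\,\alpha^{1/2}+\cO(\alpha^{1/4})$, with the remainder improved to $\cO(1)$ when $e_j$ is a simple element of $\cE$.

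To this end, set $h:=\alpha^{-1/2}$ and write $-\Delta_S-\alpha K=h^{-2}P(h)$ with $P(h):=-h^2\Delta_S-K$, so that $E_j(-\Delta_S-\alpha K)=h^{-2}E_j\big(P(h)\big)$. Here $P(h)$ is a semiclassical Schr\"odinger operator on the Riemannian manifold $S$ with potential $-K$, and by assumption $-K$ attains its global minimum $-K_{\max}$ at the single point $s_0$, with positive-definite Hessian there of eigenvalues $\mu_1,\dots,\mu_{\nu-1}$. If $S$ is non-compact, \eqref{E:H3} gives $\liminf_{s\to\infty}\big(-K(s)\big)=-K_\infty>-K_{\max}$, so $P(h)$ has no competing well near infinity and, by Persson's theorem, its lowest spectral values lie strictly below the essential spectrum and are genuine eigenvalues once $h$ is small. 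What the harmonic approximation should yield is
\[
E_j\big(P(h)\big)=-K_{\max}+e_j\,h+\cO(h^{3/2}),\qquad h\to0^+,
\]
with remainder $\cO(h^2)$ when $e_j$ is simple; multiplying by $h^{-2}=\alpha$ and substituting into the identity above gives the corollary, since $\alpha\,h^{3/2}=\alpha^{1/4}$ and $\alpha\,h^2=1$.

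To prove the displayed expansion I would follow the standard route (see the references in Section~\ref{S:semiclassical}, e.g.~\cite{DiSj99}). Agmon estimates --- using the unique non-degenerate well at $s_0$, the bounded curvatures provided by $C^{5}$-admissibility, and \eqref{E:H3} in the non-compact case --- show that the eigenfunctions of $P(h)$ belonging to the $j$ lowest eigenvalues concentrate exponentially near $s_0$; together with an IMS localization formula this reduces the computation of $E_j\big(P(h)\big)$, modulo exponentially small errors, to $P(h)$ on a fixed geodesic ball around $s_0$. In geodesic normal coordinates $x\in\RR^{\nu-1}$ centred at $s_0$ in which the Hessian of $-K$ is diagonal, after conjugating $P(h)$ to $L^2(dx)$ and applying the dilation $x=h^{1/2}y$, the operator $h^{-1}\big(P(h)+K_{\max}\big)$ takes the form $-\Delta_y+\tfrac12\sum_k\mu_k y_k^2+h^{1/2}W_3(y)+\cO(h)$, where $W_3$ is the homogeneous cubic polynomial given by the third-order Taylor coefficients of $-K$ at $s_0$ and the $\cO(h)$ term collects the quartic Taylor terms of $-K$ and the curvature corrections of the metric in normal coordinates (this is where $C^{5}$-regularity enters). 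The model operator $-\Delta_y+\tfrac12\sum_k\mu_k y_k^2$ has purely discrete spectrum whose part below any fixed level consists exactly of the elements of $\cE$; a Rayleigh--Ritz upper bound with rescaled Hermite trial functions, matched by the lower bound coming from the localization estimate, then gives $E_j\big(P(h)\big)=-K_{\max}+e_j h+\cO(h^{3/2})$. When $e_j$ is a simple element of $\cE$, its eigenspace in the model is spanned by a single product Hermite function $\psi_j$, and the first-order correction $h^{3/2}\langle\psi_j,W_3\psi_j\rangle$ vanishes since $W_3$ is odd and $|\psi_j|^2$ is even; the next correction is $\cO(h^2)$, which gives the improved remainder.

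The delicate point is the lower bound in the degenerate case: when $e_j$ has multiplicity larger than one in $\cE$, the perturbation $h^{1/2}W_3$ does not commute with the spectral projection of the model onto the $e_j$-eigenspace, and in general the compression of $W_3$ to that eigenspace is a nonzero (traceless) matrix whose eigenvalues produce genuine $\cO(h^{3/2})$ corrections to $E_j\big(P(h)\big)$; this is precisely what prevents an $\cO(1)$ remainder in general and restricts the improvement to simple $e_j$. Making the Agmon and localization estimates uniform in $h$ under only the $C^{5}$-admissibility hypothesis, and verifying that the error contributions from the curvature of $S$ and from the non-compact tail are of the claimed orders, is routine but requires some care.
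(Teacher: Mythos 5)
Your proposal is correct and follows essentially the paper's own route: the paper obtains the corollary by combining Theorem~\ref{thm2} (resp.\ Theorem~\ref{thm1}) with Proposition~\ref{P:nondegenerate}, the latter being precisely the harmonic-approximation statement for $-\Delta_S-\alpha K$ that you sketch, except that the paper simply quotes it from \cite{Si83,HS84} rather than re-deriving it via Agmon estimates, IMS localization and the rescaled oscillator model. The only caveat is that your improved $\cO(1)$ remainder uses quartic Taylor terms of $K$, i.e.\ effectively the $C^{6}$ regularity required in Proposition~\ref{P:nondegenerate} for that improvement, a point on which the corollary's stated $C^{5}$ hypothesis is already slightly loose in the paper itself.
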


\begin{cor} 
\label{C:degenerate}
Let $\Omega\subset \RR^{2}$ be a $C^{2p+3}$-admissible domain with some integer $p>1$, and
assume \eqref{E:H3} if $\partial\Omega$ is unbounded.
Assume that the curvature of the boundary admits a unique global maximum at $s_{0}$,
which is degenerated in the following sense: 
\[
K(s)=K(s_{0})-C_{p}(s-s_{0})^{2p}+\cO\big((s-s_{0})^{2p+1}\big), \quad C_{p}>0
\]
where $s$ denotes the arc length of the connected component $\Gamma$ of the boundary where $K$ is maximal, then
for each $j\in\NN$ there holds, as $\alpha\to+\infty$:
\[
E_{j}(-\Delta_{S}- \alpha K)=-K_{\max}\alpha+e_{j}\alpha^{\frac{1}{p+1}}+\cO\big(\alpha^{\frac{1}{2(p+1)}}\big),
\]
where $e_{j}$ is the $j^{\mbox{th}}$ eigenvalue of the operator $-\partial_{s}^2+C_{p}s^{2p}$
acting on $L^2(\RR)$.
If $\partial \Omega$ is $C^{2p+4}$ smooth, then the remainder can be replaced by $\cO(1)$.
 \end{cor}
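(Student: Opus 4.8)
The statement concerns only the one-dimensional effective operator $-\Delta_S-\alpha K$; no appeal to Theorems~\ref{thm1}--\ref{thm2} is needed, and the argument runs in parallel with the proof of Corollary~\ref{C:nondegenerate}, with the harmonic oscillator replaced by the anharmonic oscillator $-\partial_s^2+C_ps^{2p}$. First I would reduce to a neighbourhood of $s_0$. Writing $S=\bigsqcup_iS_i$ for the decomposition into connected components, one has $-\Delta_S-\alpha K=\bigoplus_i\big(-\partial_s^2-\alpha K|_{S_i}\big)$, each component parametrized by arc length. By (H2), the continuity of $K$, the uniqueness of the global maximum, and, in the non-compact case, (H3), there are $\varepsilon_0,c_0>0$ with $K(s)\le K_{\max}-c_0$ whenever $\mathrm{dist}_S(s,s_0)\ge\varepsilon_0$; hence on every component $\ne\Gamma$ and on $\{s\in\Gamma:|s-s_0|\ge\varepsilon_0\}$ one has $-\partial_s^2-\alpha K+\alpha K_{\max}\ge\alpha c_0$. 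A crude trial-function computation gives the a priori bound $E_j(-\Delta_S-\alpha K)+\alpha K_{\max}=\cO(\alpha^{1/(p+1)})=o(\alpha)$, so that, via an IMS localization formula relative to a partition of unity subordinate to $\{|s-s_0|<\varepsilon_0\}$ and its complement, everything outside a fixed neighbourhood of $s_0$ becomes negligible for the first $j$ eigenvalues; the problem reduces, up to a controlled error, to $H_\alpha:=-\partial_s^2+\alpha W$ on $L^2\big((s_0-\varepsilon_0,s_0+\varepsilon_0)\big)$, where $W:=K_{\max}-K\ge0$ satisfies $W(s)=C_p(s-s_0)^{2p}+\cO\big(|s-s_0|^{2p+1}\big)$.

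Next I would introduce the rescaling $s=s_0+\rho\sigma$ with $\rho:=\alpha^{-1/(2(p+1))}$, chosen so that $\alpha\rho^{2p+2}=1$; then $H_\alpha$ is unitarily equivalent to $\rho^{-2}\widetilde H_\alpha$ with
\[
\widetilde H_\alpha=-\partial_\sigma^2+V_\alpha(\sigma),\qquad V_\alpha(\sigma)=\alpha\rho^2\,W(s_0+\rho\sigma)=C_p\sigma^{2p}+\rho\,r_\alpha(\sigma),\qquad |r_\alpha(\sigma)|\le C|\sigma|^{2p+1},
\]
the last bound holding on the rescaled window $|\sigma|\le\varepsilon_0/\rho$, which exhausts $\RR$ as $\alpha\to+\infty$. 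The model operator $H_0:=-\partial_\sigma^2+C_p\sigma^{2p}$ on $L^2(\RR)$ has discrete simple spectrum $\{e_j\}_{j\in\NN}$, eigenfunctions $\psi_j$ of definite parity decaying super-exponentially ($|\psi_j(\sigma)|\le Ce^{-c|\sigma|^{p+1}}$), and by explicit scaling $E_j\big(-\partial_s^2+\alpha C_p(s-s_0)^{2p}\big)=\rho^{-2}e_j=\alpha^{1/(p+1)}e_j$. Since $|r_\alpha|\le C|\sigma|^{2p+1}$ uniformly in $\alpha$ on the scale $\sigma=\cO(1)$, the operator $\widetilde H_\alpha$ is a perturbation of $H_0$ of relative order $\rho$ on the concentration scale, so that $E_j(\widetilde H_\alpha)=e_j+\cO(\rho)$; undoing the scaling and the shift yields $E_j(-\Delta_S-\alpha K)=-K_{\max}\alpha+\alpha^{1/(p+1)}e_j+\cO\big(\alpha^{1/(2(p+1))}\big)$. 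The upper bound is the easy half (use the $j$-dimensional span of the cut-off rescaled functions $\chi\,\psi_k((\cdot-s_0)/\rho)$, $k\le j$, in the min-max principle); the lower bound is where the work lies, and I would obtain it through Agmon-type weighted estimates showing that the eigenfunctions of $H_\alpha$ are super-exponentially concentrated at scale $\cO\big(\rho(\log\alpha)^{1/(p+1)}\big)$ about $s_0$, which is precisely what legitimizes treating the unbounded remainder $\rho\,r_\alpha$ as a small perturbation of $H_0$.

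For the refined remainder $\cO(1)$ under the $C^{2p+4}$ assumption, the curvature gains one more derivative, so that $W(s)=C_p(s-s_0)^{2p}+a_{2p+1}(s-s_0)^{2p+1}+\cO\big(|s-s_0|^{2p+2}\big)$ and correspondingly $\widetilde H_\alpha-H_0=\rho\,V_1+\cO(\rho^2\sigma^{2p+2})$ with $V_1(\sigma)=a_{2p+1}\sigma^{2p+1}$ an \emph{odd} function. Rayleigh--Schr\"odinger perturbation theory then gives $E_j(\widetilde H_\alpha)=e_j+\rho\langle\psi_j,V_1\psi_j\rangle+\cO(\rho^2)$, and the first-order term \emph{vanishes by parity} ($|\psi_j|^2$ is even, $V_1$ is odd). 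Hence $E_j(\widetilde H_\alpha)=e_j+\cO(\rho^2)$ and, after rescaling, $E_j(-\Delta_S-\alpha K)=-K_{\max}\alpha+\alpha^{1/(p+1)}e_j+\cO(1)$. To make this rigorous I would again use the Agmon localization, additionally choosing the cut-off $\chi$ even about $s_0$ so as not to spoil the parity structure, and I would control the next-order unbounded terms ($\rho V_1$ and the $\cO(\rho^2\sigma^{2p+2})$ tail) against $H_0$ on the concentration region using the spectral gap at the simple eigenvalue $e_j$ and standard resolvent bounds.

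The main obstacle is the lower bound in the sharp regime: the sub-leading terms $\rho\,r_\alpha$ (and, in the refined case, $\rho V_1$ and the $\cO(\rho^2\sigma^{2p+2})$ tail) are genuinely unbounded and are \emph{not} relatively bounded with respect to $H_0$, because $|\sigma|^{2p+1}$ and $|\sigma|^{2p+2}$ are not dominated by $\sigma^{2p}$; consequently no purely form-theoretic bracketing can reach the announced orders, and one really needs the Agmon estimates (or an equivalent a priori localization of the eigenfunctions) \emph{before} perturbing, after which the simplicity of the $e_j$ and, in the $C^{2p+4}$ case, the parity cancellation close the estimates. Everything else --- the component decomposition, the use of (H3), the IMS formula, the exact scaling, and the one-dimensional spectral theory of the anharmonic oscillator --- is routine.
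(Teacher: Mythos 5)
Your strategy is, in substance, the one the paper uses: you correctly note that the printed statement concerns only the reduced operator (it coincides with Proposition~\ref{P:degenerate}, which the paper then combines with Theorem~\ref{thm1}); you localize to a neighbourhood of $s_0$ on $\Gamma$, where the arc-length parametrization makes the operator literally $-\partial_s^2-\alpha K(s)$; you then do a semiclassical analysis of the degenerate well with the exact scaling $\alpha\rho^{2p+2}=1$, $\rho=\alpha^{-1/(2(p+1))}$ (your bookkeeping of the orders, $\cO(\rho)$ giving $\cO(\alpha^{1/(2(p+1))})$ and $\cO(\rho^2)$ giving $\cO(1)$, is correct); and your parity cancellation of the first-order term coming from the odd coefficient $C_p'(s-s_0)^{2p+1}$, using the simplicity of the $e_j$ and the definite parity of the eigenfunctions of $-\partial_s^2+C_ps^{2p}$, is precisely the paper's argument for the improved remainder (for $p=1$ the paper points to \cite[Theorem 4.23]{DiSj99}). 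The one genuine difference is that the paper does not re-prove the degenerate-well expansion: after reducing to an interval $(s_0-\eta,s_0+\eta)$ with Dirichlet ends (errors being exponentially small, cf.\ \cite{HS84}), it invokes \cite[Theorem 2.1]{MarRou88} with $h=\alpha^{-1/2}$, which delivers the full expansion in powers of $\alpha^{-1/(2(p+1))}$ in one stroke, whereas you propose to derive it directly by rescaling and perturbation around the model oscillator.

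The price of your self-contained route is that the sketch stops short exactly where the paper leans on the citation. As you yourself point out, $\rho\,r_\alpha$ is not relatively form-bounded by $H_0$, and no single-scale bracketing reaches the claimed remainder: an IMS cut at scale $\alpha^{-\gamma}$ costs $\alpha^{2\gamma}$, which must be $\lesssim\alpha^{1/(2(p+1))}$, i.e.\ $\gamma\le 1/(4(p+1))$, while dominating $\alpha|s-s_0|^{2p+1}$ on the window by the announced remainder (or absorbing it as a small fraction of $\alpha C_p(s-s_0)^{2p}$ without shifting $e_j$ by more than $\cO(\alpha^{-1/(2(p+1))})$) forces $\gamma\ge 1/(2(p+1))$. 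So the Agmon-type concentration estimates you invoke are not an optional refinement but the entire analytic content of the lower bound (and of the rigorous second-order perturbation step in the $C^{2p+4}$ case); in your proposal they are announced rather than carried out. They are standard and are exactly what \cite{MarRou88} provides, so the plan is correct, but as written the proof is incomplete at that point; the shortest way to close it is to do what the paper does and quote that theorem, keeping your parity argument for the $\cO(1)$ statement.
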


Finally, in Section \ref{secper} we consider the case when $\Omega$ is periodic with a compact elementary cell.
In that case, the above main results show that the spectral bands of $Q^\Omega_\alpha$
are determined, up to a error term, by the spectral bands of the periodic operator $-\Delta_S-\alpha K$.
In particular, we prove some sufficient conditions guaranteeing the existence
of gaps in the spectrum of $Q^\Omega_\alpha$.

The machinery used for the proof of the main results is quite different from all the previous papers
on the Robin eigenvalues and is based on a detailed analysis of the quadratic form and appears to
be ideologically very close to the one for the Laplacians in thin domains, cf. \cite{FS1,FS2,kr09,kr}.
The reduced operator $-\Delta_S-\alpha K$ appeared
already in~\cite{kr} in the study of suitable Laplacians in thin neighbordnood of hypersurfaces,
and the results from Section \ref{S:semiclassical} provide improvements
of the asymptotics given in \cite[Theorem 1.1]{kr},
under the respective geometric assumptions.

\section{Auxiliary estimates}

We remark first that, as we deal with real-valued operators only,  we will
work everywhere with real Hilbert spaces. Let us prove some technical estimates  which will be used in the proof
of the main results.

\begin{lemma}\label{lemd}
For $\alpha>0$ and $\delta>0$, denote by $T^{D}$ the operator $f\mapsto -f''$
acting in $L^2(0,\delta)$ on the domain
\[
\cD(T^{D})=\big\{
f\in H^2(0,\delta): f'(0)=-\alpha f(0),\, f(\delta)=0
\}.
\]
Then, as $\delta\alpha$ tends to $+\infty$, the operator $T^{D}$
has a unique negative eigenvalue $E^{D}$, which satisfies
\begin{equation}
\label{E:estimeED}
E^{D}=-\alpha^2 + \cO(\alpha^2e^{-\delta\alpha}).
\end{equation}
Furthermore, if $\psi^{D}$ is an associated  normalized eigenfunction, then
\[
\psi^{D}(0)^2=2\alpha+ \cO(\alpha e^{-\delta\alpha}).
\]
\end{lemma}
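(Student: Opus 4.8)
I would treat $T^{D}$ as a regular Sturm--Liouville operator and reduce the search for negative eigenvalues to a scalar equation. For $E=-k^{2}$ with $k>0$, any solution of $-f''=Ef$ vanishing at $\delta$ is proportional to $f(x)=\sinh\big(k(\delta-x)\big)$, and the Robin condition $f'(0)=-\alpha f(0)$ becomes equivalent to
\[
k=\alpha\tanh(k\delta).
\]
To control the roots of this equation I would study $g(k):=\alpha\tanh(k\delta)-k$ on $[0,\infty)$: one has $g(0)=0$, $g'(0)=\alpha\delta-1$, $g(k)\to-\infty$, and $g$ is strictly concave on $(0,\infty)$ because $k\mapsto\tanh(k\delta)$ is. Hence, once $\alpha\delta>1$ (which holds for $\delta\alpha$ large), $g$ is positive on an interval $(0,k_{*})$ and negative on $(k_{*},\infty)$ for a unique $k_{*}>0$. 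A direct check shows that $E=0$ is not an eigenvalue when $\alpha\delta\ne1$; since for $E<0$ the solution space of $-f''=Ef$ meeting $f(\delta)=0$ is spanned by $\sinh\big(k(\delta-x)\big)$ and the remaining Robin condition holds only for $k=k_{*}$, the operator $T^{D}$ has exactly one negative eigenvalue $E^{D}=-k_{*}^{2}$.

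Next I would localise $k_{*}$. Evaluating $g$ at $\alpha/2$ gives $g(\alpha/2)=\alpha\big(\tanh(\alpha\delta/2)-\tfrac12\big)>0$ once $\alpha\delta>\ln 3$, so $\alpha/2<k_{*}<\alpha$ and in particular $2k_{*}\delta>\alpha\delta\to+\infty$. Combining $0<1-\tanh(k_{*}\delta)\le 2e^{-2k_{*}\delta}$ with the fixed-point equation yields $0<\alpha-k_{*}=\alpha\big(1-\tanh(k_{*}\delta)\big)\le 2\alpha e^{-2k_{*}\delta}\le 2\alpha e^{-\delta\alpha}$, i.e.\ $k_{*}=\alpha+\cO(\alpha e^{-\delta\alpha})$. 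Then $E^{D}=-k_{*}^{2}=-\alpha^{2}+(\alpha-k_{*})(\alpha+k_{*})=-\alpha^{2}+\cO(\alpha^{2}e^{-\delta\alpha})$, which is \eqref{E:estimeED}. (Once $k_{*}/\alpha\to1$ is known, one also gets $2k_{*}\delta\ge\tfrac43\alpha\delta$ for $\delta\alpha$ large, a refinement used below.)

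For the eigenfunction, I would normalise $\psi^{D}(x)=C\sinh\big(k_{*}(\delta-x)\big)$ via
\[
1=C^{2}\int_{0}^{\delta}\sinh^{2}\big(k_{*}(\delta-x)\big)\,dx=C^{2}\Big(\frac{\sinh(2k_{*}\delta)}{4k_{*}}-\frac{\delta}{2}\Big),
\]
whence, using $\sinh(2k_{*}\delta)=2\sinh(k_{*}\delta)\cosh(k_{*}\delta)$ and the eigenvalue relation $\tanh(k_{*}\delta)=k_{*}/\alpha$,
\[
\psi^{D}(0)^{2}=C^{2}\sinh^{2}(k_{*}\delta)=\frac{2k_{*}\tanh(k_{*}\delta)}{1-2k_{*}\delta/\sinh(2k_{*}\delta)}=\frac{2k_{*}^{2}/\alpha}{1-2k_{*}\delta/\sinh(2k_{*}\delta)}.
\]
By the previous step $2k_{*}^{2}/\alpha=2\alpha+\cO(\alpha e^{-\delta\alpha})$, while $2k_{*}\delta/\sinh(2k_{*}\delta)=\cO\big(k_{*}\delta\,e^{-2k_{*}\delta}\big)=\cO(e^{-\delta\alpha})$, the last equality using $2k_{*}\delta\ge\tfrac43\alpha\delta$ to beat the polynomial prefactor $k_{*}\delta$. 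Multiplying the two expansions gives $\psi^{D}(0)^{2}=2\alpha+\cO(\alpha e^{-\delta\alpha})$.

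The whole argument is elementary; the only place that needs attention is the exponent bookkeeping in the last two steps, where several correction terms come with prefactors that are polynomial in $\delta\alpha\to\infty$. These get absorbed into $\cO(e^{-\delta\alpha})$ thanks to the a priori bound $k_{*}\ge\alpha/2$, i.e.\ $2k_{*}\delta\ge\alpha\delta$ (upgraded to $2k_{*}\delta\ge\tfrac43\alpha\delta$ after one bootstrap step). Everything else is the routine analysis of the transcendental equation $k=\alpha\tanh(k\delta)$ and of elementary hyperbolic integrals.
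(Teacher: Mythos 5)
Your proof is correct and follows essentially the same route as the paper: reduce to the explicit hyperbolic eigenfunction and the transcendental equation $k=\alpha\tanh(k\delta)$, then compute $\psi^{D}(0)^2$ from the normalization constant, absorbing the polynomially-prefactored error terms into $\cO(\alpha e^{-\delta\alpha})$. The only difference is that the paper imports the uniqueness of the negative eigenvalue and the asymptotics $k=\alpha+\cO(\alpha e^{-\delta\alpha})$ from Lemma~A.2 of \cite{HP}, whereas you establish these self-containedly via the concavity of $k\mapsto\alpha\tanh(k\delta)-k$ and the a priori bound $k_{*}\ge\alpha/2$ with a bootstrap, which is a valid (and slightly more complete) version of the same argument.
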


\begin{proof}
The assertion was partially proven in Lemma~A.2 of \cite{HP} by direct computations: it was shown
that the operator $T^{D}$ has a unique negative eigenvalue, that
$E^{D}=-k^2$ with $k=\alpha+\cO(\alpha e^{-\delta\alpha})$,
and, finally, that $\psi^{D}(t)=C (e^{k(t-\delta)}-e^{-k(t-\delta)})$, where
$C$ is a normalizing constant. We have then
\[
1=\|\psi^{D}\|^2_{L^2(0,\delta)}=C^2\Big(
\dfrac{e^{2\delta k}-e^{-2\delta k}}{2k}-2\delta
\Big),
\quad
C^2=\dfrac{2ke^{-2\delta k}}{1-4\delta k e^{-2\delta k}-e^{-4\delta k}},
\]
which gives
\[
\psi^{D}(0)^2=2k \dfrac{(1-e^{-2\delta k})^2}{1-4\delta k e^{-2\delta k}-e^{-4\delta k}}
=2k+ \cO(\delta k^2 e^{-2\delta k})=2\alpha+\cO(\alpha e^{-\delta \alpha}). \qedhere
\]
\end{proof}

\begin{lemma}\label{lemn}
Let $\beta\geq0$ be fixed. For $\alpha>0$ and $\delta>0$, denote by $T^{\beta}$ the operator $f\mapsto -f''$
acting in $L^2(0,\delta)$ on the domain
\[
\cD(T^{\beta})=\big\{
f\in H^2(0,\delta): f'(0)=-\alpha f(0),\, f'(\delta)=\beta f(\delta)
\}.
\]
Then, as $\delta\alpha$ tends to $+\infty$, the operator $T^{\beta}$ has a unique negative eigenvalue $E^{\beta}$, which satisfies
\begin{equation}
\label{E:asymptoEbeta}
E^{\beta}=-\alpha^2 + \cO(\alpha^2e^{-\delta\alpha}).
\end{equation}
Furthermore, if $\psi^{\beta}$ is an associated normalized eigenfunction,
then
\begin{align}
        \label{eq-fn1}
\psi^{\beta}(0)^2&=2\alpha+ \cO(\alpha e^{-\delta\alpha}),\\
        \label{eq-fn1d}
\psi^{\beta}(\delta)^2&=4\alpha e^{-2\delta\alpha}+ \cO(\alpha e^{-3\delta\alpha}),\\
        \label{eq-fn2}
 \|(\psi^{\beta})'\|^2_{L^2(0,\delta)}&=\alpha^2+ \cO(\alpha^2 e^{-\delta\alpha}).
\end{align}
In addition,
 \begin{equation}
       \label{eq-fpsi}
       \|f'\|^2_{L^2(0,\delta)}-\alpha f(0)^2-\beta  f(\delta)^2\ge 0
      \text{  for any $f\in H^1(0,\delta)$ with $\langle f ,\psi^{\beta}\rangle_{L^2(0,\delta)}=0$.}
 \end{equation}
\end{lemma}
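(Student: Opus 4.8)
The plan is to follow the same strategy as in Lemma~\ref{lemd}: solve the eigenvalue equation explicitly, exploit the exponential decay coming from $\delta\alpha\to+\infty$, and then read off all the quantities. For a negative eigenvalue $E^\beta=-k^2$ with $k>0$, the general solution of $-f''=-k^2 f$ is $f(t)=A e^{kt}+B e^{-kt}$; imposing $f'(0)=-\alpha f(0)$ gives $k(A-B)=-\alpha(A+B)$, i.e. $B/A=(k-\alpha)/(k+\alpha)$ (up to the degenerate case $k=\alpha$, which one checks separately and which does not satisfy the second boundary condition for $\delta\alpha$ large). Plugging this into the Robin condition at $t=\delta$, namely $f'(\delta)=\beta f(\delta)$, yields a transcendental equation for $k$ of the form $e^{-2\delta k}=g(k)$ with $g(k)=\dfrac{(k-\alpha)(k-\beta)}{(k+\alpha)(k+\beta)}$ (the sign convention to be fixed carefully). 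Since the right-hand side vanishes to first order at $k=\alpha$ and $e^{-2\delta k}$ is exponentially small there, a fixed-point / implicit-function argument in a neighbourhood of $k=\alpha$ shows there is exactly one solution $k=\alpha+\cO(\alpha e^{-2\delta\alpha})$; more precisely $k-\alpha$ is comparable to $\alpha e^{-2\delta\alpha}$, which already gives \eqref{E:asymptoEbeta} after squaring. Uniqueness of the \emph{negative} eigenvalue follows because for $k$ away from $\alpha$ one has $|g(k)|$ bounded below while $e^{-2\delta k}$ is forced to be small, so no other root exists; alternatively one invokes a Sturm-oscillation or monotonicity argument on the form.

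Next I would compute the eigenfunction data. Writing $\psi^\beta(t)=A\big(e^{kt}+\tfrac{k-\alpha}{k+\alpha}e^{-kt}\big)$ and normalizing, the $L^2$-norm is dominated by the $e^{kt}$ part near $t=\delta$, so $\|\psi^\beta\|^2 = A^2\big(\tfrac{e^{2\delta k}}{2k}+\cO(1)\big)$, fixing $A^2 = 2k e^{-2\delta k}\big(1+\cO(e^{-2\delta k})+\cO(k e^{-2\delta k})\big)$ — the same structure as in Lemma~\ref{lemd}. Then $\psi^\beta(0)^2 = A^2\big(1+\tfrac{k-\alpha}{k+\alpha}\big)^2 = A^2\big(\tfrac{2k}{k+\alpha}\big)^2$; since $k=\alpha+\cO(\alpha e^{-2\delta\alpha})$ this is $A^2(1+\cO(e^{-2\delta\alpha}))$, and combined with $A^2 e^{2\delta k}=2k+\cO(\dots)$ and $e^{2\delta k}=e^{2\delta\alpha}(1+\cO(\delta\alpha e^{-2\delta\alpha}))$ we get \eqref{eq-fn1}. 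For \eqref{eq-fn1d}, $\psi^\beta(\delta)^2 = A^2\big(e^{k\delta}+\tfrac{k-\alpha}{k+\alpha}e^{-k\delta}\big)^2 = A^2 e^{2k\delta}(1+\cO(e^{-2\delta\alpha}))$; using $A^2 e^{2\delta k}=2k+\cO(k e^{-2\delta k})$ gives $\psi^\beta(\delta)^2 = 2k\, e^{-2k\delta}\cdot\text{(lower order)}$... here one must be a bit careful: $\psi^\beta(\delta)^2 = A^2 e^{2k\delta}(1+\dots)$ is not small, so instead I factor $A^2 = 2k e^{-2\delta k}(1+\dots)$ directly into $\psi^\beta(\delta)^2 = A^2 e^{2k\delta}(\dots) $ — wait, that gives $2k(1+\dots)$, not $4\alpha e^{-2\delta\alpha}$. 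The correct expansion keeps the \emph{second} term: $\psi^\beta(\delta) = A e^{k\delta}\big(1+\tfrac{k-\alpha}{k+\alpha}e^{-2k\delta}\big)$ and one must use that $A e^{k\delta}$ is itself of order $\sqrt{\alpha}$ only after the normalization is inserted — so $\psi^\beta(\delta)^2 = A^2 e^{2k\delta} + 2A^2\tfrac{k-\alpha}{k+\alpha} + \cO(A^2 e^{-2k\delta})$; the leading term $A^2 e^{2k\delta}=2k+\cO(\cdot)$ is $\cO(\alpha)$, not $\cO(\alpha e^{-2\delta\alpha})$. Re-examining: the eigenfunction must \emph{vanish-like} at $\delta$, which forces $k-\alpha$ to be exponentially small \emph{with the right constant}, namely the transcendental equation gives $e^{-2\delta k}\approx \tfrac{k-\alpha}{2\alpha}\cdot\tfrac{\beta-\alpha}{\beta+\alpha}$ type relation, so $\psi^\beta(\delta)$ is governed by $Ae^{k\delta}$ times the boundary combination $\big(1 - \tfrac{\beta}{k}\big)^{-1}(\dots)$; the clean way is to use the boundary condition $\psi'(\delta)=\beta\psi(\delta)$ together with $\psi(\delta)=Ae^{k\delta}(1+r)$, $r:=\tfrac{k-\alpha}{k+\alpha}e^{-2k\delta}$, and solve, which yields $\psi^\beta(\delta)^2 = 4\alpha e^{-2\delta\alpha}+\cO(\alpha e^{-3\delta\alpha})$ after the dust settles. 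Finally \eqref{eq-fn2} follows from $\|(\psi^\beta)'\|^2 = k^2\|\psi^\beta\|^2 + \text{boundary terms from integration by parts} = k^2 + \cO(\alpha^2 e^{-\delta\alpha})$, or directly from $\int_0^\delta |(\psi^\beta)'|^2 = -\int \psi^\beta (\psi^\beta)'' + [\psi^\beta(\psi^\beta)']_0^\delta = k^2 + (\beta\psi^\beta(\delta)^2+\alpha\psi^\beta(0)^2) = k^2 + \cO(\alpha^2 e^{-\delta\alpha})$; wait, $\alpha\psi^\beta(0)^2\sim 2\alpha^2$ is not a remainder — so instead one uses $\|(\psi^\beta)'\|^2 = E^\beta\|\psi^\beta\|^2 + \alpha\psi^\beta(0)^2 + \beta\psi^\beta(\delta)^2 = -k^2 + 2\alpha^2 + \cO(\alpha^2 e^{-\delta\alpha}) = \alpha^2 + \cO(\alpha^2 e^{-\delta\alpha})$, using $k^2=\alpha^2+\cO(\alpha^2 e^{-2\delta\alpha})$ and $\psi^\beta(0)^2=2\alpha+\cO(\alpha e^{-\delta\alpha})$. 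That closes \eqref{eq-fn2}.

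For the last claim \eqref{eq-fpsi}, the quadratic form $\mathfrak{a}[f]=\|f'\|^2-\alpha f(0)^2-\beta f(\delta)^2$ on $H^1(0,\delta)$ is exactly the form of $T^\beta$, whose spectrum consists of the single negative eigenvalue $E^\beta$ (for $\delta\alpha$ large) and a nonnegative rest. Hence on the orthogonal complement of the ground state $\psi^\beta$ the form is bounded below by $E_2(T^\beta)\ge 0$, which is precisely \eqref{eq-fpsi}. One should note here that $\psi^\beta\in H^1$ and the orthogonality is in $L^2(0,\delta)$, so this is an immediate consequence of the spectral theorem / min-max once the ``unique negative eigenvalue'' statement is in hand.

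\emph{Main obstacle.} The genuinely delicate point is the precise asymptotics of $\psi^\beta(\delta)^2$ in \eqref{eq-fn1d}: unlike $\psi^\beta(0)^2$, which only needs $k\approx\alpha$ and the normalization constant to leading order, the value at $t=\delta$ is exponentially small and requires tracking the subleading structure of both the transcendental equation for $k$ and the two-term form of the eigenfunction, making sure that the $\cO(\alpha e^{-3\delta\alpha})$ error is honest and uniform as $\delta\alpha\to+\infty$ (in particular without assuming $\delta$ bounded or $\alpha$ bounded separately). Everything else is a routine, if somewhat lengthy, repetition of the computation in Lemma~\ref{lemd} with the Dirichlet condition at $\delta$ replaced by the Robin condition $f'(\delta)=\beta f(\delta)$, for which the algebra is entirely parallel.
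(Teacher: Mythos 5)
Your overall route is the same as the paper's: solve $-f''=-k^2f$ explicitly, turn the two Robin conditions into a transcendental equation with a unique root $k=\alpha+\cO(\alpha e^{-\delta\alpha})$, read off \eqref{E:asymptoEbeta}, get \eqref{eq-fpsi} from the spectral theorem once the negative eigenvalue is known to be unique, and get \eqref{eq-fn2} from the identity $\|(\psi^\beta)'\|^2=E^\beta+\alpha\psi^\beta(0)^2+\beta\psi^\beta(\delta)^2$; all of that matches the paper. The genuine gap is in the eigenfunction computation. Imposing $f'(0)=-\alpha f(0)$ on $f=Ae^{kt}+Be^{-kt}$ gives $k(A-B)=-\alpha(A+B)$, i.e. $B/A=(k+\alpha)/(k-\alpha)$, not $(k-\alpha)/(k+\alpha)$: the exponentially large coefficient must multiply $e^{-kt}$, since the ground state is localized at $t=0$ (consistent with $\psi^\beta(0)^2\approx2\alpha$ and $\psi^\beta(\delta)^2$ exponentially small). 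With the inverted ratio your ansatz is concentrated at the wrong end, which is why your own computation of $\psi^\beta(0)^2$ is incoherent (you obtain $\psi^\beta(0)^2=A^2(1+o(1))$ with $A^2\approx 2ke^{-2\delta k}$, which is exponentially small, yet you claim \eqref{eq-fn1}) and why you run into ``that gives $2k$, not $4\alpha e^{-2\delta\alpha}$'' when attempting \eqref{eq-fn1d}. You never resolve this: the estimate \eqref{eq-fn1d}, which you yourself flag as the delicate point, is in the end only asserted (``after the dust settles''), not proved. The clean fix is the paper's parametrization $\psi^\beta(t)=C\big((1+\beta/k)e^{k(t-\delta)}+(1-\beta/k)e^{-k(t-\delta)}\big)$, which builds in the condition at $t=\delta$ and reduces the normalization and the boundary values to a verbatim repetition of Lemma~\ref{lemd}.

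Two further points. First, your uniqueness argument is too quick: $g(k)=\frac{(k-\alpha)(k-\beta)}{(k+\alpha)(k+\beta)}$ also vanishes at $k=\beta$, so ``$|g(k)|$ bounded below away from $k=\alpha$'' is false and a possible root near $k=\beta$ must be excluded; the paper avoids this by rewriting the equation as $\delta k\,\frac{\sinh(\delta k)-(\beta/k)\cosh(\delta k)}{\cosh(\delta k)-(\beta/k)\sinh(\delta k)}=\delta\alpha$ and invoking the monotonicity argument of the $\beta=0$ case from \cite{HP}. Second, had you carried the computation of $\psi^\beta(\delta)^2$ through with the correct eigenfunction, you would have found the leading term $8\alpha e^{-2\delta\alpha}$ rather than $4\alpha e^{-2\delta\alpha}$: for $\beta=0$ one has $\psi^\beta(t)=a\cosh(k(\delta-t))$ with $1=a^2\big(\tfrac\delta2+\tfrac{\sinh(2k\delta)}{4k}\big)$, hence $\psi^\beta(\delta)^2=a^2=8ke^{-2k\delta}(1+o(1))$ while $\psi^\beta(0)^2\approx 2k$, and for fixed $\beta>0$ the same leading constant appears up to a relative error $\cO(1/\alpha)$. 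Only the order $\cO(\alpha e^{-2\delta\alpha})$ of $\psi^\beta(\delta)^2$ is used later in the paper, so this discrepancy with the stated constant is harmless there, but it shows that the one step you skipped is precisely the one that cannot be waved through and must be computed explicitly.
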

\begin{proof}
Once again $E^{\beta}$ is clearly negative, and we denote by $k$ the positive number such that $E^{\beta}=-k^2$, so that 
\[
\psi^{\beta}(t)=C\bigg(\Big(1+\dfrac{\beta}{k}\Big)e^{k(t-\delta)}+\Big(1-\dfrac{\beta}{k}\Big)e^{-k(t-\delta)}\bigg),
\]
with $C$ a normalizing constant. Then the condition $\psi'(0)=-\alpha\psi(0)$ is equivalent to 
$$ \delta k\frac{\sinh(\delta k)-\dfrac{\beta}{k}\cosh(\delta k)}{\cosh(\delta k)-\dfrac{\beta}{k}\sinh(\delta k)}=\delta\alpha.$$
Following literally the proof of \cite[Lemma A.1]{HP} treating the case $\beta=0$, we get the existence of a unique solution as $\alpha$ gets large, which satisfies $k=\alpha+\cO(\alpha e^{-\delta\alpha})$, which gives
the asymptotics of $E^{\beta}=-k^2$. Moreover, the other eigenvalues of $T^{\beta}$ are positive, and since the quadratic form for $T^{\beta}$ is
\[
t^{\beta}(f,f)=\|f'\|^2_{L^2(0,\delta)}-\alpha f(0)^2-\beta f(\delta)^2, \quad \cD(t^{\beta})=H^1(0,\delta),
\]
the assertion \eqref{eq-fpsi} follows from the spectral theorem for self-adjoint operators.
Then \eqref{eq-fn1} and \eqref{eq-fn1d} are
obtained as in the the proof of Lemma~\ref{lemd}.
Finally, substituting this estimate into the equality $t^{\beta}(\psi,\psi)=E^{\beta}$ we obtain~\eqref{eq-fn2}.
\end{proof}

Finally, we will need a suitable form of the Sobolev inequality, see e.g.
Lemma~8 in~\cite{kuch}:
\begin{lemma}\label{lemsob}
For any $0<\ell\le a$ and $f\in H^1(0,a)$ there holds, with $\xi\in\{0,a\}$,
\[
f(\xi)^2\le \ell \int_0^a f'(t)^2dt+ \dfrac{2}{\ell}\int_0^a f(t)^2dt.
\]
\end{lemma}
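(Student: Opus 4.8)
The plan is to prove the inequality by the standard fundamental-theorem-of-calculus argument, exploiting the freedom to choose a point at which $f$ is controlled by its $L^2$-norm. First I would treat the case $\xi=0$; the case $\xi=a$ is symmetric after the change of variables $t\mapsto a-t$. Pick any point $y\in[0,\ell]$. By the fundamental theorem of calculus, for $t\in[0,\ell]$ one has
\[
f(0)^2=f(y)^2-\int_0^y\frac{\dd}{\dd t}f(t)^2\,\dd t
      =f(y)^2-2\int_0^y f(t)f'(t)\,\dd t,
\]
so that
\[
f(0)^2\le f(y)^2+2\int_0^\ell |f(t)|\,|f'(t)|\,\dd t
      \le f(y)^2+\int_0^\ell f(t)^2\,\dd t+\int_0^\ell f'(t)^2\,\dd t,
\]
using $2|f||f'|\le f^2+(f')^2$ and extending the integrals to $[0,\ell]$.

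Next I would average the bound $f(0)^2\le f(y)^2+\int_0^\ell f^2+\int_0^\ell (f')^2$ over $y\in[0,\ell]$ with respect to $\frac{1}{\ell}\dd y$, which eliminates the pointwise term $f(y)^2$ in favour of $\frac1\ell\int_0^\ell f(y)^2\,\dd y$:
\[
f(0)^2\le \frac{1}{\ell}\int_0^\ell f(y)^2\,\dd y+\int_0^\ell f(t)^2\,\dd t+\int_0^\ell f'(t)^2\,\dd t.
\]
Since $\ell\le a$ and $\ell\le 1$ would be needed to combine cleanly, I instead bound crudely: $\int_0^\ell f^2\le\int_0^a f^2$ and, when $\ell\le a$, also $\frac1\ell\int_0^\ell f^2\le\frac1\ell\int_0^a f^2$, while $\int_0^\ell (f')^2\le\int_0^a(f')^2$. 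Hence
\[
f(0)^2\le \ell\int_0^a f'(t)^2\,\dd t+\Big(\frac{1}{\ell}+1\Big)\int_0^a f(t)^2\,\dd t.
\]
To reach exactly the stated constants one refines the first step: instead of bounding $2|f||f'|$ by $f^2+(f')^2$, use Young's inequality $2|f||f'|\le \ell (f')^2+\frac1\ell f^2$ on $[0,\ell]$, which gives $f(0)^2\le f(y)^2+\ell\int_0^\ell (f')^2+\frac1\ell\int_0^\ell f^2$; averaging over $y$ then yields the factor $\frac2\ell$ on the $L^2$-term and $\ell$ on the $H^1$-term, after enlarging the domains of integration to $[0,a]$. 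A density argument reduces the general $f\in H^1(0,a)$ to smooth $f$, for which the computations above are literal.

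The argument is entirely elementary; the only point requiring a little care—and hence the "main obstacle," such as it is—is getting the constants to match the statement exactly, namely producing the factor $\ell$ in front of $\int_0^a (f')^2$ and $\frac2\ell$ in front of $\int_0^a f^2$ rather than the looser constants the crudest estimate gives. This is handled by using the $\ell$-weighted Young inequality at the start rather than the symmetric one, together with the averaging over the base point $y$. Since this is a known estimate (the excerpt cites \cite[Lemma~8]{kuch}), one may alternatively simply invoke the reference.
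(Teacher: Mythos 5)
Your argument is correct: for $y\in[0,\ell]$ the identity $f(\xi)^2=f(y)^2-2\int$ (after the reflection $t\mapsto a-t$ for $\xi=a$), the $\ell$-weighted Young inequality $2|f||f'|\le\ell(f')^2+\tfrac1\ell f^2$, and averaging over $y\in[0,\ell]$ give exactly the stated constants $\ell$ and $\tfrac2\ell$, and the detour through the cruder symmetric Young bound is harmless. The paper itself offers no proof — it simply invokes \cite[Lemma~8]{kuch} — and your elementary argument is essentially the standard one behind that reference, so there is nothing to compare beyond noting that you have supplied the proof the paper delegates.
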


\section{Reduction to the analysis near the boundary}\label{sec1}

\subsection{Dirichlet-Neumann bracketing}\label{dtnbra}
The first steps of the analysis are essentially the same as in \cite{pp}. For $\delta>0$ denote
\[
\Omega_\delta:=\big\{
x\in \Omega: \, \inf_{s\in S} |x-s|<\delta
\},
\quad
\Theta_\delta:=\Omega\setminus \overline{\Omega_\delta},
\]
and let $q^{\Omega,N,\delta}_\alpha$ and $q^{\Omega,D,\delta}_\alpha$
be the quadratic forms given by the same expression as $q^\Omega_\alpha$
but acting on the domains
\begin{gather*}
\cD(q^{\Omega,N,\delta}_\alpha)=H^1(\Omega_\delta)\oplus H^1(\Theta_\delta),
\quad
\cD(q^{\Omega,D,\delta}_\alpha)=\Tilde H^1_0(\Omega_\delta)\oplus H^1_0(\Theta_\delta),\\
\widetilde H^1_0(\Omega_\delta):=\{f\in H^1(\Omega_\delta): \, f=0 \text{ at } \partial\Omega_\delta\setminus S \},
\end{gather*}
and denote by $Q^{\Omega,N,\delta}_\alpha$ and $Q^{\Omega,D,\delta}_\alpha$
the associated self-adjoint operators in $L^2(\Omega)$.
The inclusions $\cD(q^{\Omega,D,\delta}_\alpha)\subset\cD(q^{\Omega}_\alpha)\subset\cD(q^{\Omega,N,\delta}_\alpha)$
and the min-max principle imply, for each $j\in\NN$, the inequalities
\[
E_j(Q^{\Omega,N,\delta}_\alpha)\le E_j(Q^\Omega_\alpha)\le E_j(Q^{\Omega,D,\delta}_\alpha).
\]
Furthermore, $Q^{\Omega,N,\delta}_\alpha=B^{\Omega,N,\delta}_\alpha \oplus
(-\Delta)^N_{\Theta_\delta}$ and $Q^{\Omega,D,\delta}_\alpha=B^{\Omega,D,\delta}_\alpha \oplus
(-\Delta)^D_{\Theta_\delta}$,
where $B^{\Omega,N,\delta}_\alpha$ and $B^{\Omega,D,\delta}_\alpha$
are the self-adjoint operators in $L^2(\Omega_\delta)$ associated respectively with the quadratic forms
\begin{gather*}
b^{\Omega,\star,\delta}_\alpha(u,u)=\int_{\Omega_\delta} |\nabla u|^2dx -\alpha \int_S u^2d S,
\quad \star\in\{N,D\},\\
 \cD(b^{\Omega,N,\delta}_\alpha)=H^1(\Omega_\delta),\quad
\cD(b^{\Omega,D,\delta}_\alpha)=\widetilde H^1_0(\Omega_\delta),
\end{gather*}
and  $(-\Delta)^N_{\Theta_\delta}$ and $(-\Delta)^D_{\Theta_\delta}$
denote the Neumann and the Dirichlet Laplacian in $\Theta_\delta$, respectively.
As both Neumann and Dirichlet Laplacians are non-negative, we have the inequalities
\begin{equation}
       \label{eq-est1}
E_j(B^{\Omega,N,\delta}_\alpha)\le E_j(Q^\Omega_\alpha)\le E_j(B^{\Omega,D,\delta}_\alpha)
\text{ for all $j$ with } E_j(B^{\Omega,D,\delta}_\alpha)<0.
\end{equation}
The preceding inequalities are valid for any value of $\delta>0$, but for the rest of the paper we assume that
$\delta$ depends on $\alpha$ in a special way:
\begin{equation}
        \label{eq-dada}
\text{ the value of $\delta$ tends to $0$ and the value of $\delta\alpha$ tends to $+\infty$ as $\alpha$ tends to $+\infty$,}
\end{equation}
and the  precise dependence will be chosen later.

\subsection{Change of variables}
\label{S:CV}
In order to study the eigenvalues of the operators $B^{\Omega,N,\delta}_\alpha$
and $B^{\Omega,D,\delta}_\alpha$ we proceed first with a change of variables in $\Omega_\delta$
with small $\delta$. The computations below are very similar to those
performed in~\cite{CarExKr04} for a different problem.

By assumption, for $\delta>0$ sufficiently small,
 the map $\Phi$ defined in \eqref{D:TubNei}
is a diffeomorphism between $\Sigma$ and $\Omega_{\delta}$. 
The metric $G$ on  $\Sigma$ induced by this embedding is
\begin{equation}
      \label{eq-gg}
G=g\circ (I_s-tL_s)^2 + \dd t^2,
\end{equation}
where $I_s:T_s S\to T_s S$ is the identity map,
and $g$ is the metric on $S$ induced by the embedding in $\RR^\nu$.
The associated volume form $\dd\Sigma$ on $\Sigma$ is
\begin{equation}
      \label{E:metrics}
\dd\Sigma =|\det G|^{1/2}\dd s\, \dd t=\varphi(s,t)|\det g|^{1/2} \dd s \,\dd t=\varphi \, \dd S \,\dd t,
\end{equation}
where
\[
\dd S=|\det g|^{1/2}\dd s
\]
is the induced $(\nu-1)$-dimensional volume form on $S$, and the weight $\varphi$ is given by
\begin{equation}
      \label{eq-r1}
\varphi(s,t):=\big|\det (I_s-t L_s)\big|=1- t \mathop{\mathrm{tr}}L_s +p(s,t)t^2\equiv
1-K(s) t +p(s,t)t^2,
\end{equation}
with $p$ being a polynomial in $t$ with coefficients which are bounded and continuous in $s$.  In particular,
\begin{equation}
\label{E:DLtvarphi}
|\varphi(s,t)-1| \leq \|\partial_{t}\varphi\|_{\infty}\delta \text{ for all } (s,t)\in \Sigma.
\end{equation}
Let us recall that for a function $f: S \mapsto \RR$, the boundedness of the gradient $\nabla_{s}f$, as stated in  Definition \ref{D:admissible},
is understood for the norm on the tangent spaces: $ \|\nabla_{s}f\|_{\infty}=\sup_{s\in S}\| \nabla_{s}f(s)\|_{T_{s}S}$, with
\begin{align*}
 \| \nabla_{s}f(s)\|_{T_{s}S}^2 &= \sum_{\rho,\mu}g_{\rho\mu}(s) \left(\sum_{k} g^{\rho k}(s)\partial_{k}f(s) \right)\left(\sum_{\ell} g^{\mu\ell}(s)\partial_{\ell}f(s) \right)
 \\
 &=\sum_{\rho,\mu}g^{\rho\mu}(s) \partial_{\rho}f(s)\partial_{\mu}f(s)
 \quad \mbox{with} \ (g^{\rho \mu})=g^{-1}.
  \end{align*}
For future uses, we summarize some obvious properties of $\varphi$:
\begin{lemma}
\label{Linter}
Let $\Omega$ be a $C^{2}$-admissible domain, the for $\delta$ small,
the functions $L_{s}$, $K$ are bounded on $S$, and the functions $\partial_{t}\varphi$, $\partial_{t}^2\varphi$, $\partial_{t}\varphi^{-1/2}$, $(\partial_{t}\varphi^{-1/2})\varphi^{1/2}$, and
$\partial_{t}\big((\partial_{t}\varphi^{-1/2})\varphi^{1/2}\big)$ are bounded on $\Sigma$.
If, in addition, $\Omega$ is $C^{3}$-admissible, then $\nabla_{s}\partial_{t}(\varphi^{-1/2})$ is bounded on $\Sigma$.
\end{lemma}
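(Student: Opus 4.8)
\textbf{Proof proposal for Lemma~\ref{Linter}.}

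The plan is to read off every claimed bound directly from the defining formula \eqref{eq-r1}, namely $\varphi(s,t)=\det(I_s-tL_s)$, combined with hypothesis (H2) of Definition~\ref{D:admissible}. First I would observe that $\varphi$ is a polynomial in $t$ whose coefficients are universal polynomial expressions in the entries of the shape operator $L_s$; since $k\ge 2$ gives $\kappa_i\in L^\infty(S)$, all these coefficients are bounded on $S$, uniformly in $t\in(0,\delta)$ for $\delta$ fixed. This immediately yields boundedness of $L_s$ and $K=\operatorname{tr}L_s$, and, differentiating the polynomial in $t$, of $\partial_t\varphi$ and $\partial_t^2\varphi$ on $\Sigma$. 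Moreover, shrinking $\delta$ if necessary, \eqref{E:DLtvarphi} shows $\varphi$ stays in a compact subinterval of $(0,\infty)$, say $\tfrac12\le\varphi\le 2$, so $\varphi$ is bounded away from $0$ on $\Sigma$.

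Next I would handle the quantities involving $\varphi^{-1/2}$. Since $\varphi$ is smooth in $t$, bounded, and bounded away from zero, the chain rule gives $\partial_t(\varphi^{-1/2})=-\tfrac12\varphi^{-3/2}\partial_t\varphi$, which is a product of bounded factors, hence bounded on $\Sigma$. Likewise $(\partial_t\varphi^{-1/2})\varphi^{1/2}=-\tfrac12\varphi^{-1}\partial_t\varphi$ is bounded, and differentiating once more in $t$,
\[
\partial_t\bigl((\partial_t\varphi^{-1/2})\varphi^{1/2}\bigr)
=-\tfrac12\bigl(\varphi^{-1}\partial_t^2\varphi-\varphi^{-2}(\partial_t\varphi)^2\bigr),
\]
again a combination of already-bounded quantities. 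So all five functions listed in the first part are bounded whenever $\Omega$ is $C^2$-admissible.

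For the last assertion, suppose in addition $\Omega$ is $C^3$-admissible, so by (H2) the gradients $\nabla_s\kappa_i$ are bounded, whence $\nabla_s\varphi$ and $\nabla_s(\partial_t\varphi)$ are bounded on $\Sigma$ (again because $\varphi$ depends polynomially on $t$ with coefficients built from $L_s$, now differentiable with bounded gradient in $s$). Then
\[
\nabla_s\partial_t(\varphi^{-1/2})
=\nabla_s\bigl(-\tfrac12\varphi^{-3/2}\partial_t\varphi\bigr)
=\tfrac34\varphi^{-5/2}(\nabla_s\varphi)\,\partial_t\varphi-\tfrac12\varphi^{-3/2}\nabla_s(\partial_t\varphi),
\]
a sum of products of bounded functions, hence bounded on $\Sigma$. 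There is no real obstacle here; the only point requiring a little care is to fix $\delta$ small enough at the outset so that $\varphi$ is uniformly bounded below, which legitimises all the negative powers of $\varphi$ appearing above, and to note that the relevant metric-dependent norm $\|\nabla_s\cdot\|_{T_sS}$ is the one made explicit just before the statement.
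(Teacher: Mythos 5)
Your proof is correct and follows exactly the route the paper intends: the authors state Lemma~\ref{Linter} without proof as an "obvious" consequence of the formula \eqref{eq-r1} for $\varphi=\det(I_s-tL_s)$ (a polynomial in $t$ with coefficients controlled by (H2)), a uniform lower bound on $\varphi$ for small $\delta$, and the chain rule, which is precisely what you spell out. Your explicit formulas for $\partial_t(\varphi^{-1/2})$, $(\partial_t\varphi^{-1/2})\varphi^{1/2}$, its $t$-derivative, and $\nabla_s\partial_t(\varphi^{-1/2})$ are all correct, and your care in fixing $\delta$ so that $\varphi$ stays bounded away from zero is exactly the needed point.
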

In particular,  for some $C>0$ we have
\begin{equation}
\label{E:UBps}
\Big|\sum_{\rho,\mu}g^{\rho\mu}(s) \partial_{\rho}\partial_{t}(\varphi^{-1/2})(s,t)\partial_{\mu}\partial_{t}(\varphi^{-1/2})
\Big|\le C, \quad (s,t)\in\Sigma.
\end{equation}

Now consider the unitary map
\[
U: L^2(\Omega_\delta)\to L^2(\Sigma,\dd\Sigma),
\quad
Uf=f\circ \Phi,
\]
where $\Phi$ is the map from \eqref{D:TubNei},
and the quadratic forms 
\[
h^{\star}_\alpha(f,f)=b^{\Omega,\star,\delta}_\alpha(U^{-1}f, U^{-1}f),
\quad \cD (h^\star_\alpha)=U \cD (b^{\Omega,\star,\delta}_\alpha),
\quad \star\in\{N,D\}.
\]
We have then, using the Einstien summation rule for indices,
\begin{align*}
h^{N}_\alpha(u,u)&=\int_{\Sigma} G^{jk}\partial_j u  \,\partial_k u\,\dd\Sigma
-\alpha \int_S |u(s,0)|^2 \dd S, \quad \cD(h^{N}_\alpha)= H^1(\Sigma),\\
h^{D}_\alpha(u,u)&=\text{the restriction of $h^{N}_\alpha$ to }
\cD(h^{D}_\alpha)=\widetilde H^1_{0}(\Sigma)
\end{align*}
with
\[
\widetilde H^1_{0}(\Sigma):=\big\{u\in H^1(\Sigma): u(\cdot,\delta)=0\big\}, \quad
(G^{jk}):=G^{-1}.
\]
Due to \eqref{eq-gg} we can estimate, with some $C_{g}>0$, depending only on $\|L_{s}\|_{\infty}$:
\[
(1-C_{g}\delta) g^{-1} + \dd t^2\le G^{-1}\le (1+C_{g}\delta) g^{-1} + \dd t^2.
\]
Therefore, we have the form inequalities
\begin{equation}
       \label{eq-cc}
h^-_\alpha\le h^{N}_\alpha
 \quad \text{ and} \quad
h^{D}_\alpha\le h^+_\alpha
\end{equation}
with
\[
\begin{aligned}
h^-_\alpha(u,u)&:=(1-C_{g}\delta)\int_{\Sigma} g^{\rho \mu} \,\partial_\rho u \,\partial_\mu u\,\dd\Sigma 
+\int_{\Sigma} |\partial_t u|^2\dd\Sigma-\alpha \int_S u(s,0)^2 \dd S,\\
& \quad \cD(h^-_\alpha)=\cD(h^{N}_\alpha)=H^1(\Sigma),\\
h^+_\alpha(u,u)&:=(1+C_{g}\delta)\int_{\Sigma} g^{\rho \mu} \,\partial_\rho u \,\partial_\mu u\,\dd\Sigma
+\int_{\Sigma} |\partial_t u|^2\dd\Sigma-\alpha \int_S u(s,0)^2 \dd S,\\
& \quad \cD(h^+_\alpha)=\cD(h^{D}_\alpha)=\widetilde H^1_{0}(\Sigma),
\end{aligned}
\]
where, as usually, $(g^{\rho\mu})= g^{-1}$. In particular, if $H^-_\alpha$ and $H^+_\alpha$ are the self-adjoint operators
acting in $L^2(\Sigma,\dd\Sigma)$ and associated with the forms $h^-_\alpha$ and $h^+_\alpha$ respectively, then
it follows from \eqref{eq-est1} and \eqref{eq-cc} that
\begin{equation}
       \label{eq-est2}
E_j(H^-_\alpha)\le E_j(Q^\Omega_\alpha)\le E_j(H^+_\alpha)
\text{ for all $j$ with } E_j(H^+_\alpha)<0.
\end{equation}

\section{Proof of Theorem \ref{thm1}: upper bound}\label{th1pr1}

Recall that the operator $T^{D}$ has been defined in Lemma \ref{lemd}. We have denoted by $E^{D}$ its lowest eigenvalue, and in \emph{this section}
we denote for shortness $\psi:=\psi^{D}$ an associated normalized eigenfunction.
The function $\psi$ will be used to construct test functions for $H_{\alpha}^{+}$. 
\subsection{An estimate for product functions}
Recall that everywhere we assume that $\delta$ is a function of $\alpha$ satisfying \eqref{eq-dada}.
We have the following estimate:
\begin{lemma}
\label{L:estimatetensor}
For $v\in H^1(S)$, consider a function $u$ defined by $u(s,t)=v(s)\psi(t)$, which belongs to $\cD(h^+_\alpha)$.
There exist positive constants $c_{0}^{+}$ and $c_{1}^{+}$ such that, as $\alpha\to+\infty$,
for any $v\in H^1(S)$ there holds
 \begin{multline}
 \dfrac{h^+_\alpha(u,u)}{\|u\|^2_{L^2(\Sigma,\dd\Sigma)}}-E^{D}
 \\
 \le (1+c_{0}^{+}\delta) \dfrac{(1+c_{1}^{+}\delta)
\displaystyle\int_{S} g^{\rho \mu} \,\partial_\rho v \,\partial_\mu v\,\dd s
-\alpha \langle v, K v\rangle_{L^2(S,\dd S)}}{\|v\|^2_{L^2(S,\dd S)}}\\
+\cO(1+\alpha e^{-\delta\alpha}),
\end{multline}
Moreover, the remainder depends only on $\|L_{s}\|_{\infty}$, $\|K\|_{\infty}$, $\|\partial_{t}\varphi\|_{\infty}$, and $\|\partial^2_t\varphi\|_{\infty}$, and it is independent of $v$.
\end{lemma}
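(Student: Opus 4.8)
The plan is to evaluate the Rayleigh quotient of the product function $u=v\otimes\psi$ directly, using that the metric $G$ on $\Sigma$ is, up to the Jacobian $\varphi$, a product metric, so that all the $t$-dependence collapses to one-dimensional integrals which are then handled via the eigenequation for $\psi=\psi^D$ and Lemma~\ref{lemd}. First I would substitute $u(s,t)=v(s)\psi(t)$ into $h^+_\alpha$ and into $\|\cdot\|^2_{L^2(\Sigma,\dd\Sigma)}$. Since $\dd\Sigma=\varphi\,\dd S\,\dd t$ and $\psi$ is independent of $s$, both $\|u\|^2:=\|u\|^2_{L^2(\Sigma,\dd\Sigma)}=\int_S v^2\,w\,\dd S$ and $\int_\Sigma g^{\rho\mu}\partial_\rho u\,\partial_\mu u\,\dd\Sigma=\int_S g^{\rho\mu}\partial_\rho v\,\partial_\mu v\,w\,\dd S$ carry the same weight $w(s):=\int_0^\delta\psi(t)^2\varphi(s,t)\,\dd t$, while $\int_\Sigma|\partial_t u|^2\,\dd\Sigma=\int_S v^2\big(\int_0^\delta\psi'^2\varphi\,\dd t\big)\dd S$ and $\alpha\int_S u(s,0)^2\,\dd S=\alpha\psi(0)^2\,\|v\|^2_{L^2(S)}$. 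This gives
\[
h^+_\alpha(u,u)-E^D\|u\|^2=(1+C_g\delta)\int_S g^{\rho\mu}\partial_\rho v\,\partial_\mu v\,w\,\dd S+\int_S v^2\Big(\int_0^\delta\psi'^2\varphi\,\dd t-\alpha\psi(0)^2-E^D\!\!\int_0^\delta\psi^2\varphi\,\dd t\Big)\dd S.
\]

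The core of the proof is the bracket in the last integral. Writing $\varphi=1-K(s)t+p(s,t)t^2$ as in \eqref{eq-r1} and integrating by parts in $-\psi''=E^D\psi$ with $\psi'(0)=-\alpha\psi(0)$ and $\psi(\delta)=0$, one obtains
\[
\int_0^\delta\psi'^2\,\dd t=\alpha\psi(0)^2+E^D,\qquad \int_0^\delta t\,\psi'^2\,\dd t=\tfrac12\psi(0)^2+E^D\!\!\int_0^\delta t\,\psi^2\,\dd t
\]
(the second identity by integrating $\tfrac{\dd}{\dd t}(t\psi\psi')$ over $(0,\delta)$). Inserting these, the contributions $E^D$ and $K(s)E^D\int_0^\delta t\psi^2\,\dd t$ cancel and the bracket reduces to $-\tfrac12 K(s)\psi(0)^2+r(s)$ with $r(s)=\int_0^\delta\psi'^2\,p\,t^2\,\dd t-E^D\int_0^\delta\psi^2\,p\,t^2\,\dd t$. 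From the explicit formula for $\psi^D$ recalled in the proof of Lemma~\ref{lemd} — which shows that $\psi^2$ and $\psi'^2$ are concentrated at scale $\alpha^{-1}$ near $t=0$ — one gets the moment bounds $\int_0^\delta t^2\psi'^2\,\dd t=\cO(1)$, $\int_0^\delta t^2\psi^2\,\dd t=\cO(\alpha^{-2})$ and $\int_0^\delta t\,\psi^2\,\dd t=\cO(\alpha^{-1})$; combined with $|p|\le\tfrac12\|\partial_t^2\varphi\|_\infty$ and $|E^D|=\cO(\alpha^2)$ this yields $r(s)=\cO(1)$ and $w(s)=1+\cO(\alpha^{-1})$, both uniformly in $s$. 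Finally, $\psi(0)^2=2\alpha+\cO(\alpha e^{-\delta\alpha})$ by Lemma~\ref{lemd}, so $-\tfrac12 K(s)\psi(0)^2=-\alpha K(s)+\cO(\alpha e^{-\delta\alpha})$, and therefore
\[
h^+_\alpha(u,u)-E^D\|u\|^2=(1+C_g\delta)\int_S g^{\rho\mu}\partial_\rho v\,\partial_\mu v\,w\,\dd S-\alpha\langle v,Kv\rangle_{L^2(S)}+\cO\big(1+\alpha e^{-\delta\alpha}\big)\|v\|^2_{L^2(S)}.
\]

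It remains to divide by $\|u\|^2=\int_S v^2 w\,\dd S$ and to replace the weighted quantities by their unweighted counterparts. The key point is that by \eqref{eq-dada} one has $\alpha^{-1}=o(\delta)$, so the weight $w=1+\cO(\alpha^{-1})$ is much closer to $1$ than the scale $\delta$: bounding $\int_S g^{\rho\mu}\partial_\rho v\,\partial_\mu v\,w\,\dd S\le(1+\cO(\alpha^{-1}))\int_S g^{\rho\mu}\partial_\rho v\,\partial_\mu v\,\dd S$ and combining with the prefactor $(1+C_g\delta)$ produces a factor $(1+c_1^+\delta)$, while going from the denominator $\int_S v^2 w\,\dd S$ to $\|v\|^2_{L^2(S)}$ costs a further factor $(1+c_0^+\delta)$ on the positive part and only an $\cO(1)$ error on the curvature term, since $|\langle v,Kv\rangle|\le\|K\|_\infty\|v\|^2_{L^2(S)}$. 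Collecting the estimates gives the announced inequality; by construction all constants are controlled by $\|L_s\|_\infty$ (via $C_g$ and \eqref{eq-gg}), $\|K\|_\infty$, $\|\partial_t\varphi\|_\infty$ and $\|\partial_t^2\varphi\|_\infty$, and the remainder is independent of $v$.

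I expect the main obstacle to be the cancellation in the second step: showing that the one-dimensional bracket loses its pieces of size $\cO(\alpha^2 t)$ and collapses to $-\tfrac12 K(s)\psi(0)^2+\cO(1)$ is exactly what makes the effective potential $-\alpha K$ appear with a bounded remainder instead of one of size $\cO(\alpha\delta)$; it rests on the virial-type identity for $\psi$ together with the sharp exponential localization of $\psi$ and $\psi'$ encoded in Lemma~\ref{lemd}. A second, more routine but delicate point is the sign bookkeeping when passing from the weighted Rayleigh quotient in $(s,t)$ to the unweighted one in $s$, so that all discrepancies genuinely land inside the factors $(1+c_0^+\delta)$, $(1+c_1^+\delta)$ and the $\cO(1+\alpha e^{-\delta\alpha})$ term.
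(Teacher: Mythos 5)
Your proposal is correct in substance and follows the same overall strategy as the paper: test $h^+_\alpha$ on product functions $u=v\psi^D$, collapse everything to one-dimensional integrals in $t$, make the effective potential appear through the eigenvalue equation for $\psi^D$, and invoke Lemma~\ref{lemd} for $E^D$ and $\psi^D(0)^2$. Two points of execution differ. Where the paper performs a double integration by parts on $\int_0^\delta\psi'^2\varphi\,\dd t$, using only $\partial_t\varphi(s,0)=-K(s)$ and the boundedness of $\partial_t^2\varphi$, you expand $\varphi=1-Kt+pt^2$ as in \eqref{eq-r1} and combine the form identity $\int_0^\delta\psi'^2\,\dd t=\alpha\psi(0)^2+E^D$ with the virial-type identity $\int_0^\delta t\psi'^2\,\dd t=\tfrac12\psi(0)^2+E^D\int_0^\delta t\psi^2\,\dd t$ and moment bounds drawn from the explicit form of $\psi^D$; this is algebraically equivalent to the paper's computation, and your identities, the bound $|p|\le\tfrac12\|\partial_t^2\varphi\|_\infty$, and the moment estimates are all correct. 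More interestingly, you keep the exact weight $w(s)=\int_0^\delta\psi^2\varphi\,\dd t$ in both the gradient term and the norm and show $w=1+\cO(\alpha^{-1})$ uniformly, whereas the paper only uses the cruder pointwise bound $|\varphi-1|\le C\delta$ for the norm; your variant is sharper, since it makes the passage from the weighted to the unweighted Rayleigh quotient cost only $\cO(1)$ on the curvature term rather than a relative error of order $\delta$.

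One bookkeeping caveat. What your estimates actually establish is the inequality with the factor $(1+c_0^+\delta)$ sitting only on the nonnegative gradient part, the term $-\alpha\langle v,Kv\rangle_{L^2(S,\dd S)}/\|v\|^2_{L^2(S,\dd S)}$ entering unscaled up to an $\cO(1)$ error; your closing sentence claims the displayed statement, in which $(1+c_0^+\delta)$ multiplies the whole, possibly negative, bracket. When $\langle v,Kv\rangle>0$ that literal form does not follow from your (or, for that matter, the paper's) estimates without an extra $\cO(\delta\alpha)$: for a disc and constant $v$ the left-hand side is $-\alpha K+\cO(1)$ while the displayed right-hand side is $-\alpha K-c_0^+K\,\delta\alpha+\cO(1)$, and $\delta\alpha\to+\infty$. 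The paper's own last step (dividing the estimated numerator by $(1-C_2\delta)\|v\|^2$) ignores the sign of the numerator in the same way, and the discrepancy is harmless downstream, since Lemma~\ref{L:Ejreduit} tolerates errors of order $\delta\alpha$ in any case; so simply state your conclusion in the form you actually derive, with the prefactor on the gradient term only, which is both true and sufficient for the proof of Theorem~\ref{thm1}.
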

\begin{proof}
Through the estimates we denote by $C_j$ various positive constants. Using \eqref{E:DLtvarphi}, a direct evaluation provides
\begin{equation}
        \label{eq-hmin}
\begin{aligned}
h^+_\alpha(u,u)=&(1+C_{g}\delta)\int_{S\times(0,\delta)} \psi(t)^2 g^{\rho \mu} \,\partial_\rho v(s) \,\partial_\mu v(s)\,\varphi(s,t)\dd S\dd t\\
&+\int_{S} v(s)^2 \int_0^\delta \psi'(t)^2\varphi(s,t)\dd t\, \dd S -\alpha \psi(0)^2 \int_S v^2\dd S\\
\le&(1+C_1\delta)\int_{S} \int_0^\delta \psi(t)^2 g^{\rho \mu} \,\partial_\rho v(s) \,\partial_\mu v(s)\dd t \dd S\\
&+\int_{S} v(s)^2 \int_0^\delta \psi'(t)^2\varphi(s,t)\dd t\, \dd S -\alpha \psi(0)^2 \int_S v^2\dd S\\
=&(1+C_1\delta)\int_{S} g^{\rho \mu} \,\partial_\rho v \,\partial_\mu v\,\dd S\\
&+\int_{S} v(s)^2 \int_0^\delta \psi'(t)^2\varphi(s,t)\dd t\, \dd S -\alpha \psi(0)^2 \int_S v^2\dd S.
\end{aligned}
\end{equation}
Moreover, the constant $C_{1}$ depends only on $\|L_{s}\|_{\infty}$ and $\|\partial_{t}\varphi\|_{\infty}$. \Bk Using a repeated integration by parts together with the boundary conditions satisfied by $\psi$, we obtain for all $s\in S$:
\begin{multline}
    \label{eq-if1}
\int_0^\delta \psi'(t)^2\varphi(s,t)\dd t\\
\begin{aligned}
=&\Big[ \psi(t)\psi'(t) \varphi(s,t)\Big]_{t=0}^{t=\delta}
+\int_0^\delta \psi(t)\big(-\psi''(t)\big) \varphi(s,t) \dd t
-\int_0^\delta \psi(t)\psi'(t) \partial_t \varphi(s,t) \dd t\\
=&-\psi(0)\psi'(0)\varphi(s,0)+E^{D} \int_0^\delta \psi(t)^2 \varphi(s,t) \dd t
-\dfrac{1}{2}\int_0^\delta \partial_t \big(\psi(t)^2\big) \partial_t\varphi(s,t)\dd t\\
=&\alpha\psi(0)^2+E^{D} \int_0^\delta \psi(t)^2 \varphi(s,t) \dd t
-\dfrac{1}{2}\bigg(
\Big[\psi(t)^2\partial_t\varphi(s,t)\Big]_{t=0}^{t=\delta} - 
\int_0^\delta  \psi(t)^2 \partial^2_t\varphi(s,t)\dd t
\bigg)\\
=&\alpha\psi(0)^2+E^{D} \int_0^\delta \psi(t)^2 \varphi(s,t) \dd t
-\dfrac{K(s)}{2} \psi(0)^2 +\dfrac 12 \int_0^\delta  \psi(t)^2 \partial^2_{t}\varphi(s,t)\dd t.
\end{aligned}
\end{multline}
The substitution of \eqref{eq-if1} into \eqref{eq-hmin} gives
\begin{multline}
        \label{eq-hmin2}
h^+_\alpha(u,u)\le(1+C_1\delta)\int_{S} g^{\rho \mu} \,\partial_\rho v \,\partial_\mu v\,\dd S\\
+E^{D}\|u\|^2_{L^2(\Sigma,\dd\Sigma)}-\dfrac{\psi(0)^2}{2}  \langle v, K v\rangle_{L^2(S,\dd S)}+
\dfrac 12 \int_S \int_0^\delta  v(s)^2\psi(t)^2 \partial^2_{t}\varphi(s,t)\dd t\, \dd S.
\end{multline}
As the functions $\partial^2_t\varphi$ and $K$
are bounded, we estimate with the help of Lemma~\ref{lemd}:
\begin{multline}
        \label{eq-hmin3}
h^+_\alpha(u,u)-E^{D}\|u\|^2_{L^2(\Sigma,\dd\Sigma)}\\
\leq
(1+C_1\delta)\int_{S} g^{\rho \mu} \,\partial_\rho v \,\partial_\mu v\,\dd S
-\alpha \langle v, K v\rangle_{L^2(S,\dd S)}\\+ \cO(1+\alpha e^{-\delta\alpha})\|v\|^2_{L^2(S,\dd S)},
\end{multline}
where the $\cO$-coefficient depends only on $\|\partial^2_t\varphi\|_{\infty}$ and $\|K\|_{\infty}$. 
Furthermore, due to \eqref{E:DLtvarphi}, we have the estimate
$\|u\|^2_{L^2(\Sigma,\dd\Sigma)}\ge (1-C_2\delta)\|v\|^2_{L^2(S,\dd S)}$, \Gr where $C_{2}$ depends only on $\|\partial_{t}\varphi\|_\infty$.
This gives
\begin{multline*}
\dfrac{h^+_\alpha(u,u)}{\|u\|^2_{L^2(\Sigma,\dd\Sigma)}}-E^{D}\\
\begin{aligned}
\le&
\dfrac{(1+C_1\delta)\displaystyle\int_{S} g^{\rho \mu} \,\partial_\rho v \,\partial_\mu v\,\dd S
-\alpha \langle v, K v\rangle_{L^2(S,\dd S)}+\cO(1+\alpha e^{-\delta\alpha})\|v\|^2_{L^2(S,\dd S)}}
{(1-C_2\delta)\|v\|^2_{L^2(S,\dd S)}},
\end{aligned}
\end{multline*}
and we deduce the lemma by choosing $c_{1}^{+}=C_{1}$, and $c_{0}^{+}>0$ so that $(1-C_{2}\delta)^{-1}\leq 1+c_{0}^{+}\delta$, which is possible since $\delta$ becomes small as $\alpha$ tends to $+\infty$.
\end{proof}

\subsection{Proof of the upper bound}
\label{SS:proofub}
Now for each $j$ we can use the definition \eqref{E:QR} 
by testing on the subspaces $L\subset \cD(h^+_\alpha)$
of the form
\[
L=\{
u: u(s,t)=v(s)\psi(t) \text{ with } v\in \Lambda
\},
\]
where $\Lambda$ are the $j^{\mbox{th}}$ dimensional subspaces of $H^1(S)$, which is the form domain of $-\Delta_S-\alpha K$.
Lemma \ref{L:estimatetensor} then implies
\begin{equation}
\label{E:estimEmED}
E_j(H^+_\alpha)-E^{D}\le
(1+c_{0}^{+}\delta) E_j\big( -(1+c_{1}^{+}\delta)\Delta_S-\alpha K \big)+\cO(1+\alpha e^{-\delta\alpha}).
\end{equation}
The right-hand side can be estimated as follows:
\begin{lemma}
\label{L:Ejreduit}
For any fixed $j\in\NN$ there holds, as $\alpha\to+\infty$,
\[
(1+c_{0}^{+}\delta) E_j\big( -(1+c_{1}^{+}\delta)\Delta_S-\alpha K \big) \leq E_j(-\Delta_S-\alpha K)+\cO(\delta\alpha),
\]
where $\delta$ is a function of $\alpha$ satisfying \eqref{eq-dada}. The constants depend only on $\|L_{s}\|_{\infty}$, $\|K\|_{\infty}$, $\|\partial_{t}\varphi\|_{\infty}$, $\|\partial^2_t\varphi\|_{\infty}$.
\end{lemma}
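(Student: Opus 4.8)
The plan is to reduce the claimed inequality to a Lipschitz-type bound on the eigenvalues of $-\Delta_S-\alpha K$ as a function of the coefficient in front of $-\Delta_S$ and of an overall multiplicative constant. First I would write $A_\lambda := -\lambda\Delta_S - \alpha K$ for $\lambda\ge 1$, so that the quantity on the left is $(1+c_0^+\delta)\,E_j(A_{1+c_1^+\delta})$ and the target is $E_j(A_1)+\cO(\delta\alpha)$. Since $\delta\to 0$ as $\alpha\to+\infty$, the factor $(1+c_0^+\delta)$ multiplies $E_j(A_{1+c_1^+\delta})$, whose size for fixed $j$ is $\cO(\alpha)$ by Corollary~\ref{cor1} (or, more crudely, by the trivial bound $-K_\mx\alpha\le E_j(A_1)\le E_j(A_{1+c_1^+\delta})\le \cdots$ combined with a matching lower bound obtained by dropping the non-negative term $-\lambda\Delta_S$); hence $(1+c_0^+\delta)E_j(A_{1+c_1^+\delta}) = E_j(A_{1+c_1^+\delta}) + \cO(\delta\alpha)$, and it remains to show $E_j(A_{1+c_1^+\delta}) \le E_j(A_1) + \cO(\delta\alpha)$.

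For the latter, I would use the min-max principle \eqref{E:QR} directly. Fix a $j$-dimensional subspace $\Lambda\subset H^1(S)$ realizing (up to $\varepsilon$, or exactly if it exists) the minimum for $E_j(-\Delta_S-\alpha K)$. For any $v\in\Lambda$,
\[
\frac{(1+c_1^+\delta)\displaystyle\int_S g^{\rho\mu}\partial_\rho v\,\partial_\mu v\,\dd s - \alpha\langle v,Kv\rangle}{\|v\|^2_{L^2(S,\dd S)}}
= \frac{\displaystyle\int_S g^{\rho\mu}\partial_\rho v\,\partial_\mu v\,\dd s - \alpha\langle v,Kv\rangle}{\|v\|^2}
+ c_1^+\delta\,\frac{\displaystyle\int_S g^{\rho\mu}\partial_\rho v\,\partial_\mu v\,\dd s}{\|v\|^2}.
\]
Taking the maximum over $v\in\Lambda\setminus\{0\}$, the first term is $\le E_j(-\Delta_S-\alpha K)$ (plus $\varepsilon$), so it suffices to control $\sup_{v\in\Lambda}\|\nabla_s v\|^2/\|v\|^2$, i.e. the largest eigenvalue of $-\Delta_S$ restricted to $\Lambda$. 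The key point is that $\Lambda$ can be chosen to lie in the span of the first $j$ eigenfunctions of $-\Delta_S-\alpha K$; on that span, $\int_S|\nabla_s v|^2\,\dd s = \langle v, -\Delta_S v\rangle \le \langle v, (A_1 + \alpha K)v\rangle \le (E_j(A_1) + \alpha K_\mx)\|v\|^2$, and since $E_j(A_1)\le -K_\mx\alpha + \cO(\alpha)\cdot$ — more precisely $E_j(A_1)+\alpha K_\mx = E_j(-\Delta_S - \alpha(K-K_\mx)) \ge 0$ and $=\cO(\alpha)$ for fixed $j$ by Corollary~\ref{cor1} — we get $\sup_{v\in\Lambda}\|\nabla_s v\|^2/\|v\|^2 = \cO(\alpha)$. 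Multiplying by $c_1^+\delta$ yields the $\cO(\delta\alpha)$ error, and letting $\varepsilon\to 0$ finishes the argument; the dependence of constants on the listed geometric quantities is inherited from Corollary~\ref{cor1} and from $C_1 = c_1^+$ depending only on $\|L_s\|_\infty$ and $\|\partial_t\varphi\|_\infty$.

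The main obstacle — really the only subtle point — is justifying that $\sup_{v\in\Lambda}\|\nabla_s v\|^2/\|v\|^2 = \cO(\alpha)$ uniformly, i.e. that the first $j$ eigenfunctions of the semiclassical-type operator $-\Delta_S-\alpha K$ carry Dirichlet energy of order at most $\alpha$. This is where one must invoke a quantitative a priori bound on $E_j(-\Delta_S-\alpha K)$ for fixed $j$, namely that it equals $-K_\mx\alpha + o(\alpha)$; I would cite Corollary~\ref{cor1}, or alternatively give the one-line test-function argument for the upper bound $E_j(-\Delta_S-\alpha K)\le -K_\mx\alpha + o(\alpha)$ (concentrating $j$ trial functions near a near-maximum point of $K$) together with the trivial lower bound $E_j(-\Delta_S-\alpha K)\ge -K_\mx\alpha$. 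One should also note the mild circularity concern: Corollary~\ref{cor1} is stated as a consequence of Theorem~\ref{thm1}, but its proof in Section~\ref{secor} only uses the reduced operator and elementary semiclassical test functions, so there is no genuine circular dependence; in the write-up I would phrase this step so that it relies only on the self-contained estimate $E_j(-\Delta_S-\alpha K) = \cO(\alpha)$ for fixed $j$.
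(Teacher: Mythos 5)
Your proposal is correct and is in substance the paper's own argument: both hinge on the rough bound $E_j\big(-\Delta_S+\alpha(K_\mx-K)\big)=\cO(\alpha)$ for fixed $j$, which needs only the boundedness of $K$ (a fixed $j$-dimensional subspace of smooth test functions suffices, so citing Corollary~\ref{cor1} is unnecessary and would indeed be circular, Corollary~\ref{cor1} resting on Theorem~\ref{thm1}), and both use it to convert the multiplicative factors $1+c_0^{+}\delta$ and $1+c_1^{+}\delta$ into an additive $\cO(\delta\alpha)$ error. The paper implements the second step through the form inequality $-(1+c_1^{+}\delta)\Delta_S+\alpha(K_\mx-K)\le (1+c_1^{+}\delta)\big(-\Delta_S+\alpha(K_\mx-K)\big)$ together with the homogeneity $E_j(cA)=cE_j(A)$, whereas you test the perturbed Rayleigh quotient on a near-minimizing subspace for $-\Delta_S-\alpha K$ and bound its Dirichlet energy; this is the variational shadow of the same inequality, and note that the energy bound $\|\nabla_s v\|^2\le (E_j+\varepsilon+\alpha K_\mx)\|v\|^2$ already holds on any $\varepsilon$-minimizing subspace, so your appeal to the span of the first $j$ eigenfunctions (which need not exist when $S$ is non-compact) can simply be dropped.
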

\begin{proof}

We have
\begin{multline}
(1+c_{0}^{+}\delta) E_j\big( -(1+c_{1}^{+}\delta)\Delta_S-\alpha K \big)\\
\begin{aligned}
=&(1+c_{0}^{+}\delta) \Big(E_j\big( -(1+c_{1}^{+}\delta)\Delta_S+\alpha(K_\mx-K)\big)-\alpha K_\mx\Big)\\
=&(1+c_{0}^{+}\delta) E_j\big( -(1+c_{1}^{+}\delta)\Delta_S+\alpha(K_\mx-K)\big)-\alpha K_\mx+\cO(\delta\alpha)\\
\le& (1+c_{0}^{+}\delta) E_j\big( -(1+c_{1}^{+}\delta)\Delta_S+(1+c_{1}^{+}\delta)\alpha(K_\mx-K)\big)-\alpha K_\mx+\cO(\delta\alpha)\\
\le &(1+C\delta) E_j(-\Delta_S+\alpha(K_\mx-K)\big)-\alpha K_\mx+\cO(\delta\alpha)\\
= &E_j\big(-\Delta_S+\alpha(K_\mx-K)\big)-\alpha K_\mx\\
&\qquad +\cO\Big(\delta E_j(-\Delta_S+\alpha(K_\mx-K)\big)\Big)
+\cO(\delta\alpha)\\
=& E_j(-\Delta_S-\alpha K) +\cO\Big(\delta E_j\big(-\Delta_S+\alpha(K_\mx-K)\big)\Big)
+\cO(\delta\alpha).
\end{aligned}
\end{multline}
As $K$ is bounded, we have the rough estimate $E_j\big(-\Delta_S+\alpha (K_\mx-K)\big)=\cO(\alpha)$, 
and the remainder depends on the constants $c_{0}^{+}$, $c_{1}^{+}$ and $\|K\|_{\infty}$ only. 
\end{proof}
Finally, combining Lemma \ref{L:Ejreduit} with \eqref{E:estimEmED} and Lemma \ref{lemd}, we get
\begin{multline}
E_j(H^+_\alpha)\le E^{D}+E_j(-\Delta_S-\alpha K)+\cO(1+\delta\alpha+\alpha e^{-\delta\alpha})\\
=-\alpha^2 
+E_j(-\Delta_S-\alpha K)+\cO(1+\delta\alpha+\alpha^2 e^{-\delta\alpha}).
\end{multline}
In order to have an optimal remainder we take
\[
\delta=\dfrac{b \log\alpha}{\alpha}, \quad b\ge 2,
\]
then $E_j(H_\alpha)\le E_j(H^+_\alpha)\le -\alpha^2+E_j(-\Delta_S-\alpha K)+\cO(\log\alpha)$.

\section{Proof of Theorem \ref{thm1}: lower bound}

\subsection{Minoration of the quadratic form}
\label{SS:lb1}

The operator $T^\beta$ of Lemma~\ref{lemn} with $\beta=0$ will play a special role
and it will be denoted by $T^{N}$. The first eigenvalue and the first normalized eigenfunction
will be denoted \emph{in this section} by $E^{N}$ and $\psi$ respectively. Recall again that
$\delta$ and $\alpha$ obey \eqref{eq-dada}.

We represent any function $u\in \cD(h^-_\alpha)$ in the form
\begin{equation}
\label{E:decomposeu}
u(s,t)=v(s)\psi(t)+w(s,t)
\end{equation}
 with
\begin{equation}
      \label{eq-vv}
v(s):=\int_0^\delta \psi(t) u(s,t)\dd t, \quad v\in H^1(S).
\end{equation}
Remark that the both functions $(s,t)\mapsto v(s)\psi(t)$ and
$w$ are in $\cD(h^-_\alpha)$. The following proposition gives a lower
bound on the expression $h^-_\alpha(u,u)-E^{N}\|u\|^2_{L^2(\Sigma,\dd\Sigma)}$ in terms
of this decomposition.
\begin{prop}
\label{P:Lowerbound}
There exist positive constants $c_{0}^{-}$ and $c_{1}^{-}$ such that, as $\alpha\to+\infty$,
\begin{multline}
      \label{eq-hum3}
h^-_\alpha(u,u)-E^{N}\|u\|^2_{L^2(\Sigma,\dd\Sigma)}\\
\begin{aligned}
&\ge(1-c_{0}^{-}\delta)\int_{S} g^{\rho \mu} \,\partial_\rho v \,\partial_\mu v\,\dd S-\alpha \langle v, Kv\rangle_{L^2(S,\dd S)} -c_{1}^{-}(1+\alpha e^{-\delta\alpha})\|v\|^2_{L^2(S,\dd S)}\\
&\quad+\dfrac{\alpha^2}{2}\int_{S} \int_{0}^\delta  w(s,t)^2\dd t\, \dd S
\end{aligned}
\end{multline}
for any $u\in \cD(h^-_\alpha)$. The constants depend only on $\|L_{s}\|_{\infty}$, $\|\partial_{t}\varphi\|_{\infty}$ and $\|\partial_{t}^2\varphi\|_{\infty}$.
\end{prop}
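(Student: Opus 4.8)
The plan is to prove Proposition~\ref{P:Lowerbound} by inserting the decomposition \eqref{E:decomposeu} into the form $h^-_\alpha(u,u)$, expanding the three resulting contributions (the $v\psi$--$v\psi$ term, the cross term, and the $w$--$w$ term), and carefully controlling each of them. First I would observe that $v\psi$ and $w$ are orthogonal in $L^2(\Sigma,\dd\Sigma)$ \emph{only up to a weight}, since the volume form carries the factor $\varphi(s,t)$; by \eqref{E:DLtvarphi} the discrepancy from the flat orthogonality is $\cO(\delta)$, and I would record that $\|u\|^2_{L^2(\Sigma,\dd\Sigma)}=\|v\psi\|^2+\|w\|^2+\cO(\delta)\big(\text{remainders}\big)$, so that subtracting $E^{N}\|u\|^2$ splits into the corresponding pieces plus terms absorbable into the $\cO(\delta)$ contributions once we divide through later. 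The key point enabling the split of the gradient term is that $G^{-1}$ is block-diagonal in the $(s,t)$ variables (no cross metric coefficients in \eqref{eq-gg}), so the $g^{\rho\mu}$-part and the $\partial_t$-part separate cleanly.

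Next I would handle the $v\psi$ block. Here the computation is essentially the one already carried out in Lemma~\ref{L:estimatetensor}, but now for a \emph{lower} bound and with $\psi=\psi^{N}$, $E^{D}$ replaced by $E^{N}$: integrate by parts in $t$ exactly as in \eqref{eq-if1}, using $\psi'(0)=-\alpha\psi(0)$ and now $\psi'(\delta)=0$, to rewrite $\int_0^\delta\psi'(t)^2\varphi(s,t)\,\dd t$ as $\alpha\psi(0)^2+E^{N}\int_0^\delta\psi^2\varphi\,\dd t-\tfrac{K(s)}{2}\psi(0)^2+\tfrac12\int_0^\delta\psi^2\partial_t^2\varphi\,\dd t$ (the boundary term at $t=\delta$ now involves $\psi(\delta)^2$, which by \eqref{eq-fn1d} is $\cO(\alpha e^{-2\delta\alpha})$ and hence negligible). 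Then use $\psi(0)^2=2\alpha+\cO(\alpha e^{-\delta\alpha})$ from \eqref{eq-fn1}, the boundedness of $K$ and $\partial_t^2\varphi$, and $\|v\psi\|^2_{L^2(\Sigma,\dd\Sigma)}\le(1+C\delta)\|v\|^2_{L^2(S,\dd S)}$, to arrive at a lower bound of the shape $(1-c\delta)\int_S g^{\rho\mu}\partial_\rho v\,\partial_\mu v\,\dd S-\alpha\langle v,Kv\rangle-c(1+\alpha e^{-\delta\alpha})\|v\|^2$. The factor $(1-C_g\delta)$ from \eqref{eq-cc} in the definition of $h^-_\alpha$ contributes only another $(1-\cO(\delta))$ in front of the Dirichlet energy.

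The $w$--$w$ block is where the positivity of $T^{N}$ on the orthogonal complement of $\psi$ enters. By construction \eqref{eq-vv}, $\langle w(s,\cdot),\psi\rangle_{L^2(0,\delta)}=0$ for a.e.\ $s$, so \eqref{eq-fpsi} with $\beta=0$ gives $\int_0^\delta w_t(s,t)^2\,\dd t-\alpha w(s,0)^2\ge 0$ pointwise in $s$; I would actually want the quantitative statement that this quadratic form dominates $\tfrac12$ of itself plus a positive multiple of $\|w(s,\cdot)\|^2_{L^2(0,\delta)}$ — more precisely, that on $\{\psi\}^\perp$ one has $\int_0^\delta w_t^2-\alpha w(s,0)^2\ge \tfrac12\int_0^\delta w_t^2 + c\,\alpha^2\int_0^\delta w^2$ for $\delta\alpha$ large, which follows because the second eigenvalue of $T^{N}$ is bounded below by a positive constant times $1/\delta^2$, hence by $\gtrsim\alpha^2$ under \eqref{eq-dada}, with room to spare (this is the point of splitting off half of the $t$-derivative energy). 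The reserved half of $\int_0^\delta w_t^2$ together with the nonnegative tangential gradient energy of $w$ is then used to absorb the cross terms.

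The main obstacle — and the step I would spend the most care on — is the cross term, i.e. the bilinear contribution coupling $v\psi$ to $w$. In the tangential gradient part this is $2(1-C_g\delta)\int_\Sigma g^{\rho\mu}\partial_\rho v\,\psi\,\partial_\mu w\,\varphi\,\dd S\,\dd t$, and in the $t$-derivative part it is $2\int_\Sigma \psi'(t)\,v(s)\,\partial_t w\,\varphi\,\dd S\,\dd t$; there is also the boundary contribution $-2\alpha\psi(0)\int_S v(s)w(s,0)\,\dd S$. The delicate points are (i) that $\partial_\mu w$ need not be orthogonal to $\psi$ even though $w(s,\cdot)$ is, so one must integrate by parts in $s$ or use that $\langle\partial_\mu w,\psi\rangle = \partial_\mu\langle w,\psi\rangle = 0$ to kill the leading piece, leaving only a commutator term coming from the $s$-dependence of $\varphi$ (bounded by $\|\partial_t\varphi\|_\infty$ etc.), and (ii) that in the $t$-derivative cross term one integrates by parts in $t$, moving the derivative off $\psi$ and using $-\psi''=E^N\psi$ plus the boundary conditions, so that the bulk term becomes proportional to $E^N\langle\psi,w(s,\cdot)\rangle=0$ and only $\cO(\alpha\|\psi\|)$-size boundary/weight remainders survive, which pair against $v$ and $w(s,0)$. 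Each surviving cross term, after a Cauchy–Schwarz/Young inequality of the form $2ab\le \varepsilon a^2+\varepsilon^{-1}b^2$ with $\varepsilon\sim\delta$ or $\varepsilon$ a small constant and the crucial use of Lemma~\ref{lemsob} to bound $w(s,0)^2$ by $\eta\int_0^\delta w_t^2+\tfrac{2}{\eta}\int_0^\delta w^2$, gets split into a part absorbed by the reserved $\tfrac12\int w_t^2$ and the $c\alpha^2\int w^2$ from the $w$-block, a part absorbed into the $(1-c_0^-\delta)$-factor in front of the $v$-gradient energy, and a part of order $\cO((1+\alpha e^{-\delta\alpha})\|v\|^2)$ thrown into $c_1^-$. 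Collecting everything and renaming constants yields exactly \eqref{eq-hum3}, with the constants depending only on the quantities listed in the statement; one should double-check at the end that all uses of $\delta\alpha\to\infty$ and $\delta\to0$ are legitimate under \eqref{eq-dada} and that no remainder was secretly of size $\alpha^2\delta$ or worse without a compensating factor.
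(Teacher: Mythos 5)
Your overall plan coincides with the paper's: the same decomposition $u=v\psi+w$ with $v$ as in \eqref{eq-vv}, the integration by parts \eqref{eq-if1} in the tensor block (with $\psi(\delta)^2=\cO(\alpha e^{-2\delta\alpha})$ harmless), the tangential cross term killed up to a $(\varphi-1)$ factor by $\int_0^\delta\psi\,\partial_\mu w\,\dd t=0$, and the $t$-derivative cross term reduced by integration by parts to the $\partial_t\varphi$ remainder. However, your treatment of the $w$-block contains a genuine error, and it is load-bearing. You claim that for $w(s,\cdot)\perp\psi$ one has $\int_0^\delta w_t^2-\alpha w(s,0)^2\ge \tfrac12\int_0^\delta w_t^2+c\,\alpha^2\int_0^\delta w^2$, "because the second eigenvalue of $T^{N}$ is $\gtrsim 1/\delta^2$, hence $\gtrsim\alpha^2$ under \eqref{eq-dada}". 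The comparison goes the other way: \eqref{eq-dada} forces $\delta\alpha\to+\infty$, i.e.\ $1/\delta=o(\alpha)$, so the gap of $T^N$ above its ground state is only of order $\delta^{-2}=o(\alpha^2)$ (with the eventual choice $\delta=b\log\alpha/\alpha$ it is $\alpha^2/(b\log\alpha)^2$). Testing with the second eigenfunction of $T^N$ (which is orthogonal to $\psi$) shows your inequality fails: for it, $\tfrac12\int w_t^2-\alpha w(0)^2\approx\tfrac{\lambda_2}{2}\|w\|^2\sim\delta^{-2}\|w\|^2\ll\alpha^2\|w\|^2$. This matters because absorbing the surviving cross term $-2\int v\,\psi'\,w\,\partial_t\varphi$ requires a Young parameter of size $\alpha^{-2}$, hence a positive term of order $\alpha^2\|w\|^2$; the transverse form on $\psi^\perp$ simply does not supply it, and with the true gap $\sim\delta^{-2}$ you would only obtain an error of order $(\log\alpha)^2\|v\|^2$ and a coefficient $\sim\alpha^2/(\log\alpha)^2$ in front of $\int w^2$, i.e.\ neither the stated $\tfrac{\alpha^2}{2}\int w^2$ term nor, downstream, the $\cO(\log\alpha)$ remainder of Theorem~\ref{thm1}.

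The missing mechanism is that the coefficient $\tfrac{\alpha^2}{2}$ does not come from the transverse spectral gap at all, but from the subtraction of $E^{N}\|u\|^2$ itself. Using the identity \eqref{eq-upr}, $\|u-w\|^2+2\langle u-w,w\rangle=\|u\|^2-\|w\|^2$ in $L^2(\Sigma,\dd\Sigma)$, the subtraction leaves a term $-E^{N}\|w\|^2\approx+\alpha^2\|w\|^2$ by \eqref{E:asymptoEbeta}; the orthogonality property \eqref{eq-fpsi} is used only as the bare nonnegativity $\int_0^\delta w_t^2-\alpha w(s,0)^2\ge0$, which is then discarded. Choosing the Young parameter $r\sim\alpha^{-2}$, the loss $\tfrac{C}{r}\int w^2$ and the weight corrections are absorbed by $-E^{N}\|w\|^2$, leaving $\tfrac{\alpha^2}{2}\int w^2$, while the corresponding $r\alpha^2\|v\|^2$ contribution is $\cO(1)\|v\|^2$ and goes into $c_1^-$. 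Your opening remark that subtracting $E^{N}\|u\|^2$ "splits into the corresponding pieces" is exactly where this term lives, but you never exploit it; once you do, your false coercivity claim becomes unnecessary (and Lemma~\ref{lemsob} is not needed here either — it only enters the $C^3$ argument of Section~\ref{th2pr}), and the rest of your argument matches the paper's proof.
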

The rest of this subsection is devoted to the proof of Proposition \ref{P:Lowerbound}. Using the decomposition \eqref{E:decomposeu}, we clearly have
\begin{align}
       \label{eq-hm1}
h^-_\alpha(u,u)&=(1-C_{g}\delta)\int_{\Sigma} g^{\rho \mu} \,\partial_\rho u \,\partial_\mu u\,\dd\Sigma 
+\int_{\Sigma} |\partial_t u|^2\dd\Sigma-\alpha \int_S u(s,0)^2 \dd S\\
 \label{E:sumI}
&=:I_1+I_2+I_3+I_4,
\end{align}
where we have set
\[
 \left\{
\begin{aligned}
&I_{1}=(1-C_{g}\delta)\int_0^\delta\int_{S} g^{\rho \mu} \,\partial_\rho u \,\partial_\mu u\,\varphi\,\dd S\dd t,
\\
&I_{2}=\int_{S} v(s)^2\int_0^\delta \psi'(t)^2\varphi(s,t)\dd t \,\dd S,
\\
&I_{3}=2 \bigg[
\int_{S} v(s)\int_0^\delta \psi'(s) \partial_t w(s,t)\varphi(s,t)\dd t\, \dd S
- \alpha \psi(0)\int_{S} v(s)w(s,0)\dd S
\bigg]
\\
&I_{4}= \int_{S} \int_0^\delta \big|\partial_t w(s,t)\big|^2\varphi(s,t)\dd t \dd S
-\alpha \psi(0)^2 \int_S v(s)^2\dd S -\alpha \int_S w(s,0)^2\dd S.
\end{aligned}
\right.
\]
We estimate the four terms separately.
\begin{lemma}
\label{L:I1}
There exists $C_{1}>0$ such that, as $\alpha\to+\infty$, 
\begin{equation}
       \label{term1}
I_1\ge
(1-C_{1}\delta)\int_{S} g^{\rho \mu} \,\partial_\rho v \,\partial_\mu v\,\dd S.
\end{equation}
\Gr Moreover, the constant $C_{1}$ depends only on $\|\partial_{t}\varphi\|_{\infty}$ and $\|L_{s}\|_{\infty}$
and is independent of~$u$.
\end{lemma}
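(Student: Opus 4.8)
The plan is to isolate the contribution of the first term $I_1$ to the lower bound, and to show that after discarding the transverse-gradient part of $w$ (which is non-negative) what remains is bounded below by the intrinsic Dirichlet energy of $v$ on $S$, up to a multiplicative error of size $\cO(\delta)$. The starting point is the pointwise-in-$s$ estimate $\varphi(s,t)\ge 1-\|\partial_t\varphi\|_\infty\delta$ coming from \eqref{E:DLtvarphi}, together with the fact that $\sum_{\rho,\mu}g^{\rho\mu}\partial_\rho u\,\partial_\mu u\ge 0$ since $g^{-1}$ is positive-definite; combining these gives
\[
I_1\ge (1-C_g\delta)(1-\|\partial_t\varphi\|_\infty\delta)\int_0^\delta\int_S g^{\rho\mu}\partial_\rho u\,\partial_\mu u\,\dd S\,\dd t.
\]

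Next I would insert the decomposition \eqref{E:decomposeu}, $u(s,t)=v(s)\psi(t)+w(s,t)$. The inner integral $\int_S g^{\rho\mu}\partial_\rho u\,\partial_\mu u\,\dd S$ expands, for each fixed $t$, into three pieces: $\psi(t)^2\int_S g^{\rho\mu}\partial_\rho v\,\partial_\mu v\,\dd S$, the cross term $2\psi(t)\int_S g^{\rho\mu}\partial_\rho v\,\partial_\mu(w(\cdot,t))\,\dd S$, and $\int_S g^{\rho\mu}\partial_\rho w\,\partial_\mu w\,\dd S\ge 0$. The key observation is that integrating the first piece over $t\in(0,\delta)$ produces $\|\psi\|_{L^2(0,\delta)}^2\int_S g^{\rho\mu}\partial_\rho v\,\partial_\mu v\,\dd S=\int_S g^{\rho\mu}\partial_\rho v\,\partial_\mu v\,\dd S$ since $\psi$ is normalized, while integrating the cross term over $t$ annihilates it: by \eqref{eq-vv} we have $\int_0^\delta\psi(t)w(s,t)\,\dd t=\int_0^\delta\psi(t)u(s,t)\,\dd t-v(s)\int_0^\delta\psi(t)^2\,\dd t=v(s)-v(s)=0$ for every $s$, hence $\int_0^\delta\psi(t)\,\partial_\mu(w(\cdot,t))(s)\,\dd t=\partial_\mu\big(\int_0^\delta\psi(t)w(\cdot,t)\,\dd t\big)(s)=0$ (differentiation in $s$ commutes with integration in $t$). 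Therefore the cross term drops out entirely and, discarding the non-negative $w$-energy,
\[
I_1\ge (1-C_g\delta)(1-\|\partial_t\varphi\|_\infty\delta)\int_S g^{\rho\mu}\partial_\rho v\,\partial_\mu v\,\dd S,
\]
from which \eqref{term1} follows by absorbing the product of the two factors into $1-C_1\delta$ for a suitable $C_1$ depending only on $\|\partial_t\varphi\|_\infty$ and $\|L_s\|_\infty$ (the latter through $C_g$), valid once $\delta$ is small, which holds as $\alpha\to+\infty$ by \eqref{eq-dada}.

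The only subtlety — and the step I would be most careful about — is the justification that $\int_0^\delta\psi(t)\,\partial_\mu w(\cdot,t)\,\dd t=\partial_\mu\int_0^\delta\psi(t)w(\cdot,t)\,\dd t$ at the level of $H^1(S)$ (i.e. the distributional tangential gradient commutes with the bounded transverse averaging against $\psi$), and hence vanishes identically since its argument is $0$. This is where the membership $v\in H^1(S)$ asserted in \eqref{eq-vv} and the fact that $w\in\cD(h^-_\alpha)$ enter: one checks it first for smooth $u$ and passes to the limit, using that $u\mapsto v$ and $u\mapsto w$ are continuous on $H^1(\Sigma)$ with the stated ranges. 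Everything else is the routine positivity-and-smallness bookkeeping indicated above.
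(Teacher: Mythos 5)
Your proof is correct and follows essentially the same route as the paper: the same decomposition $u=v\psi+w$, the same orthogonality relation $\int_0^\delta\psi(t)\,\partial_\mu w(s,t)\,\dd t=0$ coming from \eqref{eq-vv}, and discarding the nonnegative $w$-energy, with the constant depending only on $\|L_s\|_\infty$ (through $C_g$) and $\|\partial_t\varphi\|_\infty$. The only difference is a slight streamlining: you extract the bound $\varphi\ge 1-\|\partial_t\varphi\|_\infty\delta$ before decomposing (legitimate, since $g^{\rho\mu}\partial_\rho u\,\partial_\mu u\ge 0$ pointwise), so the cross term vanishes exactly against the unit weight, whereas the paper keeps the weight $\varphi$, uses the same orthogonality to reduce the cross term to one with factor $\varphi-1$, and then absorbs it via Cauchy--Schwarz into the $v$- and $w$-energies; both yield \eqref{term1} with the stated dependence of $C_1$.
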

\begin{proof}
Following the decomposition \eqref{E:decomposeu}, we get by using \eqref{E:DLtvarphi} a constant $C>0$ such that
\begin{equation}
     \label{eq-gg1}
\begin{split}     
\int_0^\delta\int_{S} & g^{\rho \mu}  \,\partial_\rho u(s,t) \,\partial_\mu u(s,t)\,\varphi(s,t)\dd S\dd t 
\\=&
\int_0^\delta\int_{S} \psi(t)^2 g^{\rho \mu} \,\partial_\rho v(s) \,\partial_\mu v(s)\,\varphi(s,t)\dd S\dd t+2 
\int_0^\delta\int_{S} \psi(t) g^{\rho \mu} \,\partial_\rho v(s) \,\partial_\mu w(s,t)\,\varphi\dd S\dd t
\\&+
\int_0^\delta\int_{S} g^{\rho \mu} \,\partial_\rho w(s,t) \,\partial_\mu w(s,t)\,\varphi\dd S\dd t\\
\ge&
(1-C\delta)\int_0^\delta\int_{S} \psi(t)^2 g^{\rho \mu} \,\partial_\rho v(s) \,\partial_\mu v(s)\dd S\dd t+2 
\int_0^\delta\int_{S} \psi(t) g^{\rho \mu} \,\partial_\rho v(s) \,\partial_\mu w(s,t)\,\varphi(s,t)\dd S\dd t\\
&+(1-C\delta)\int_0^\delta\int_{S} g^{\rho \mu} \,\partial_\rho w(s,t) \,\partial_\mu w(s,t)\dd S\dd t\\
=&
(1-C\delta)\int_{S} g^{\rho \mu} \,\partial_\rho v(s) \,\partial_\mu v(s)\dd S+2 
\int_0^\delta\int_{S} \psi(t) g^{\rho \mu} \,\partial_\rho v(s) \,\partial_\mu w(s,t)\,\varphi(s,t)\dd S\dd t\\
&+(1-C\delta)\int_0^\delta\int_{S} g^{\rho \mu} \,\partial_\rho w(s,t) \,\partial_\mu w(s,t)\,\dd S\dd t,
\end{split}
\end{equation}
\Gr where the constant $C$ depends only on $\|\partial_{t}\varphi\|_{\infty}$. \Bk Remark that for the function $w$ we have
\begin{equation}
     \label{eq-w}
     \int_0^\delta \psi(t)w(s,t)\dd t=0 \text{ and, hence,  }
\int_0^\delta \psi(t)\partial_\rho w(s,t)\dd t=0,
\quad s\in S.
\end{equation}
We deduce:
\begin{multline}
\int_0^\delta\int_{S} \psi g^{\rho \mu} \,\partial_\rho v \,\partial_\mu w\,\varphi\dd S\dd t\\
=
\int_0^\delta\int_{S} \psi g^{\rho \mu} \,\partial_\rho v \,\partial_\mu w\,\dd S\dd t
+
\int_0^\delta\int_{S} \psi g^{\rho \mu} \,\partial_\rho v \,\partial_\mu w\, (\varphi-1)\dd S\dd t\\
=\int_0^\delta\int_{S} \psi g^{\rho \mu} \,\partial_\rho v \,\partial_\mu w\, (\varphi-1)\dd S\dd t.
\end{multline}
Using again \eqref{E:DLtvarphi}, we estimate with the same constant $C$, using the Cauchy-Schwarz inequality
for the metric $(g^{\rho\mu})$,
\begin{multline}
\Big|\int_0^\delta\int_{S} \psi(t) g^{\rho \mu} \,\partial_\rho v(s) \,\partial_\mu w(s,t)\,(\varphi(s,t)-1)\dd S\dd t\Big|\\
\begin{aligned}
&\le
C\delta
\int_0^\delta\int_{S} 
\Big|g^{\rho \mu} \,\psi(t) \partial_\rho v(s) \,\partial_\mu w(s,t)\Big|\dd S\dd t\\
&\le
\dfrac{C\delta}{2}
\int_0^\delta\int_{S} \psi(t)^2
g^{\rho \mu} \,\partial_\rho v(s) \,\partial_\mu v(s)\dd S\dd t
+\dfrac{C\delta}{2}
\int_0^\delta\int_{S}g^{\rho \mu} \,\partial_\rho w(s,t) \,\partial_\mu w(s,t)\dd S\dd t\\
&=
\dfrac{C\delta}{2}
\int_{S} g^{\rho \mu} \,\partial_\rho v \,\partial_\mu v\,\dd S
+\dfrac{C\delta}{2}
\int_0^\delta\int_{S}g^{\rho \mu} \,\partial_\rho w(s,t) \,\partial_\mu w(s,t)\dd S\dd t
\end{aligned}
\end{multline}
which gives
\begin{multline}
2 \bigg[
\int_0^\delta\int_{S} \psi(t) g^{\rho \mu} \,\partial_\rho v(s) \,\partial_\mu w(s,t)\,\varphi(s,t)\dd S\dd t\bigg]\\
\le 
C\delta
\int_{S} g^{\rho \mu} \,\partial_\rho v(s) \,\partial_\mu v(s)\dd S
+C\delta
\int_0^\delta\int_{S}g^{\rho \mu} \,\partial_\rho w(s,t) \,\partial_\mu w(s,t)\dd S\dd t.
\end{multline}
Substituting the last inequality into \eqref{eq-gg1} we obtain, 
\begin{multline*}
\int_0^\delta\int_{S} g^{\rho \mu} \,\partial_\rho u \,\partial_\mu u\,\varphi\dd S\dd t 
\ge 
(1-2C\delta)\int_{S} g^{\rho \mu} \,\partial_\rho v \,\partial_\mu v\dd S
+(1-2C\delta)\int_0^\delta\int_{S} g^{\rho \mu} \,\partial_\rho w \,\partial_\mu w\,\dd S\dd t
\end{multline*}
and, therefore, for sufficiently small $\delta$:
\[
(1-C_{g}\delta)\int_0^\delta\int_{S} g^{\rho \mu} \,\partial_\rho u \,\partial_\mu u\,\varphi\dd S\dd t 
\ge(1-C_{g}\delta)(1-2C\delta)\int_{S} g^{\rho \mu} \,\partial_\rho v \,\partial_\mu v\dd S.
\]
The result follows as $C_{g}$ depends only on $\|L_{s}\|_{\infty}$, and $C$ depends only on $\|\partial_{t}\varphi\|_{\infty}$.
\end{proof}

\begin{lemma}
     \label{term2}
       There exists $C_{2}>0$ such that, as $\alpha\to+\infty$:
     \begin{multline}
I_2\ge  \alpha \psi(0)^2\int_S v(s)^2\dd S
+E^{N} \|u-w\|^2_{L^2(\Sigma,\dd\Sigma)}\\
-\alpha \langle v, Kv\rangle_{L^2(S,\dd S)}
-C_2(1+ \alpha e^{-\delta\alpha})\|v\|^2_{L^2(S,\dd S)}.
\end{multline}
The constant $C_{2}$ depends only on $\|K\|_{\infty}$, $\|\partial_{t}\varphi\|_{\infty}$ and $\|\partial_{t}^2\varphi\|_{\infty}$
and is independent of~$u$.
\end{lemma}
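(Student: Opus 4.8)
The plan is to reduce $I_2$ to a one‑dimensional integration by parts in the variable $t$, carried out exactly as in the computation \eqref{eq-if1} behind the upper bound, but now with $\psi=\psi^{N}$, whose only difference from $\psi^{D}$ is the Neumann condition $\psi'(\delta)=0$ at the far end. First I would write
\[
I_2=\int_S v(s)^2\Big(\int_0^\delta \psi'(t)^2\varphi(s,t)\,\dd t\Big)\,\dd S
\]
and, for fixed $s$, integrate by parts twice using $\psi'(0)=-\alpha\psi(0)$, $\psi'(\delta)=0$, $-\psi''=E^{N}\psi$, and $\partial_t\varphi(s,0)=-K(s)$ from \eqref{eq-r1}, which yields
\[
\int_0^\delta \psi'(t)^2\varphi(s,t)\,\dd t
=\alpha\psi(0)^2+E^{N}\!\int_0^\delta\!\psi(t)^2\varphi(s,t)\,\dd t-\frac{K(s)}{2}\psi(0)^2
-\frac12\psi(\delta)^2\partial_t\varphi(s,\delta)+\frac12\!\int_0^\delta\!\psi(t)^2\partial_t^2\varphi(s,t)\,\dd t .
\]
Compared with \eqref{eq-if1}, the only new term is the boundary contribution at $t=\delta$, which formerly vanished because $\psi^{D}(\delta)=0$.

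Next I would insert the asymptotics of Lemma~\ref{lemn}: from $\psi(0)^2=2\alpha+\cO(\alpha e^{-\delta\alpha})$ the term $-\tfrac{K(s)}{2}\psi(0)^2$ becomes $-\alpha K(s)+\cO(\alpha e^{-\delta\alpha})$ uniformly in $s$ with a constant controlled by $\|K\|_\infty$; from $\psi(\delta)^2=4\alpha e^{-2\delta\alpha}+\cO(\alpha e^{-3\delta\alpha})$ and the boundedness of $\partial_t\varphi$ the term $-\tfrac12\psi(\delta)^2\partial_t\varphi(s,\delta)$ is an $\cO(\alpha e^{-\delta\alpha})$ (using $\alpha e^{-2\delta\alpha}\le\alpha e^{-\delta\alpha}$); and since $\|\psi\|^2_{L^2(0,\delta)}=1$, the boundedness of $\partial_t^2\varphi$ makes $\tfrac12\int_0^\delta\psi(t)^2\partial_t^2\varphi(s,t)\,\dd t$ an $\cO(1)$ controlled by $\|\partial_t^2\varphi\|_\infty$. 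Collecting these pointwise estimates gives, for some $C_2>0$ depending only on $\|K\|_\infty$, $\|\partial_t\varphi\|_\infty$, $\|\partial_t^2\varphi\|_\infty$,
\[
\int_0^\delta\psi'(t)^2\varphi(s,t)\,\dd t\ \ge\ \alpha\psi(0)^2+E^{N}\!\int_0^\delta\!\psi(t)^2\varphi(s,t)\,\dd t-\alpha K(s)-C_2\big(1+\alpha e^{-\delta\alpha}\big),\qquad s\in S .
\]

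Finally I would multiply by $v(s)^2$ and integrate over $S$, identifying $\int_S v(s)^2\big(\int_0^\delta\psi(t)^2\varphi(s,t)\,\dd t\big)\,\dd S=\|u-w\|^2_{L^2(\Sigma,\dd\Sigma)}$ because $u-w=v\psi$ by \eqref{E:decomposeu}, together with $\int_S K(s)v(s)^2\,\dd S=\langle v,Kv\rangle_{L^2(S,\dd S)}$ and $\int_S v(s)^2\,\dd S=\|v\|^2_{L^2(S,\dd S)}$; this is precisely the claimed inequality. The computation is essentially routine; the only point needing care is the new boundary term $\psi(\delta)^2\partial_t\varphi(s,\delta)$ at the Neumann end, which is harmless thanks to the exponential smallness \eqref{eq-fn1d}, and checking that all $\cO$‑estimates are genuinely uniform in $s$, which follows from $\|\psi\|_{L^2(0,\delta)}=1$ and the uniform bounds on $\varphi$, $\partial_t\varphi$, $\partial_t^2\varphi$ recorded in Lemma~\ref{Linter}.
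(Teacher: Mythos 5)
Your proposal is correct and follows essentially the same route as the paper: the same integration by parts with the extra boundary term $-\tfrac12\psi(\delta)^2\partial_t\varphi(s,\delta)$ handled via \eqref{eq-fn1d}, the replacement of $\tfrac{\psi(0)^2}{2}K$ by $\alpha K$ up to an $\cO(\alpha e^{-\delta\alpha})\|v\|^2$ error using \eqref{eq-fn1}, and the identification $\|u-w\|^2_{L^2(\Sigma,\dd\Sigma)}=\int_S v^2\int_0^\delta\psi^2\varphi$. The only difference is organizational (you collect pointwise-in-$s$ bounds before integrating over $S$, the paper integrates first), which does not affect the argument.
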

\begin{proof}
As in \eqref{eq-if1}, an integration by part leads to 
\begin{multline*}
\int_0^\delta \psi'(t)^2\varphi(s,t)\dd t
=
\alpha \psi(0)^2+ E^{N} \int_0^\delta \psi(t)^2 \varphi(s,t) \dd t
-\dfrac{K(s)}{2} \psi(0)^2\\
- \dfrac{\partial_t\varphi(s,\delta)}{2} \psi(\delta)^2
+\dfrac 12
\int_0^\delta \psi(t)^2 \partial^2_t\varphi(s,t)\dd t.
\end{multline*}
The additional term in comparison with \eqref{eq-if1} comes from the fact  that $\psi(\delta)\neq0$. We deduce: 
\begin{equation}
     \label{term2a}
\begin{split}
I_2&=\int_{S} v(s)^2\int_0^\delta \psi'(t)^2\varphi(s,t)\dd t \,\dd S\\
&=\alpha \psi(0)^2\int_S v^2\dd S +E^{N} \|u-w\|^2_{L^2(\Sigma,\dd\Sigma)} 
-\dfrac{\psi(0)^2}{2} \langle v, Kv\rangle_{L^2(S,\dd S)}\\
&\quad -
\dfrac{\psi(\delta)^2}{2} \langle v, \partial_t \varphi(\cdot,\delta) v\rangle_{L^2(S,\dd S)}
+
\dfrac 12
\int_S v(s)^2\int_0^\delta \psi(t)^2 \partial^2_t\varphi(s,t)\dd t \dd S.
\end{split}
\end{equation}
Due to \eqref{eq-fn1d}, $\psi(\delta)^2=O(\alpha e^{-2\delta\alpha})$, and there exists $C>0$ such that for $\alpha$ large enough
one has
\begin{equation}
\label{E:L13inter1}
\dfrac{\psi(\delta)^2}{2} \langle v, \partial_t \varphi(\cdot,\delta) v\rangle_{L^2(S,\dd S)}\le C \alpha e^{-2\delta\alpha}\|v\|^2_{L^2(S,\dd S)},
\end{equation}
where the constant $C$ depends only on $\|\partial_{t}\varphi\|_{\infty}$. We also have:
\begin{equation}
\label{E:L13inter2}
\Big|\int_0^\delta \psi(t)^2 \partial^2_t\varphi(s,t)\dd t\Big|
\le
\|\partial_{t}^2\varphi\|_{\infty}.
\end{equation}
Moreover, \eqref{eq-fn1} provides $C'>0$, depending only on $\|K\|_{\infty}$, such that for $\alpha$ large enough:
\begin{equation}
\label{E:majorpsi(0)}
\frac{\psi(0)^2}{2}\langle v, Kv\rangle_{L^2(S,\dd S)} \leq \alpha  \langle v, Kv\rangle_{L^2(S,\dd S)}+C'\alpha e^{-\delta\alpha}\|v\|^2_{L^2(S,\dd S)}.
\end{equation}
The lemma follows by combining \eqref{E:L13inter1}--\eqref{E:majorpsi(0)} with \eqref{term2a}.
\end{proof}
The crossed term $I_{3}$ needs a parametric estimate:
\begin{lemma}
\label{L:I3}
There exists  $C_{3}>0$ such that for any $r>0$ for large $\alpha$ one has
\[
I_{3} \geq
 2E^{N} \langle u-w,w\rangle_{L^2(\Sigma,\dd\Sigma)}-C_{3}r\alpha^2 \|v\|^2_{L^2(S,\dd S)}
-C_{3}\dfrac{1}{r}\int_S \int_{0}^{\delta}w(s,t)^2 \dd t \dd S.
\]
Moreover, the constant $C_{3}$ depends only on $\|\partial_{t}\varphi\|_{\infty}$ and does not depend on $u$. 
\end{lemma}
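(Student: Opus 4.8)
\textbf{Proof plan for Lemma~\ref{L:I3}.}
The plan is to turn $I_3$ into an exact identity by integrating by parts in the variable $t$, using that $\psi=\psi^N$ solves $-\psi''=E^N\psi$ with $\psi'(0)=-\alpha\psi(0)$ and $\psi'(\delta)=0$, and then to absorb the only surviving error term (the one caused by $\partial_t\varphi\neq0$) into the two quantities $\|v\|_{L^2(S,\dd S)}^2$ and $\int_S\int_0^\delta w^2\,\dd t\,\dd S$ by means of a Cauchy--Schwarz inequality in $t$ followed by Young's inequality with the free parameter $r$.

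First I would fix $s\in S$ and integrate by parts in the inner integral of $I_3$, differentiating $t\mapsto\psi'(t)\varphi(s,t)$ and integrating $\partial_t w(s,t)$:
\[
\int_0^\delta \psi'(t)\,\varphi(s,t)\,\partial_t w(s,t)\,\dd t
=\big[\psi'(t)\varphi(s,t)w(s,t)\big]_{0}^{\delta}
-\int_0^\delta\big(\psi''(t)\varphi(s,t)+\psi'(t)\partial_t\varphi(s,t)\big)w(s,t)\,\dd t .
\]
Since $\psi'(\delta)=0$, $\psi'(0)=-\alpha\psi(0)$ and $\varphi(s,0)=1$, the boundary term equals $\alpha\psi(0)w(s,0)$, which cancels exactly against the term $-\alpha\psi(0)\int_S v(s)w(s,0)\,\dd S$ in $I_3$, so no $\cO(\alpha)$ trace contribution is left over. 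Substituting $\psi''=-E^N\psi$, and noting that $(u-w)(s,t)=v(s)\psi(t)$ together with $\dd\Sigma=\varphi\,\dd S\,\dd t$, so that $\int_S\int_0^\delta v(s)\psi(t)w(s,t)\varphi(s,t)\,\dd t\,\dd S=\langle u-w,w\rangle_{L^2(\Sigma,\dd\Sigma)}$, I obtain the identity
\[
I_3=2E^N\langle u-w,w\rangle_{L^2(\Sigma,\dd\Sigma)}
-2\int_S v(s)\int_0^\delta \psi'(t)\,\partial_t\varphi(s,t)\,w(s,t)\,\dd t\,\dd S .
\]
It then remains to bound the last integral. Using $|\partial_t\varphi|\le\|\partial_t\varphi\|_\infty$ and Cauchy--Schwarz in $t$ produces the factor $\|\psi'\|_{L^2(0,\delta)}$, which by \eqref{eq-fn2} is at most $2\alpha$ for $\alpha$ large; this yields a bound of the form $C\alpha\int_S|v(s)|\big(\int_0^\delta w(s,t)^2\,\dd t\big)^{1/2}\dd S$ with $C$ a fixed multiple of $\|\partial_t\varphi\|_\infty$. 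Applying Young's inequality $ab\le\frac{r\alpha}{2}a^2+\frac1{2r\alpha}b^2$ pointwise in $s$ then splits this expression into $C r\alpha^2\|v\|_{L^2(S,\dd S)}^2+\frac{C}{r}\int_S\int_0^\delta w(s,t)^2\,\dd t\,\dd S$, and taking $C_3=C$ gives the claimed lower bound, with $C_3$ depending only on $\|\partial_t\varphi\|_\infty$ and independent of $u$ (the factor $2$ in $\|\psi'\|_{L^2(0,\delta)}\le2\alpha$ being absolute once $\alpha$ is large).

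The argument is elementary; the only genuinely delicate point is the bookkeeping. One must check that the $\alpha\psi(0)w(s,0)$ term produced at the boundary of the integration by parts is \emph{exactly} the quantity subtracted in the definition of $I_3$, so that this dangerous trace term is cancelled identically rather than merely estimated, and one must distribute the residual $\partial_t\varphi$-term between $\|v\|^2$ and $\int\!\int w^2$ with precisely the weights $r\alpha^2$ and $1/r$ required by the statement — the freedom in $r$ being what will later allow this term to be balanced against the positive contributions $I_1$, $I_2$, $I_4$ in the proof of Proposition~\ref{P:Lowerbound}.
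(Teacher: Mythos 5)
Your proposal is correct and follows essentially the same route as the paper: integration by parts in $t$ using $\psi'(\delta)=0$, $\psi'(0)=-\alpha\psi(0)$, $\varphi(s,0)=1$ and $-\psi''=E^N\psi$ to cancel the trace term exactly and produce the identity $I_3=2E^N\langle u-w,w\rangle_{L^2(\Sigma,\dd\Sigma)}-2\int_S\int_0^\delta v\,\psi'\,w\,\partial_t\varphi\,\dd t\,\dd S$, followed by an $r$-weighted Young-type estimate of the residual term using $\|\psi'\|^2_{L^2(0,\delta)}\le 2\alpha^2$ from \eqref{eq-fn2}. The only cosmetic difference is that you apply Cauchy--Schwarz in $t$ before Young's inequality in $s$, whereas the paper applies the weighted Young inequality pointwise and then integrates; both yield $C_3$ a fixed multiple of $\|\partial_t\varphi\|_\infty$, independent of $u$.
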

\begin{proof}
Using the integration by parts we have:
\begin{equation}
        \label{eq-i3aa}
\begin{aligned}
I_3&=2\int_{S} v(s)\int_0^\delta \psi'(s) \partial_t w(s,t)\varphi(s,t)\dd t\, \dd S
- 2 \alpha \psi(0)\int_{S} v(s)w(s,0)\dd S\\
&=2\int_S v(s)
\bigg(
\Big[\psi'(t)w(s,t)\varphi(s,t)\Big]_{t=0}^{t=\delta} -\int_{0}^{\delta} \psi''(t) w(s,t)\varphi(s,t)\dd t\\
&\quad
-\int_{0}^{\delta} \psi'(t) w(s,t)\partial_t \varphi(s,t)\dd t
\bigg)\dd S- 2 \alpha \psi(0)\int_{S} v(s)w(s,0)\dd S\\
&=2\alpha \psi(0)\int_S v(s)w(s,0)\varphi(s,0)\dd S+2E^{N} \int_S\int_0^\delta v(s)\psi(t) w(s,t)\varphi(s,t)\dd t\dd S\\
&\quad -2\int_S\int_{0}^{\delta}v(s) \psi'(t) w(s,t)\partial_t \varphi(s,t)\dd t\dd S- 2 \alpha \psi(0)\int_{S} v(s)w(s,t)\dd S\\
&= 2E^{N} \langle u-w,w\rangle_{L^2(\Sigma,\dd\Sigma)}-2\int_S\int_{0}^{\delta} v(s) \psi'(t) w(s,t)\partial_t \varphi(s,t)\dd t\dd S,
\end{aligned}
\end{equation}
where we have used the boundary conditions $\psi'(\delta)=0$ and $\psi'(0)=-\alpha\psi(0)$. 

We estimate now, with any $r>0$:
\begin{multline}
       \label{eq-i32}
\Big|2\int_S\int_{0}^{\delta} v(s)\psi'(t) w(s,t)\partial_t \varphi(s,t)\dd t\dd S\Big|\\
\begin{aligned}
&\le
\|\partial_{t}\varphi\|_{\infty}\int_S\int_{0}^{\delta} 2\Big|v(s)\psi'(t) w(s,t) \Big|\dd t\dd S\\
&\le \|\partial_{t}\varphi\|_{\infty}r
\int_S\int_{0}^{\delta} \psi'(t)^2 v(s)^2 \dd t\dd S
+\dfrac{\|\partial_{t}\varphi\|_{\infty}}{r} \int_S\int_{0}^{\delta}  w(s,t)^2 \dd t \dd S\\
&\le 2\|\partial_{t}\varphi\|_{\infty}r\alpha^2 \|v\|^2_{L^2(S,\dd S)}
+\dfrac{\|\partial_{t}\varphi\|_{\infty}}{r} \int_S\int_{0}^{\delta}  w(s,t)^2 \dd t \dd S,
\end{aligned}
\end{multline}
where we have used $\|\psi'\|^2_{L^2(0,\delta)} \leq 2 \alpha^2$ for $\alpha$ large enough, see \eqref{eq-fn2}. The substitution of \eqref{eq-i32} into \eqref{eq-i3aa}
gives the lemma by choosing $C_{3}=2\|\partial_{t}\varphi\|_{\infty}$.
\end{proof}

We are now able to finish the proof of Proposition \ref{P:Lowerbound}. We use Lemmas \ref{L:I1}--\ref{L:I3} in \eqref{E:sumI} and deduce
\begin{equation}
      \label{eq-hum1}
      \begin{aligned}
h^-_\alpha(u,u)&\ge
(1-C_{1}\delta)\int_{S} g^{\rho \mu} \,\partial_\rho v \,\partial_\mu v\,\dd S-\alpha\langle v, Kv\rangle_{L^2(S,\dd S)}\\
&+E^{N} \|u-w\|^2_{L^2(\Sigma,\dd\Sigma)} +2E^{N} \langle u-w,w\rangle_{L^2(\Sigma,\dd\Sigma)}+\int_{S} \int_0^\delta \big|\partial_t w(s,t)\big|^2\varphi(s,t)\dd t \dd S\\
&-\alpha \int_S |w(s,0)|^2\dd S -C_{4}(1+ r\alpha^2+\alpha e^{-\delta\alpha})\|v\|^2_{L^2(S,\dd S)} -\dfrac{C_{3}}{r} \int_{S} \int_{0}^\delta \big| w(s,t)\big|^2\dd t\, \dd S,
\end{aligned}
\end{equation}
where $C_{4}=\max(C_{2},C_{3})$. We have the equality
\begin{equation}
\label{eq-upr}
\|u-w\|^2_{L^2(\Sigma,\dd\Sigma)}+2 \langle u-w,w \rangle_{L^2(\Sigma,\dd\Sigma)}=
\|u\|^2_{L^2(\Sigma,\dd\Sigma)} -\|w\|^2_{L^2(\Sigma,\dd\Sigma)}.
\end{equation}
Due to \eqref{eq-w}, we can use \eqref{eq-fpsi}, so that
$$\int_{S} \int_{0}^\delta \big|\partial_t w(s,t)\big|^2\dd t\, \dd S
-\alpha \int_S w(s,0)^2\dd S\geq0,$$ 
and, therefore, substituting \eqref{eq-upr} into \eqref{eq-hum1}, we get
\begin{multline}
      \label{eq-hum2}
h^-_\alpha(u,u)-E^{N}\|u\|^2_{L^2(\Sigma,\dd\Sigma)}
\ge(1-C_{1}\delta)\int_{S} g^{\rho \mu} \,\partial_\rho v \,\partial_\mu v\,\dd S-\alpha \langle v, Kv\rangle_{L^2(S,\dd S)}
\\ \quad-C_{4}(1+r\alpha^2+ \alpha e^{-\delta\alpha})\|v\|^2_{L^2(S,\dd S)}
-E^{N}\|w\|^2_{L^2(\Sigma,\dd\Sigma)}-\dfrac{C_{3}}{r}\int_{S} \int_{0}^\delta  w(s,t)^2\dd t\, \dd S.
\end{multline}
Due to \eqref{E:DLtvarphi}, we have 
\[
\|w\|^2_{L^2(\Sigma,\dd\Sigma)} \geq (1-\|\partial_{t}\varphi\|_{\infty}\delta) \int_{S} \int_{0}^\delta  w(s,t)^2\dd t\, \dd S.
\]
We choose $r=3C_{3}/\alpha^2$ in \eqref{eq-hum2}, so that the asymptotic expansion \eqref{E:asymptoEbeta}
for $E^{N}$  provides a constant $C_{5}>0$ such that for $\alpha$ large enough: 
\begin{multline*}
h^-_\alpha(u,u)-E^{N}\|u\|^2_{L^2(\Sigma,\dd\Sigma)}
\ge(1-C_{1}\delta)\int_{S} g^{\rho \mu} \,\partial_\rho v \,\partial_\mu v\,\dd S-\dfrac{\psi(0)^2}{2} \langle v, Kv\rangle_{L^2(S,\dd S)}
\\ \quad-C_{5}(1+ \alpha e^{-\delta\alpha})\|v\|^2_{L^2(S,\dd S)}
+\frac{\alpha^2}{2}\int_{S} \int_{0}^\delta w(s,t)^2\dd t\, \dd S.
\end{multline*}
Therefore, the proof is concluded by setting $c_{0}^{-}=C_{1}$ and $c_{1}^{-}=C_{5}$. Noticing that $C_{4}$ and $C_{5}$ express with $C_{2}$ and $C_{3}$,
we deduce that the constants depends only on $\|L_{s}\|_{\infty}$, $\|\partial_{t}\varphi\|_{\infty}$ and $\|\partial_{t}^2\varphi\|_{\infty}$. 

\subsection{Asymptotics of $E_j$}
\label{SS:lb2}
The expression on the right-hand side  of \eqref{eq-hum3} can be viewed as a lower semibounded quadratic form defined on
$\cD(h^-_{\alpha})\subset L^2(\Sigma,\dd S \dd t)$.
Denote its closure in $L^2(\Sigma,\dd S \dd t)$ by $q$, and let $Q$ be the associated self-adjoint operator in
$L^2(\Sigma,\dd S \dd t)\equiv L^2(S,\dd S)\otimes L^2(0,\delta)$.
It writes as
\[
Q=\Big[-(1-c_{0}^{-}\delta)\Delta_S -\alpha K -c_{1}^{-}(1+\alpha e^{-2\delta\alpha}) \Big] P + \dfrac{\alpha^2}{2}\, (1-P),
\]
where $P:L^2(\Sigma,\dd S \dd t)\to L^2(S,\dd S)\otimes \psi$ is the orthogonal projector $(Pu)(s,t):=v(s)\psi(t)$ with $v$ defined in \eqref{eq-vv}.
For each fixed $j$ and large $\alpha$ we have 
\[
E_j\Big(-(1-c_{0}^{-}\delta)\Delta_S -\alpha K -c_{1}^{-}(1+\alpha e^{-2\delta\alpha})  \Big)=\cO(\alpha) < \dfrac{\alpha^2}{2},
\]
hence,
\[
E_j(Q)=E_j\Big(-(1-c_{0}^{-}\delta)\Delta_S -\alpha K -c_{1}^{-}(1+\alpha e^{-\delta\alpha}) \Big).
\]
Furthermore, using
\[
\|u\|^2_{L^2(\Sigma,d\Sigma)}\le (1+ \|\partial_{t}\varphi\|_{\infty} \delta) \|u\|^2_{L^2(\Sigma,\dd S\dd t)}
\]
we have a positive constant $C_{\varphi}$, depending only on $\|\partial_{t}\varphi\|_{\infty}$ such that
\[
\dfrac{h^-_\alpha(u,u)}{\|u\|^2_{L^2(\Sigma,d\Sigma)}}-E^{N}\\
\ge (1-C_{\varphi} \delta) \dfrac{q(u,u)}{\|u\|^2_{L^2(\Sigma,\dd S\dd t)}}.
\]
As the identification operator $f\mapsto f$ defines an injection 
of $\cD(h^-_\alpha)\subset L^2(\Sigma,d\Sigma)$ in $\cD(q)\subset L^2(\Sigma,\dd S \dd t)$,
it follows  that
\begin{multline}
        \label{eq-ee}
E_j(H^-_\alpha)\ge (1-C_{\varphi} \delta)E_j(Q)+E^{N}\\
=-\alpha^2+(1-C_{\varphi} \delta)E_j\Big(-(1-c_{0}^{-}\delta)\Delta_S -\alpha K\Big) +\cO(1+\alpha^2 e^{-\delta\alpha}),
\end{multline}
where we have used the asymptotics  \eqref{E:asymptoEbeta} for $E^N$. In addition, by Lemma \ref{L:Ejreduit}, we get
\begin{equation*}
(1-C_{\varphi} \delta) E_j\Big(-(1-c_{0}^{-}\delta)\Delta_S -\alpha K\Big)= E_j(-\Delta_S - \alpha K) +\cO(\delta\alpha).
\end{equation*}
Hence, by substituting in \eqref{eq-ee},
\[
 E_j(H^-_\alpha)\geq-\alpha^2 +E_j(-\Delta_S - \alpha K)
+\cO(1+\alpha^2 e^{-\delta\alpha}+\delta\alpha),
\]
and the constants depend only on $\|L_{s}\|_{\infty}$, $\|K\|_{\infty}$, $\|\partial_{t}\varphi\|_{\infty}$ and $\|\partial_{t}^2\varphi\|_{\infty}$.
Choosing
\begin{equation}
   \label{eq-dba}
\delta=\dfrac{b \log \alpha}{\alpha}, \quad b\ge 2,
\end{equation}
we arrive at the result.

\section{Proof of Theorem~\ref{thm2}}\label{th2pr}
The main idea for improving the remainder estimate
is to work in unweighted spaces from the very beginning.
The weight $\varphi$ is indeed $C^{1}$ with respect to the $s$ variable now,
and this allows for more precise Taylor expansions of $\varphi$ in $\Sigma$, so that the comparison
between the Robin Laplacian and the decoupled operator becomes more precise.
We start with the following simple result:
\begin{lemma}\label{lem23}
Under the assumption of Theorem~\ref{thm2}, for any fixed $j\in\NN$
one has
\[
E_j(-\Delta_S -\alpha K)=-\alpha K_{\max} + \cO(\alpha^{2/3})
\text{ as } \alpha\to+\infty.
\]
\end{lemma}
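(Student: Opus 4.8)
The plan is to obtain matching upper and lower bounds for $E_j(-\Delta_S-\alpha K)$ of the form $-\alpha K_{\max}+\cO(\alpha^{2/3})$ by a direct min-max argument, exploiting that under the hypotheses of Theorem~\ref{thm2} the domain is $C^3$-admissible (so the gradients $\nabla_s\kappa_i$, and hence $\nabla_s K$, are bounded) and that $K$ attains its maximum at some point $s_0\in S$. It is convenient to rewrite $-\Delta_S-\alpha K=-\Delta_S+\alpha(K_{\max}-K)-\alpha K_{\max}$ and estimate $E_j$ of the nonnegative-potential operator $A_\alpha:=-\Delta_S+\alpha W$ with $W:=K_{\max}-K\ge0$, $W(s_0)=0$, $\nabla_s W$ bounded; the claim becomes $E_j(A_\alpha)=\cO(\alpha^{2/3})$.

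For the upper bound I would build a $j$-dimensional trial space localized near $s_0$. Fix a normal-coordinate chart around $s_0$ of radius comparable to a small fixed constant; in these coordinates the metric is uniformly comparable to the Euclidean one. Because $\nabla_s W$ is bounded, $0\le W(s)\le C\,\mathrm{dist}(s,s_0)$ on this chart. Take $j$ functions of the form $\chi(\rho^{-1}(s-s_0))e_k(\rho^{-1}(s-s_0))$ where $\chi$ is a fixed smooth cutoff supported in the unit ball, $\rho=\rho(\alpha)$ is a scaling parameter to be optimized, and $e_1,\dots,e_j$ are fixed linearly independent smooth bumps (e.g. from a fixed orthonormal family) chosen so that after rescaling the trial functions are essentially orthogonal. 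For any such $u$ one has $\int|\nabla_S u|^2\,dS=\cO(\rho^{-2})\|u\|^2$ from the derivative landing on the $\rho$-rescaled argument, and $\alpha\int W|u|^2\,dS=\cO(\alpha\rho)\|u\|^2$ since $W=\cO(\rho)$ on the support. Hence the Rayleigh quotient on the trial space is $\cO(\rho^{-2}+\alpha\rho)$, and the min-max principle gives $E_j(A_\alpha)\le C(\rho^{-2}+\alpha\rho)$; optimizing with $\rho=\alpha^{-1/3}$ yields $E_j(A_\alpha)\le C\alpha^{2/3}$. One must check that the $j$ rescaled bumps remain linearly independent and that the metric distortion and the curvature of $S$ contribute only lower-order corrections, which is where the $C^3$-admissibility (bounded curvatures and their gradients) is used.

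For the lower bound it suffices to note $A_\alpha=-\Delta_S+\alpha W\ge -\Delta_S\ge0$ in the form sense, so $E_j(A_\alpha)\ge E_j(-\Delta_S)\ge0$; in particular $E_j(A_\alpha)=\cO(\alpha^{2/3})$ trivially has the right sign, and no refined lower bound is needed because the statement only asserts an $\cO(\alpha^{2/3})$ error, i.e. $E_j(-\Delta_S-\alpha K)+\alpha K_{\max}=E_j(A_\alpha)$ lies between $0$ and $C\alpha^{2/3}$. Thus combining the two bounds gives exactly
\[
E_j(-\Delta_S-\alpha K)=-\alpha K_{\max}+\cO(\alpha^{2/3}),\qquad\alpha\to+\infty.
\]
The main obstacle is the upper bound bookkeeping: making precise, in normal coordinates on the manifold $S$, that the rescaled bump functions form a genuine $j$-dimensional subspace with controlled Gram matrix and that the manifold's geometry (metric non-flatness, and the fact that $S$ may be noncompact, in which case one uses only a fixed neighborhood of $s_0$) only perturbs the Rayleigh quotient by $\cO(\rho^{-2}+\alpha\rho)$ as well; once the scaling $\rho=\alpha^{-1/3}$ is identified, the estimates themselves are routine.
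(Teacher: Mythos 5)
Your proposal is correct and follows essentially the same route as the paper: the lower bound comes from the positivity of $-\Delta_S$, and the upper bound from a $j$-dimensional trial space of functions concentrated at scale $r=\alpha^{-1/3}$ around a maximizer $s_0$ of $K$, using the Lipschitz bound $K\ge K_{\max}-C\,d(s,s_0)$ and optimizing $r^{-2}+\alpha r$. The only cosmetic difference is that the paper takes radial test functions $f_i\big(r^{-1}d(s,s_0)\big)$ with pairwise disjoint supports, which makes the form block-diagonal and removes the Gram-matrix bookkeeping you flag as the remaining obstacle.
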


\begin{proof}
Due to $(-\Delta_S)\ge 0$ we have the obvious lower bound $E_j (-\Delta_S -\alpha K)\ge -\alpha K_\mathrm{max}$.
Let us prove the upper bound. For $s,s_0\in S$, let $d(s,s_0)$ denote the geodesic distance between $s$ and $s_0$.
Let $s_0\in S$ be such that $K(s_0)=K_\mathrm{max}$. As $K$ is at least $C^1$, there exist
$\varepsilon>0$ and $C>0$ such that
\begin{equation}
 \label{eq-ktay}
K(s)\ge K_\mathrm{max} - C d(s,s_0) \text{ as }d(s,s_0)<\varepsilon. 
\end{equation}
Now let us choose $j$ functions $f_1,\dots, f_j \in C^\infty_c(\RR_+)$
having disjoint supports, non identically zero,
and set  $v_i(s)= f_i\big(r^{-1}d(s,s_0)\big)$, where $r>0$ is small and will be chosen later.
For small $r$, the functions $v_i$ have pairwise disjoint supports and belong to the domain of $\Delta_S$.
In particular, they are linearly independent,
and 
\[
\big\langle
v_i, (-\Delta_S-\alpha K)v_l\big\rangle=0 \text{ for } i\ne l.
\]
On the other hand, 
\[
\theta_i(r):=\int_S v_i(s)^2 \dd S = \int_S f_i\big( r^{-1}d(s,s_0)\big)^2 \dd S =
a_i r^{\nu-1} + o(r^{\nu-1}), \quad a_i>0.
\]
Using \eqref{eq-ktay} we have
\begin{multline*}
\big\langle v_i, (-\Delta_S-\alpha K)v_{i}\big\rangle
=\int_S g^{\rho\mu} \partial_\rho v_i \partial_\mu v_i \dd S -\alpha
\int_S K v_i^2 \dd S \\
\le b_i r^{\nu-3} -\alpha K_\mathrm{max} \theta_i(r) + c_i\alpha r \theta_i(r),
\quad b_i, \, c_i> 0,
\end{multline*}
which gives
\[
\dfrac{\big\langle v_i, (-\Delta_S-\alpha K)v_{i}\big\rangle}{\|v_i\|^2}
\le -\alpha K_\mathrm{max} + A_i (r^{-2} + \alpha r), \quad A_i>0.
\]
Now it is sufficient to take $r:=\alpha^{-1/3}$ and to test in
 \eqref{E:QR} on the subspace $L$ spanned by $v_1,\dots,v_j$.
\end{proof}

\subsection{Toward unweighted spaces}
In order to remove the weight $\varphi$, we perform the unitary transform 
\[
\Theta: L^2(\Sigma,\dd S\dd t)\ni u\mapsto \varphi^{-1/2}u \in L^2(\Sigma, \dd \Sigma),
\]
and consider the quadratic forms $u\mapsto h_{\alpha}^{\pm}(\Theta u,\Theta u)$ defined on $\Theta^{-1}\big(\cD(h_{\alpha}^{\pm})\big)\subset L^2(\Sigma,\dd S\dd t)$.
In order to reduce the analysis to decoupled operators, we prove approximation lemmas:
\begin{lemma}
There exists $\delta_{0}>0$ and positive  constants $C$ and $C'$ such that for all $\delta\in (0,\delta_{0})$ and $u\in \Theta^{-1}\big(\cD(h_{\alpha}^{\pm})\big)$
one has
\begin{multline}
\left|\int_{\Sigma}g^{\rho\mu}\partial_{\rho}(\varphi^{-1/2}u) \partial_{\mu}(\varphi^{-1/2}u)\varphi\dd S \dd t-\int_{\Sigma} g^{\rho\mu}\partial_{\rho}u\partial_{\mu}u\dd S \dd t \right|\\
\leq C\delta \int_{\Sigma} g^{\rho\mu}\partial_{\rho}u\partial_{\mu}u\dd S \dd t+C' \delta \| u \|_{L^2(\Sigma,\dd S \dd t)}^2,
 \end{multline}
and the constants depend only on $\|\varphi\|_{\infty}$ and $\|\nabla_{s}\partial_{t}(\varphi^{-1/2})\|_{\infty}$.  
\end{lemma}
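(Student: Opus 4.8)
The plan is to expand the tangential derivatives by the Leibniz rule and to exploit that $\varphi^{-1/2}$ equals $1$ on $S\times\{0\}$, so that its tangential gradient is of order $t=\cO(\delta)$ throughout $\Sigma$. First I would write $\partial_\rho(\varphi^{-1/2}u)=\varphi^{-1/2}\partial_\rho u+(\partial_\rho\varphi^{-1/2})u$, multiply by $\varphi$, and use the symmetry of $(g^{\rho\mu})$ to merge the two cross terms, obtaining
\[
\varphi\, g^{\rho\mu}\partial_\rho(\varphi^{-1/2}u)\,\partial_\mu(\varphi^{-1/2}u)
= g^{\rho\mu}\partial_\rho u\,\partial_\mu u
+2\varphi^{1/2}g^{\rho\mu}(\partial_\rho\varphi^{-1/2})\,u\,\partial_\mu u
+\varphi\, g^{\rho\mu}(\partial_\rho\varphi^{-1/2})(\partial_\mu\varphi^{-1/2})\,u^2 .
\]
Hence, after integration over $\Sigma$, the left-hand side of the asserted inequality is bounded by
\[
2\int_\Sigma \varphi^{1/2}\big|g^{\rho\mu}(\partial_\rho\varphi^{-1/2})\,u\,\partial_\mu u\big|\,\dd S\,\dd t
+\int_\Sigma \varphi\,\big|g^{\rho\mu}(\partial_\rho\varphi^{-1/2})(\partial_\mu\varphi^{-1/2})\big|\,u^2\,\dd S\,\dd t .
\]
Note that $\partial_t u$ plays no role, since the statement compares only the tangential parts of the two forms.

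The crucial input is the pointwise bound $\|\nabla_s(\varphi^{-1/2})(s,t)\|_{T_sS}\le \delta\,\|\nabla_s\partial_t(\varphi^{-1/2})\|_\infty$ on $\Sigma$. Indeed, by \eqref{eq-r1} we have $\varphi(s,0)=|\det I_s|=1$, so $s\mapsto \varphi^{-1/2}(s,0)$ is constant and $\nabla_s(\varphi^{-1/2})(s,0)=0$; then the fundamental theorem of calculus in $t$ together with the boundedness of $\nabla_s\partial_t(\varphi^{-1/2})$ on $\Sigma$ (available under the $C^3$-admissibility hypothesis of Theorem~\ref{thm2}, cf.\ Lemma~\ref{Linter} and \eqref{E:UBps}) and $t<\delta$ give the claim. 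Plugging this in, applying the Cauchy--Schwarz inequality for the metric $(g^{\rho\mu})$ to the cross term, then Young's inequality $2ab\le a^2+b^2$ with $a=\|\nabla_s u\|_g$, and using $\varphi^{1/2}\le\|\varphi\|_\infty^{1/2}$, the cross term is bounded by $\|\varphi\|_\infty^{1/2}\|\nabla_s\partial_t(\varphi^{-1/2})\|_\infty\,\delta\big(\int_\Sigma g^{\rho\mu}\partial_\rho u\,\partial_\mu u\,\dd S\,\dd t+\int_\Sigma u^2\,\dd S\,\dd t\big)$; the last ($u^2$) term is bounded by $\|\varphi\|_\infty\|\nabla_s\partial_t(\varphi^{-1/2})\|_\infty^2\,\delta^2\int_\Sigma u^2\,\dd S\,\dd t$, which for $\delta<\delta_0$ is at most $C'\delta\int_\Sigma u^2\,\dd S\,\dd t$. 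Collecting the estimates yields the lemma with $C,C'$ depending only on $\|\varphi\|_\infty$ and $\|\nabla_s\partial_t(\varphi^{-1/2})\|_\infty$.

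One preliminary remark is that all the manipulations above are legitimate: since $\varphi^{\pm1/2}$ is bounded on $\Sigma$ with bounded first derivatives under the admissibility assumptions, multiplication by $\varphi^{\pm1/2}$ preserves $H^1(\Sigma)$ and the condition $u(\cdot,\delta)=0$, so both $u$ and $\varphi^{-1/2}u$ lie in the relevant form domains and the Leibniz rule applies. The only genuine obstacle is the order-$\delta$ estimate on $\nabla_s(\varphi^{-1/2})$ — once the vanishing of the tangential gradient at $t=0$ and the $C^3$-regularity bound are invoked, the rest is routine Cauchy--Schwarz/Young bookkeeping.
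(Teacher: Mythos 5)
Your proposal is correct and follows essentially the same route as the paper: the identical Leibniz expansion of the weighted tangential form, the same key observation that $\varphi(s,0)=1$ forces $\nabla_s(\varphi^{-1/2})=\cO(\delta)$ on $\Sigma$ (the paper phrases this via the expansion $\varphi^{-1/2}=1+tA$ with $A$, $\nabla_s A$ bounded, you via the fundamental theorem of calculus in $t$ — the same fact), followed by Cauchy--Schwarz for the metric $(g^{\rho\mu})$ and a Young-type splitting of the cross term. The only cosmetic difference is that the paper applies Young with weights $\delta^{-1},\delta$ before inserting the gradient bound, while you insert the factor $\delta$ first and use the plain inequality $2ab\le a^2+b^2$; both yield the stated $\cO(\delta)$ coefficients with the same dependence of the constants.
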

\begin{proof}
We compute 
\begin{multline}
\label{E:developpegrhomu}
\int_{\Sigma}g^{\rho\mu}\partial_{\rho}(\varphi^{-1/2}u) \partial_{\mu}(\varphi^{-1/2}u)\varphi\dd S \dd t-\int_{\Sigma} g^{\rho\mu}\partial_{\rho}u \partial_{\mu}u \dd S \dd t\\
=\int_{\Sigma}g^{\rho\mu}\partial_{\rho}\varphi^{-1/2}\partial_{\mu}\varphi^{-1/2}\varphi u^2 \dd S \dd t+2 \int g^{\rho\mu}\partial_{\rho}\varphi^{-1/2}u\varphi^{1/2}\partial_{\mu}u \dd S \dd t.
\end{multline}
Due to \eqref{eq-r1}, we have the expansion
\[
\varphi(s,t)^{-1/2}=1+tA(s,t),
\]
where $A$ and its gradient are bounded. In particular, there exists $C_{0}>0$ with
\begin{equation}
\label{E:Mphis}
 \|\nabla_{s}\varphi^{-1/2}\|_{\infty} \leq C_{0} \delta,
\end{equation}
where $C_{0}$ is controlled by $\|\partial_{t}\nabla_{s}\varphi^{-1/2}\|_\infty$, see \eqref{E:UBps}.
We deduce that 
\begin{equation}
\label{E:MT1delta2}
\left|\int_{\Sigma}g^{\rho\mu}\partial_{\rho}\varphi^{-1/2}\partial_{\mu}\varphi^{-1/2}\varphi u^2 \dd S \dd t\right|
\leq
 C_{1}\delta^2 \|u\|^2_{L^2(\Sigma,\dd S\dd t)},
 \end{equation}
where the constant $C_{1}$ is controlled by $\|\nabla_{s}\partial_{t}\varphi^{-1/2}\|_{\infty}$ and $\|\varphi\|_{\infty}$. 
Using the Cauchy-Schwarz inequality for the metric $(g^{\rho\mu})$, we get
\begin{multline*}
\left|2\int_{\Sigma} g^{\rho\mu}\left(\partial_{\rho}\varphi^{-1/2}u\right)\left(\varphi^{1/2}\partial_{\mu}u\right) \dd S \dd t\right|\\
\begin{aligned}
& \leq 2\left( \int_{\Sigma}g^{\rho\mu}\partial_{\rho}\varphi^{-1/2}\partial_{\mu}\varphi^{-1/2}u^2  \dd S \dd t \int_{\Sigma}g^{\rho\mu}\partial_{\rho}u\partial_{\mu}u \varphi \dd S \dd t\right)^{1/2}
\\
& \leq \delta^{-1}\int_{\Sigma}g^{\rho\mu}\partial_{\rho}\varphi^{-1/2}\partial_{\mu}\varphi^{-1/2}u^2  \dd S \dd t+\delta\int_{\Sigma}g^{\rho\mu}\partial_{\rho}u\partial_{\mu}u \varphi \dd S \dd t\\
& \leq \delta^{-1} \|\nabla_s \varphi^{-1/2}\|_\infty^2
\int_{\Sigma}u^2  \dd S \dd t+\delta\int_{\Sigma}g^{\rho\mu}\partial_{\rho}u\partial_{\mu}u \varphi \dd S \dd t
\\
& \leq C_{1}\delta \|u\|^2_{L^2(\Sigma,\dd S\dd t)}+C_{2}\delta \int_{\Sigma}g^{\rho\mu}\partial_{\rho}u\partial_{\mu}u \dd S \dd t;
\end{aligned}
\end{multline*}
on the last step we have used \eqref{E:Mphis}.
We deduce the lemma by combining the last inequality with \eqref{E:developpegrhomu} and \eqref{E:MT1delta2}. \Gr Since $C_{2}$ is controlled by $\|\varphi\|_{\infty}$, we deduce that the constants depends only on $\|\varphi\|_{\infty}$ and $\|\nabla_{s}\partial_{t}(\varphi^{-1/2})\|_{\infty}$. \Bk
\end{proof}
\begin{lemma}
There exists $\delta_{0}>0$ and positive constants $C$ and $\beta$ such that for all $\delta\in (0,\delta_{0})$
there holds
\begin{multline}
\label{E:TUS1}
\int_{\Sigma}|\partial_{t}(\varphi^{-1/2}u)|^2 \varphi\dd S \dd t
\geq 
\int_{\Sigma}|\partial_{t}u|^2 \dd S \dd t-\int_{S}\frac{K(s)}{2}\,u(s,0)^2 \dd S\\
-\beta \int_{S}u(s,\delta)^2 \dd S-C\|u\|^2_{L^2(\Sigma,\dd S \dd t)}
\text{ for all } u\in \Theta^{-1}\big(\cD(h_{\alpha}^{-})\big)
\end{multline}
and
\begin{multline}
\label{E:TUS2}
\int_{\Sigma}\big|\partial_{t}(\varphi^{-1/2}u)\big|^2 \varphi\dd S \dd t
\leq 
\int_{\Sigma}|\partial_{t}u|^2 \dd S \dd t -\int_{S}\frac{K(s)}{2}u(s,0)^2 \dd S\\+C\|u\|^2_{L^2(\Sigma,\dd S \dd t)}
\text{ for all }  u\in \Theta^{-1}\big(\cD(h_{\alpha}^{+})\big),
\end{multline}
and the constants depend on $\|\partial_{t}(\varphi^{-1/2})\|_{\infty}$, $\|\varphi^{1/2}\|_{\infty}$ and $\|\partial_{t}(\varphi^{1/2}\partial_{t}\varphi^{-1/2})\|_{\infty}$ only.
\end{lemma}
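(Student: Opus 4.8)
The plan is to expand $\partial_t(\varphi^{-1/2}u)$, multiply by the weight $\varphi$, and integrate by parts in the normal variable $t$; the point is that all the relevant structure is one-dimensional in $t$ and the metric on $S$ plays no role.

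\textbf{Step 1: a pointwise identity.} Writing $\partial_t(\varphi^{-1/2}u)=u\,\partial_t\varphi^{-1/2}+\varphi^{-1/2}\partial_tu$, squaring, and using $2u\,\partial_t u=\partial_t(u^2)$ together with $\varphi^{-1/2}\varphi^{1/2}=1$, one obtains the pointwise identity on $\Sigma$ (recall $\varphi>0$ on $\Sigma$ for small $\delta$ by (H1))
\[
\varphi\,\big|\partial_t(\varphi^{-1/2}u)\big|^2=(\partial_t u)^2+\varphi\,(\partial_t\varphi^{-1/2})^2\,u^2+\varphi^{1/2}(\partial_t\varphi^{-1/2})\,\partial_t(u^2).
\]

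\textbf{Step 2: integration by parts.} For a.e.\ $s\in S$ the map $t\mapsto u(s,t)$ lies in $H^1(0,\delta)$, and $\varphi^{1/2}\partial_t\varphi^{-1/2}$ and $\partial_t(\varphi^{1/2}\partial_t\varphi^{-1/2})$ are bounded on $\Sigma$ by Lemma~\ref{Linter}; integrating the last term of the identity over $(0,\delta)$ (first for smooth $u$, then by density) gives
\[
\int_0^\delta\varphi^{1/2}(\partial_t\varphi^{-1/2})\,\partial_t(u^2)\,\dd t=\Big[\varphi^{1/2}(\partial_t\varphi^{-1/2})u^2\Big]_{t=0}^{t=\delta}-\int_0^\delta\partial_t\big(\varphi^{1/2}\partial_t\varphi^{-1/2}\big)u^2\,\dd t.
\]
The crucial point is the value at $t=0$: from \eqref{eq-r1} one has $\varphi(s,0)=1$ and $\varphi^{-1/2}(s,t)=1+\tfrac{K(s)}{2}t+\cO(t^2)$, hence $(\varphi^{1/2}\partial_t\varphi^{-1/2})(s,0)=\tfrac{K(s)}{2}$, which is exactly the coefficient appearing in \eqref{E:TUS1}--\eqref{E:TUS2}. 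Integrating also over $S$ yields
\begin{multline*}
\int_{\Sigma}\varphi\,\big|\partial_t(\varphi^{-1/2}u)\big|^2\dd S\dd t=\int_{\Sigma}(\partial_tu)^2\dd S\dd t+\int_{\Sigma}\varphi\,(\partial_t\varphi^{-1/2})^2u^2\dd S\dd t\\
+\int_{S}(\varphi^{1/2}\partial_t\varphi^{-1/2})(s,\delta)\,u(s,\delta)^2\dd S-\int_{S}\tfrac{K(s)}{2}u(s,0)^2\dd S-\int_{\Sigma}\partial_t\big(\varphi^{1/2}\partial_t\varphi^{-1/2}\big)u^2\dd S\dd t.
\end{multline*}

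\textbf{Step 3: the two cases.} For \eqref{E:TUS1}, where $u\in\Theta^{-1}(\cD(h_\alpha^-))$ and no condition holds at $t=\delta$, I drop the nonnegative term $\int_\Sigma\varphi(\partial_t\varphi^{-1/2})^2u^2\ge0$, bound the $t=\delta$ boundary term from below by $-\beta\int_Su(s,\delta)^2\dd S$ with $\beta:=\|\varphi^{1/2}\partial_t\varphi^{-1/2}\|_\infty\le\|\varphi^{1/2}\|_\infty\|\partial_t\varphi^{-1/2}\|_\infty$, and bound the last term from below by $-\|\partial_t(\varphi^{1/2}\partial_t\varphi^{-1/2})\|_\infty\|u\|^2_{L^2(\Sigma,\dd S\dd t)}$. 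For \eqref{E:TUS2}, where $u\in\Theta^{-1}(\cD(h_\alpha^+))$, the relations $\varphi^{-1/2}u(\cdot,\delta)=0$ and $\varphi^{-1/2}(s,\delta)\ne0$ force $u(\cdot,\delta)=0$, so the $t=\delta$ term vanishes; then $\int_\Sigma\varphi(\partial_t\varphi^{-1/2})^2u^2\le\|\varphi^{1/2}\|_\infty^2\|\partial_t\varphi^{-1/2}\|_\infty^2\|u\|^2$ and the last term is again $\cO(\|u\|^2)$. In both cases the constants depend only on $\|\partial_t(\varphi^{-1/2})\|_\infty$, $\|\varphi^{1/2}\|_\infty$ and $\|\partial_t(\varphi^{1/2}\partial_t\varphi^{-1/2})\|_\infty$, all finite by Lemma~\ref{Linter} and \eqref{E:DLtvarphi}.

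\textbf{Main obstacle.} There is little analytic difficulty — the argument is in essence a one-dimensional integration by parts — so the points that actually need care are: justifying the integration by parts by a density argument (here $C^3$-admissibility is what ensures that multiplication by $\varphi^{\pm1/2}$ preserves the $H^1$-type form domains, via boundedness of $\nabla_s\varphi^{-1/2}$); reading off the boundary coefficient $\tfrac{K(s)}{2}$ from \eqref{eq-r1}; and noticing that the one genuinely uncontrolled leftover $\int_\Sigma\varphi(\partial_t\varphi^{-1/2})^2u^2$ has a favorable sign, so that it can simply be discarded for the lower bound \eqref{E:TUS1}, whereas for the upper bound \eqref{E:TUS2} it must be absorbed into the $\cO(\|u\|^2)$ error (harmless since its integrand is $\cO(1)$).
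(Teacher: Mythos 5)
Your proof is correct and follows essentially the same route as the paper: expand the square, integrate the cross term $\varphi^{1/2}(\partial_t\varphi^{-1/2})\partial_t(u^2)$ by parts in $t$, read off the boundary coefficient $K(s)/2$ at $t=0$ from \eqref{eq-r1}, control the $t=\delta$ term by $\beta$, and use $u(\cdot,\delta)=0$ for the Dirichlet-type domain. The only (harmless) deviation is that you discard the nonnegative term $\int_\Sigma\varphi(\partial_t\varphi^{-1/2})^2u^2$ for the lower bound, whereas the paper simply absorbs it into the $\cO(\|u\|^2)$ remainder in both directions.
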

\begin{proof}
We have 
\begin{multline}
\label{E:developparttUv}
\int_{\Sigma} |\partial_{t}(\varphi^{-1/2}u)|^2 \varphi\dd S \dd t  -\int_{\Sigma} |\partial_{t}u|^2 \dd S \dd t\\
=
\int_{\Sigma} |\partial_{t}(\varphi^{-1/2})|^2 u^2 \varphi\dd S \dd t+2\int_{\Sigma}\partial_{t}(\varphi^{-1/2})\varphi^{1/2}u\partial_{t}u \dd S \dd t,
\end{multline}
and there exists $C_{0}>0$ such that
\begin{equation}
\label{E:MajT1L3}
\left| \int_{\Sigma} \big|\partial_{t}(\varphi^{-1/2})\big|^2 u^2 \varphi\dd S \dd t\right|
\leq
C_{0}\|u\|^2_{L^2(\Sigma,\dd S \dd t)}.
\end{equation}
The second term is treated by an integration by parts: 
\begin{multline}
\label{E:ipptermedt} 
2\int_{\Sigma}\partial_{t}(\varphi^{-1/2})\varphi^{1/2}u\partial_{t}u \dd S \dd t
=
\int_{S}\int_{0}^{\delta}\partial_{t}(\varphi^{-1/2}) \varphi^{1/2}\partial_{t}(u^2)\dd t \dd S
\\ =
\int_{S}\Big[ \partial_{t}(\varphi^{-1/2})\varphi^{1/2}u^2 \Big]_{t=0}^{t=\delta}\dd S-\int_{\Sigma}\partial_{t}(\partial_{t}(\varphi^{-1/2})\varphi^{1/2})u^2 \dd S \dd t.
\end{multline}
Due to \eqref{eq-r1}, we have the expansion 
\[
\partial_{t}(\varphi^{-1/2})(s,t)
=
\frac{K(s)}{2}+tQ(s,t),
\]
where $Q$ is bounded in $\Sigma$, so that $\partial_{t}(\varphi^{-1/2})(s,0)=K(s)/2$, and \eqref{E:ipptermedt} provides
\[
\left|2\int_{\Sigma}\partial_{t}(\varphi^{-1/2})\varphi^{1/2}u\partial_{t}u \dd S \dd t + \int_{S}\dfrac{K(s)}{2}u(s,0)^2 \dd S \right|
 \leq 
C_{1}\|u\|^2_{L^2(\Sigma,\dd s \dd t)}+\beta \int_{S}u(s,\delta)^2 \dd S,
\]
where
\[
\beta=\sup_{s\in S}\Big|(\varphi^{1/2}\partial_{t}\varphi^{-1/2})(s,\delta)\Big|.
\]
By combining this with \eqref{E:developparttUv} and \eqref{E:MajT1L3}, we deduce the lower bound \eqref{E:TUS1}, and also the upper bound \eqref{E:TUS2} since $u(s,\delta)=0$ for $u\in \Theta^{-1}\big(\cD(h_{\alpha}^{+})\big)$.
 Moreover, the constant $C_{0}$ is controlled by $\|\partial_{t}\varphi^{-1/2}\|_{\infty}^2$, the constant $\beta$ by $\|\partial_{t}\varphi^{-1/2}\varphi^{1/2}\|_{\infty}$ and the constant $C_{1}$ by $\|\partial_{t}(\partial_{t}\varphi^{-1/2}\varphi^{1/2})\|_{\infty}$.
\end{proof}

We deduce by combining the last two lemmas that there exist positive constants $c_{0}$ and $c_{1}$ such that for all $\delta \in (0,\delta_{0})$:
\begin{equation}
\label{D:Qam}
\begin{aligned}
h_{\alpha}^{-}(\Theta u, \Theta u) 
&\geq
(1-c_{0}\delta)\int_{\Sigma}g^{\rho\mu}\partial_{\rho}u\partial_{\mu}u \dd S \dd t+\int_{\Sigma}|\partial_{t}u|^2 \dd S \dd t
\\
&\quad-\int_{S}\left(\alpha+\frac{K}{2}\right)u(s,0)^2 \dd S-\beta \int_{S}u(s,\delta)^2 \dd S-c_{1}\|u\|^2_{L^2(\Sigma,\dd S\dd t}\\
&\quad \text{ for } u\in \Theta^{-1}(\cD(h_{\alpha}^{-})\big),
\end{aligned}
\end{equation}
and
\begin{equation}
\label{D:Qap}
\begin{aligned}
h_{\alpha}^{+}(\Theta u, \Theta u) 
&\leq
(1+c_{0}\delta)\int_{\Sigma}g^{\rho\mu}\partial_{\rho}u\partial_{\mu}u \dd S \dd t+\int_{\Sigma}|\partial_{t}u|^2 \dd S \dd t\\
&\quad-\int_{S}\left(\alpha+\frac{K}{2}\right)u(s,0)^2 \dd S+c_{1}\|u\|^2_{L^2(\Sigma,\dd S \dd t)} \text{ for } u\in \Theta^{-1}\big(\cD(h_{\alpha}^{+})\big).
\end{aligned}
\end{equation}
We denote by $q_{\alpha}^{\pm}$ the quadratic forms on the right-hand side of \eqref{D:Qam} and \eqref{D:Qap} respectively, defined on the form domains $\cD(q_{\alpha}^{\pm}):=\Theta^{-1}(\cD(h_{\alpha}^{\pm}))$. The associated self-adjoint operators, both acting on the unweighted space $L^2(\Sigma, \dd S \dd t)$, will be denoted by $Q_{\alpha}^{\pm}$. Due to \eqref{eq-est2} one has
\begin{equation}
       \label{eq-est3}
E_j(Q_{\alpha}^{-})\le E_j(Q^\Omega_\alpha)\le E_j(Q_{\alpha}^{+})
\text{ for all $j$ with } E_j(Q_{\alpha}^{+})<0.
\end{equation}

\subsection{Upper bound}
Once again we estimate the quadratic form $q_{\alpha}^{+}$ evaluated on the functions $u$ that write as a product
$u(s,t)=v(s)\psi(t)$, where $\psi$ is a normalized eigenfunction of $T^{D}$ associated with $E^{D}$ (see Lemma~\ref{lemd})
and $v\in H^1(S)$. Here we have simply $\|u\|_{L^2(\Sigma,\dd S\dd t)}=\|v\|^2_{L^2(S,\dd S)}$, and
\[
q_{\alpha}^{+}(u,u)-E^{D}\|u\|^2_{L^2(\Sigma,\dd S\dd t)} =(1+c_{0}\delta)\int_{S}g^{\rho\mu}\partial_{\rho}v\partial_{\mu}v \dd S-\frac{\psi(0)^2}{2}\int_{S}K v^2 \dd S+c_{1}\|v\|_{L^2(S,\dd S)}^2.
\]
Using  \eqref{E:estimeED} we obtain 
\begin{equation}
\label{E:estimEmEDbis}
E_j(Q^+_\alpha)
\leq -\alpha^2+
 E_j\big( -(1+c_{0}\delta)\Delta_S-\alpha K \big)+\cO(1+\alpha e^{-\delta\alpha}).
\end{equation}
To estimate the right-hand side of \eqref{E:estimEmEDbis} we need an additional assertion:
\begin{lemma}
\label{L:vpopC3}
For any $j\in \NN$ there exist positive constants $C$, $\alpha_{0}$ and $\delta_0$
such that for  $\delta\in(0,\delta_0)$ and $\alpha\geq\alpha_{0}$ the following inequalities hold:
\begin{align}
\label{E:ubvpopC3}
E_j\big( -(1+c_{0}\delta)\Delta_S-\alpha K \big) &\leq E_j\big( -\Delta_S-\alpha K \big)+C\delta \alpha^{2/3},\\
\label{E:lbvpopC3}
E_j\big( -(1-c_{0}\delta)\Delta_S-\alpha K \big) &\geq E_j\big( -\Delta_S-\alpha K \big)-C\delta \alpha^{2/3}.
\end{align}
\end{lemma}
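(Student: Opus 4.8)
The plan is to reduce the inequalities to a comparison between the eigenvalues of $-(1\pm c_0\delta)\Delta_S-\alpha K$ and those of $-\Delta_S-\alpha K$, and to control the discrepancy using the a priori bound on the Rayleigh quotients established in Lemma~\ref{lem23}. First I would rewrite, exactly as in the proof of Lemma~\ref{L:Ejreduit},
\[
E_j\big(-(1+c_0\delta)\Delta_S-\alpha K\big)=E_j\big(-(1+c_0\delta)\Delta_S+\alpha(K_{\mx}-K)\big)-\alpha K_{\mx},
\]
and similarly for the lower-bound case. Since $K_{\mx}-K\ge 0$, the operator $-(1+c_0\delta)\Delta_S+\alpha(K_{\mx}-K)$ dominates $-\Delta_S+\alpha(K_{\mx}-K)$ in the form sense, while $-(1-c_0\delta)\Delta_S+\alpha(K_{\mx}-K)$ is dominated by it; hence, by the min-max principle, $E_j$ of the former is $\ge$ (resp.\ $\le$) $E_j\big(-\Delta_S+\alpha(K_{\mx}-K)\big)$. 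This gives one side of each inequality for free.

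For the other side I would use the homogeneity trick already exploited in Lemma~\ref{L:Ejreduit}: bounding $-(1+c_0\delta)\Delta_S+\alpha(K_{\mx}-K)\le(1+c_0\delta)\big(-\Delta_S+\alpha(K_{\mx}-K)\big)$ (valid since $\alpha(K_{\mx}-K)\ge 0$ and $1+c_0\delta\ge 1$), so that
\[
E_j\big(-(1+c_0\delta)\Delta_S+\alpha(K_{\mx}-K)\big)\le (1+c_0\delta)\,E_j\big(-\Delta_S+\alpha(K_{\mx}-K)\big),
\]
and symmetrically $E_j\big(-(1-c_0\delta)\Delta_S+\alpha(K_{\mx}-K)\big)\ge(1-c_0\delta)E_j\big(-\Delta_S+\alpha(K_{\mx}-K)\big)$. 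Subtracting $\alpha K_{\mx}$ and using $E_j(-\Delta_S-\alpha K)=-\alpha K_{\mx}+E_j\big(-\Delta_S+\alpha(K_{\mx}-K)\big)$ turns both estimates into
\[
\big|E_j\big(-(1\pm c_0\delta)\Delta_S-\alpha K\big)-E_j(-\Delta_S-\alpha K)\big|\le c_0\delta\,E_j\big(-\Delta_S+\alpha(K_{\mx}-K)\big)=c_0\delta\big(E_j(-\Delta_S-\alpha K)+\alpha K_{\mx}\big).
\]

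The main point — and the reason this lemma comes after Lemma~\ref{lem23} rather than being a restatement of Lemma~\ref{L:Ejreduit} — is that here we need the sharper remainder $\cO(\delta\alpha^{2/3})$ instead of $\cO(\delta\alpha)$. This is exactly where Lemma~\ref{lem23} enters: under the hypotheses of Theorem~\ref{thm2} ($C^3$-admissibility and $K$ attaining its maximum) one has $E_j(-\Delta_S-\alpha K)+\alpha K_{\mx}=E_j(-\Delta_S-\alpha K)+\alpha K_{\max}=\cO(\alpha^{2/3})$, so the right-hand side above is $\cO(\delta\alpha^{2/3})$, which gives precisely \eqref{E:ubvpopC3} and \eqref{E:lbvpopC3}. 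I expect the only mild subtlety to be bookkeeping the direction of all the form inequalities (one must make sure each $\le$/$\ge$ is applied in the sense consistent with the min-max monotonicity of $E_j$), but there is no genuine obstacle: the argument is a one-line application of min-max monotonicity plus the scaling bound, fed with the quantitative estimate of Lemma~\ref{lem23}.
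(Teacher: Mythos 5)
Your proposal is correct and follows essentially the same route as the paper: rewrite the operator via $-\alpha K=\alpha(K_{\mx}-K)-\alpha K_{\mx}$, use the homogeneity/monotonicity trick of Lemma~\ref{L:Ejreduit} to absorb the factor $1\pm c_{0}\delta$ at the price of a term $c_{0}\delta\,E_j\big(-\Delta_S+\alpha(K_{\mx}-K)\big)$, and then invoke Lemma~\ref{lem23} to see that this term is $\cO(\alpha^{2/3})$. The only cosmetic difference is that the paper pulls the factor out first and compares the couplings $\alpha/(1+c_0\delta)$ and $\alpha$, while you compare the potentials before factoring; these manipulations are equivalent.
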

\begin{proof}
We only prove the upper bound, the lower bound being symmetric. We have 
\begin{align*}
E_j\big( -(1+c_{0}\delta)\Delta_S-\alpha K \big)
&=
(1+c_{0}\delta)E_{j}\big( -\Delta_S+\frac{1}{1+c_{0}\delta}\alpha (K_{\max}-K) \big)-\alpha K_{\max}
\\ & \leq
(1+c_{0}\delta)E_{j}\big( -\Delta_S+\alpha (K_{\max}-K) \big)-\alpha K_{\max}
\\ &=
E_{j}\big( -\Delta_S-\alpha K \big)+c_{0}\delta E_{j}\big(-\Delta_S+\alpha (K_{\max}-K)\big),
\end{align*}
and it is sufficient to apply Lemma~\ref{lem23} to the last term.
\end{proof}
Now let us assume that $\delta$ is a function of $\alpha$ satisfying \eqref{eq-dada}. 
Applying Lemma~\ref{L:vpopC3} to \eqref{E:estimEmEDbis} we deduce, for $\alpha\to+\infty$,
\begin{equation}
\label{E:estimEimproved}
E_j(Q^+_\alpha)
\le
 -\alpha^2+ 
 E_j\big( -\Delta_S-\alpha K \big)+\cO(\delta\alpha^{2/3}+1+\alpha^2 e^{-\delta\alpha}).
\end{equation}
Choosing $\delta=\alpha^{-\kappa}$ with $\kappa\in [2/3,1)$ and using \eqref{eq-est3} we obtain the result.

\subsection{Lower bound}
Similarly to Section \ref{SS:lb1}, we decompose any function $u\in \cD(q_{\alpha}^{-})$ as 
\[
u(s,t)=v(s)\psi(t)+w(s,t),
\]
where $\psi=\psi^{\beta}$ is a normalized eigenfunction of the operator $T^{\beta}$ associated with the first eigenvalue $E^{\beta}$, see Lemma \ref{lemn}, and 
\[
v(s)=\int_{0}^{\delta}\psi(t)u(s,t) \dd t.
\]
It follows that
 \begin{equation}
 \label{E:wperppsi}
\int_{0}^{\delta}\psi(t)w(s,t) \dd t=0, \quad s\in S, 
 \end{equation}
 which provides
  \begin{equation}
 \label{E:crossT01}
 \int_{\Sigma}v(s) \psi(t) w(s,t)\dd S \dd t=0 \text{ and } \int_{\Sigma}g^{\rho\mu}\partial_{\rho}v(s)\psi(t)\partial_{\mu}w(s,t) \dd S\dd t=0.
 \end{equation}
A direct computation provides 
 \begin{equation}
 \label{E:DevLBI}
\begin{aligned}
 q_{\alpha}^{-}(u,u)=&\,(1-c_{0}\delta)\int_{S}g^{\rho\mu}\partial_{\rho}v\partial_{\mu}v \dd S +(1-c_{0}\delta)\int_{\Sigma}g^{\rho\mu}\partial_{\rho}w(s,t)\partial_{\mu}w(s,t) \dd S \dd t
 \\
 &+\int_{\Sigma}v(s)^{2}\psi'(t)^2 \dd S \dd t+2 \int_{\Sigma}v(s)\psi'(t)\partial_{t}w(s,t) \dd S \dd t
\\
&+\int_{\Sigma}|\partial_{t}w(s,t)|^2 \dd S \dd t  -\int_{S}\Big(\alpha+\dfrac{K(s)}{2}\Big) v(s)^2\psi(0)^2 \dd S\\
&-2 \int_{S}\Big(\alpha+\dfrac{K(s)}{2}\Big) v(s)\psi(0)w(s,0) \dd S - \int_{S}\Big(\alpha+\dfrac{K(s)}{2}\Big)w(s,0)^2 \dd S
 \\
 &-\beta\int_{S}v(s)^2\psi(\delta)^2 \dd S-2\beta\int_{S} v(s)\psi(\delta)w(s,\delta) \dd S-\beta \int_{S}w(s,\delta)^2 \dd S\\
&-c_{1}\|u\|^2_{L^2(\Sigma,\dd S \dd t)},
\end{aligned}
 \end{equation}
where we have used \eqref{E:crossT01} to get rid of the crossed terms. We also have, using an integration by part: 
\begin{multline}
\label{E:ippcrossedT}
\int_{\Sigma} v(s) \psi'(t)\partial_{t}w(s,t) \dd S \dd t=\int_{S}v(s)\left(\Big[\psi'(t) w(s,t)\Big]_{t=0}^{t=\delta}-\int_{0}^{\delta}\psi''(t) w(s,t) \dd t\right) \dd S
\\
=\int_{S}v(s)\left(\psi'(\delta) w(s,\delta)-\psi'(0)w(s,0)+E^{\beta}\int_{0}^{\delta}\psi(t) w(s,t) \dd t \right) \dd S
\\
=\int_{S}\Big(\beta v(s) \psi(\delta) w(s,\delta)+\alpha v(s)\psi(0)w(s,0) \Big)\dd S,
\end{multline}
where we have used \eqref{E:crossT01} and the boundary condition for the eigenfunction $\psi$. Moreover,
by the definition of $\psi$ we have
\[
\int_0^\delta \psi'(t)^2 \dd t-\alpha \psi(0)^2-\beta \psi(\delta)^2=E^{\beta}.
\]
Inserting the last inequality and \eqref{E:ippcrossedT} into \eqref{E:DevLBI}, we arrive at
 \begin{equation}
 \label{E:DevLBII}
\begin{aligned}
 q_{\alpha}^{-}(u,u)=&\,(1-c_{0}\delta)\int_{S}g^{\rho\mu}\partial_{\rho}v\partial_{\mu}v \dd S+(1-c_{0}\delta)\int_{\Sigma}g^{\rho\mu}\partial_{\rho}w\partial_{\mu}w \dd S \dd t
 \\
&+\int_{\Sigma}|\partial_{t}w|^2 \dd S \dd t-  \int_{S}K(s)v(s)\psi(0)w(s,0) \dd S
\\
& - \int_{S}\big(\alpha+\dfrac{K}{2}\big) w(s,0)^2 \dd S-\int_{S}\dfrac{K}{2} v(s)^2 \psi(0)^2 \dd S\\
&-\beta \int_{S}w(s,\delta)^2 \dd S+E^{\beta}\|v\|^2_{L^2(S,\dd S)}-c_{1}\|u\|^2_{L^2(\Sigma,\dd S \dd t)}.
\end{aligned}
\end{equation}
  \begin{lemma}\label{lem65}
  There exist $R>0$ and $\alpha_{0}>0$ such that for all $\alpha\geq\alpha_{0}$ there holds
 \begin{multline}
 \label{E:lb1000par}
 \int_{\Sigma}|\partial_{t}w(s,t)|^2 \dd S \dd t-  \int_{S}K(s)v(s)\psi(0)w(s,0) \dd S\\
- \int_{S}\Big(\alpha+\dfrac{K(s)}{2}\Big)w(s,0)^2 \dd S  -\int_{S}\dfrac{K(s)}{2} v(s)^2\psi(0)^2 \dd S-\beta \int_{S}w(s,\delta)^2 \dd S
\\
\geq -\frac{\alpha^2}{2}\| w \|^2_{L^2(\Sigma,\dd S\dd t)}-\psi(0)^2\int_S\Big(\dfrac{K(s)}{2}+\dfrac{R}{\alpha}\Big) v(s)^2\dd S
  \end{multline}
	for all $u\in \cD(q^-_\alpha)$. Moreover, the constant $R$ depends only on $\|K\|_{\infty}$.
 \end{lemma}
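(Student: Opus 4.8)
The plan is to localise the five terms of \eqref{E:lb1000par} fibre by fibre over $S$ and to combine the one-dimensional Sobolev trace inequality of Lemma~\ref{lemsob} with the quadratic-form positivity coming from the orthogonality $w(s,\cdot)\perp\psi$ in $L^2(0,\delta)$, recorded in \eqref{E:wperppsi} and exploited through \eqref{eq-fpsi}. First I would dispose of the crossed and zero-order terms: for any $\epsilon>0$, Young's inequality gives
\[
\Big|\int_S K(s)v(s)\psi(0)w(s,0)\,\dd S\Big|\le \epsilon\int_S w(s,0)^2\,\dd S+\frac{\|K\|_\infty^2}{4\epsilon}\,\psi(0)^2\int_S v(s)^2\,\dd S,
\]
and choosing $\epsilon:=\|K\|_\infty^2\alpha/(4R)$ turns the $v$-contribution into exactly $\tfrac{R}{\alpha}\,\psi(0)^2\int_S v(s)^2\,\dd S$, which, together with the term $-\int_S\tfrac12 K(s)v(s)^2\psi(0)^2\,\dd S$ already present on the left-hand side of \eqref{E:lb1000par}, accounts for the whole $v$-part of the right-hand side. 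Bounding moreover $-\int_S\big(\alpha+\tfrac12 K(s)\big)w(s,0)^2\,\dd S\ge -A\int_S w(s,0)^2\,\dd S$ with $A:=\alpha+\tfrac12\|K\|_\infty+\epsilon$, it then remains to prove
\[
\int_\Sigma|\partial_t w|^2\,\dd S\,\dd t-A\int_S w(s,0)^2\,\dd S-\beta\int_S w(s,\delta)^2\,\dd S\ \ge\ -\frac{\alpha^2}{2}\,\|w\|^2_{L^2(\Sigma,\dd S\dd t)}.
\]

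For this core estimate I would fix a small number $\theta\in(0,1)$. On each fibre $\{s\}\times(0,\delta)$ the function $w(s,\cdot)$ is orthogonal to $\psi=\psi^{\beta}$ by \eqref{E:wperppsi}, so \eqref{eq-fpsi} gives $\int_0^\delta|\partial_t w|^2\,\dd t\ge\alpha\,w(s,0)^2+\beta\,w(s,\delta)^2$; applying this to the fraction $1-\theta$ of $\int_0^\delta|\partial_t w|^2\,\dd t$ yields
\[
\int_0^\delta|\partial_t w|^2\,\dd t-A\,w(s,0)^2-\beta\,w(s,\delta)^2\ \ge\ \theta\int_0^\delta|\partial_t w|^2\,\dd t-B\,w(s,0)^2-\theta\beta\,w(s,\delta)^2,
\]
with $B:=\theta\alpha+\tfrac12\|K\|_\infty+\epsilon$. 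Now I would bound both $w(s,0)^2$ and $w(s,\delta)^2$ using Lemma~\ref{lemsob} with a parameter $\ell\le\delta$, and take $\ell:=\theta/(B+\theta\beta)$, which annihilates the remaining $\int_0^\delta|\partial_t w|^2$ term and leaves the coefficient $-2(B+\theta\beta)^2/\theta$ in front of $\int_0^\delta w^2\,\dd t$. Since $\epsilon=\|K\|_\infty^2\alpha/(4R)$ and $\beta=\cO(1)$ by the preceding lemma, $B+\theta\beta=\big(\theta+\tfrac{\|K\|_\infty^2}{4R}\big)\alpha+\cO(1)$, so that coefficient equals $-\tfrac2\theta\big(\theta+\tfrac{\|K\|_\infty^2}{4R}\big)^2\alpha^2\,(1+o(1))$ as $\alpha\to+\infty$, and it is $\ge-\tfrac12\alpha^2$ for $\alpha$ large provided $\theta+\tfrac{\|K\|_\infty^2}{4R}<\tfrac12\sqrt\theta$. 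Taking $\theta=\tfrac1{16}$, this holds whenever $R>4\|K\|_\infty^2$; integrating over $s\in S$ then produces the displayed core estimate. Here $\ell\sim c/\alpha$ with $c<1$, hence $\ell<\delta$ for large $\alpha$ because $\delta\alpha\to+\infty$ by \eqref{eq-dada}, and $\psi(\delta)^2=\cO(\alpha e^{-2\delta\alpha})$ from \eqref{eq-fn1d} plays no role in this estimate.

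The main difficulty is to reach the \emph{sharp} constant $\tfrac12$. Applying Lemma~\ref{lemsob} directly to the whole term $-A\int_S w(s,0)^2\,\dd S$, without first absorbing its principal part $\alpha\,w(s,0)^2$ through the positivity \eqref{eq-fpsi}, would require $\ell$ of order at least $4/\alpha$ to control the coefficient of $\|w\|^2$ and at the same time $\ell$ of order at most $1/\alpha$ to keep the coefficient of $\int|\partial_t w|^2$ nonnegative, which is incompatible. It is the use of \eqref{eq-fpsi} to remove the bulk boundary contribution, leaving only the residual $\big(\theta+\tfrac{\|K\|_\infty^2}{4R}\big)\alpha\,w(s,0)^2$ — which can be made an arbitrarily small multiple of $\alpha$ by enlarging $R$ — that makes the argument close with the factor $\tfrac12$. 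Since $\theta$ is an absolute constant and $\epsilon/\alpha$ depends only on $R$ and $\|K\|_\infty$, the threshold $R$ obtained in this way depends only on $\|K\|_\infty$, as asserted.
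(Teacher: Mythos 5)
Your proposal is correct and follows essentially the same route as the paper's proof: Young's inequality on the cross term with the weight tuned so that the $v$-contribution is exactly $\tfrac{R}{\alpha}\psi(0)^2\int_S v^2\,\dd S$, absorption of the dominant boundary contributions through the positivity \eqref{eq-fpsi} granted by the orthogonality \eqref{E:wperppsi}, and then Lemma~\ref{lemsob} with $\ell\sim 1/\alpha$ chosen to cancel the residual kinetic coefficient, concluding with $R$ large depending only on $\|K\|_{\infty}$. The only deviations are cosmetic (your $\theta$ plays the role of the paper's $\eta$, and you fix the splitting parameter first and then $\ell$, whereas the paper fixes $\ell=\rho(\alpha+\beta)^{-1}$ and then solves for $\eta$), so no further comparison is needed.
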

 \begin{proof}
Denote by $J$ the term on the left-hand side of \eqref{E:lb1000par}. For any $\varepsilon>0$
we have
\[
\left|\int_{S}K(s)v(s)\psi(0)w(s,0) \dd S\right|
 \leq
\varepsilon \psi(0)^2 \int_{S}v(s)^2 \dd S+\dfrac{1}{4\varepsilon}\int_{S}K(s)^2 w(s,0)^2 \dd S,
\]
and there holds, for sufficiently small $\varepsilon$,
\begin{multline}
J \geq \int_{\Sigma}|\partial_{t}w(s,t)|^2 \dd S \dd t - \big(\alpha+\dfrac{B}{\varepsilon}\big)\int_{S} w(s,0)^2 \dd S
 \\
 -\psi(0)^2\int_{S}\Big(\dfrac{K(s)}{2}+\varepsilon\Big) v(s)^2 \dd S-\beta \int_{S}w(s,\delta)^2 \dd S,
  \label{eq-jj1}
\end{multline}
 with $B=\sup_{S}(K^2+|K|)$. Due to \eqref{E:wperppsi} and to the inequality \eqref{eq-fpsi} of Lemma \ref{lemn} we have
\[
\int_{0}^{\delta}\big|\partial_{t}w(s,t)\big|^2 \dd t - \alpha w(s,0)^2-\beta w(s,\delta)^2\geq 0, \quad s\in S.
\]
It follows that for any $\eta\in(0,1)$ we can estimate
\begin{multline*}
\int_{\Sigma}|\partial_{t}w(s,t)|^2 \dd S \dd t\\
 \geq \eta \int_{\Sigma}|\partial_{t}w(s,t)|^2 \dd S \dd t +(1-\eta) \alpha \int_{S}w(s,0)^2 \dd S +(1-\eta)\beta \int_{S}w(s,\delta)^2 \dd S,
\end{multline*}
and the substitution into \eqref{eq-jj1} gives
\begin{multline*}
J \geq \eta \int_{\Sigma}|\partial_{t}w(s,t)|^2 \dd S \dd t-\Big(\eta\alpha+\dfrac{B}{\varepsilon}\Big)\int_{S}w(s,0)^2 \dd S
-\eta\beta \int_{S} w(s,\delta)^2 \dd S\\
{}-\psi(0)^2\int_{S}\Big(\dfrac{K(s)}{2}+\varepsilon\Big) v(s)^2 \dd S.
\end{multline*}
Therefore, choosing $\varepsilon=R/\alpha$ with $R>0$ and then using Lemma \ref{lemsob} we obtain, for any $\ell \in (0,\delta)$,
\[
\begin{aligned}
J\geq&\, \eta \int_{\Sigma}|\partial_{t}w(s,t)|^2 \dd S \dd t-\Big(\eta+\dfrac{B}{R} \Big)\alpha \bigg(\ell \int_{\Sigma}|\partial_{t}w(s,t)|^2\dd S \dd t
+\dfrac{2}{\ell}\int_{\Sigma}|w(s,t)|^2\dd S \dd t\bigg)
\\
&-\eta \beta \left(\ell \int_{\Sigma}|\partial_{t}w(s,t)|^2\dd S \dd t +\dfrac{2}{\ell}\int_{\Sigma}w(s,t)^2\dd S \dd t\right)
-\psi(0)^2\int_{S}\Big(\dfrac{K(s)}{2}+\dfrac{R}{\alpha}\Big) v(s)^2 \dd S
\\
=&\,
\Big[\eta-\ell\alpha\Big(\eta+\dfrac{B}{R}\Big)-\ell\beta\eta\Big] \int_{\Sigma}|\partial_{t}w(s,t)|^2 \dd S \dd t-\dfrac{2}{\ell}\Big(\eta\alpha+\dfrac{B\alpha}{R}+\eta\beta\Big)\int_{\Sigma}w(s,t)^2 \dd S \dd t 
\\
&-\psi(0)^2\int_{S}\Big(\dfrac{K(s)}{2}+\dfrac{R}{\alpha}\Big) v(s)^2 \dd S.
\end{aligned} 
\]
Choose $\ell=\rho(\alpha+\beta)^{-1}$ with $\rho\in(0,1/2)$ and $R \geq B (1-\rho)^{-1}$, then
the choice
\[
\eta=\frac{\ell B \alpha}{R\big(1-\ell (\alpha+\beta)\big)}=\frac{\rho B\alpha}{(\alpha+\beta)R(1-\rho)}\in (0,1),
\]
implies
\[
\eta-\ell\alpha\Big(\eta+\dfrac{B}{R}\Big)-\ell\beta\eta=0,
\]
and
\begin{equation}
\label{E:lastlwJ}
J\geq -\left(\frac{2B\alpha(\alpha+\beta)}{R(1-\rho)}+\frac{2B\alpha(\alpha+\beta)}{\rho R} \right) \|w\|^2_{L^2(\Sigma,\dd S\dd t)}
-\psi(0)^2\int_{S}\Big(\dfrac{K(s)}{2}+\dfrac{R}{\alpha}\Big) v(s)^2 \dd S.
\end{equation}
As $R$ can be taken arbirary large, we may choose it in order to have
\[
R > 4B\left( \frac{1}{1-\rho}+\frac{1}{\rho}\right),
\]
then there exists $\alpha_{0}>0$ such that for $\alpha\ge \alpha_0$ we have
\[
\left(\frac{2B\alpha(\alpha+\beta)}{R(1-\nu)}+\frac{2B\alpha(\alpha+\beta)}{R\nu} \right)<\frac{\alpha^2}{2},
\]
which gives the result.
 \end{proof}

Substituting the result of Lemma~\ref{lem65} into \eqref{E:DevLBII} 
and using the equality
\[
\|u\|^2_{L^2(\Sigma,\dd S \dd t)}=\|v\|^2_{L^2(S,\dd S)}+\|w\|^2_{L^2(\Sigma,\dd S \dd t)}
\]
we deduce
 \begin{multline}
 \label{E:DevLBI2}
 q_{\alpha}^{-}(u,u) -E^{\beta}\|u\|^2_{L^2(\Sigma,\dd S \dd t)}
 \geq 
 (1-c_{0}\delta)\int_{S}g^{\rho\mu}\partial_{\rho} v(s)\partial_{\mu} v(s) \dd S
 \\
 -\psi(0)^2\int_S \Big(\dfrac{K(s)}{2}+\dfrac{R}{\alpha}\Big) v(s)^2 \dd S-
\Big(E^{\beta}+\dfrac{\alpha^2}{2}\Big)\|w\|_{L^2(\Sigma,\dd S \dd t)}-c_{1}\|u\|^2_{L^2(\Sigma,\dd S \dd t)}.
 \end{multline}
 We choose
$\delta= \alpha^{-\kappa}$, $\kappa\in [2/3,1)$,
then Lemma \ref{lemn} provides
\[
E^{\beta}=-\alpha^2+o(1), \quad \psi(0)^2=2\alpha+o(1).
\]
Using rough estimates, we deduce, as $\alpha\to+\infty$:
\begin{multline*}
q_{\alpha}^{-}(u,u) -E^{\beta}\|u\|^2_{L^2(\Sigma,\dd S \dd t)}
 \geq 
 (1-c_{0}\delta)\int_{\Sigma}g^{\rho\mu}\partial_{\rho} v\partial_{\mu} v\, \dd S
 \\
 -\int_{\Sigma}(\alpha K+2R+1)v(s)^2 \dd S+\frac{\alpha^2}{4}\|w\|^2_{L^2(\Sigma,\dd S \dd t)}-c_{1}\|u\|^2_{L^2(\Sigma,\dd S \dd t)}.
\end{multline*}
Following the arguments of Section \ref{SS:lb2}, we get 
\[
E_{j}(Q_{\alpha}^{-})-E^{\beta} \geq E_{j}\big(-(1-c_{0}\delta)\Delta_{S}-\alpha K\big)+\cO(1).
\]
Finally, we get the desired lower bound of Theorem \ref{thm2} by using \eqref{E:lbvpopC3}. 

\section{Proof of Corollaries \ref{cor1} and \ref{cor1a}}\label{secor}

In this section, we assume that $\Omega$ is $C^2$-admissible. 
Then the function $K$ is bounded, and an easy adaptation of~\cite[Proposition 1]{FK}
to the non-euclidean setting gives the following:
\begin{lemma}\label{lemfk}
For any fixed $j\in\NN$ there holds
\begin{equation}
\label{E:asymptgross}
E_{j}(-\Delta_{S}- \alpha K)=-K_{\max}\alpha+o(\alpha), \quad \alpha\to+\infty.
\end{equation}
\end{lemma}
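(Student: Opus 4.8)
The plan is to prove \eqref{E:asymptgross} by pairing a cheap operator lower bound with a trial–function upper bound, essentially reproducing the construction in the proof of Lemma~\ref{lem23} but dropping the hypothesis that $K$ attains its maximum. For the lower bound I would simply observe that $-\Delta_S\ge0$ as a quadratic form while $K\le K_{\max}$ pointwise, so that $-\Delta_S-\alpha K\ge -\alpha K_{\max}$ as an operator inequality for every $\alpha>0$; the min-max formula \eqref{E:QR} then gives $E_j(-\Delta_S-\alpha K)\ge -\alpha K_{\max}$ for all $j$, which is already sharper than the claimed asymptotics.

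For the upper bound, fix $\varepsilon>0$ and choose $s_0\in S$ with $K(s_0)>K_{\max}-\varepsilon$ — here only the supremum, not a genuine maximum, is used, which is the one place where the present statement differs from Lemma~\ref{lem23}. Since the boundary is $C^2$, the function $K$ is continuous, hence there is $\rho_0>0$ with $K>K_{\max}-2\varepsilon$ on the geodesic ball $\{d(\cdot,s_0)<\rho_0\}$, $d$ denoting the geodesic distance. Exactly as in the proof of Lemma~\ref{lem23}, I would pick profiles $f_1,\dots,f_j\in C^\infty_c\big((0,1)\big)$ with pairwise disjoint supports, none identically zero, set $v_i(s):=f_i\big(r^{-1}d(s,s_0)\big)$ for $r\in(0,\rho_0)$, and read off in geodesic normal coordinates at $s_0$ that, as $r\to0$,
\[
\|v_i\|^2_{L^2(S)}=\theta_i\,r^{\nu-1}\big(1+o(1)\big),\qquad
\int_S g^{\rho\mu}\partial_\rho v_i\,\partial_\mu v_i\,\dd S\le C_i\,r^{\nu-3},
\]
with $\theta_i,C_i>0$ independent of $r$ and $\alpha$. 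Because $v_i$ is supported where $K>K_{\max}-2\varepsilon$, its Rayleigh quotient for $-\Delta_S-\alpha K$ is at most $C_i\theta_i^{-1}r^{-2}\big(1+o(1)\big)-\alpha(K_{\max}-2\varepsilon)$, and the disjointness of supports makes the $v_i$ orthogonal both in $L^2(S)$ and in the quadratic form, so testing \eqref{E:QR} on their span gives $E_j(-\Delta_S-\alpha K)\le C r^{-2}-\alpha(K_{\max}-2\varepsilon)$ for $r$ small, with $C$ depending only on $j$, the $f_i$ and the local geometry at $s_0$. Choosing $r=\alpha^{-1/3}$, which lies in $(0,\rho_0)$ for $\alpha$ large, yields $E_j(-\Delta_S-\alpha K)\le -\alpha K_{\max}+2\varepsilon\alpha+C\alpha^{2/3}$ for all large $\alpha$, hence $\limsup_{\alpha\to+\infty}\alpha^{-1}\big(E_j(-\Delta_S-\alpha K)+\alpha K_{\max}\big)\le 2\varepsilon$; letting $\varepsilon\to0$ and combining with the lower bound gives \eqref{E:asymptgross}.

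I do not expect a genuine obstacle here: this is exactly the "easy adaptation of \cite[Proposition~1]{FK} to the non-euclidean setting" referred to in the text. The two points that deserve a little care are, first, that on a non-compact boundary $K$ may fail to attain $K_{\max}$, which is why one localizes at an $\varepsilon$-near-maximizer and passes to the limit $\varepsilon\to0$ only at the very end rather than sitting at an exact maximum point; and second, that the $r$-asymptotics of $\|v_i\|^2$ and of the Dirichlet energy on $S$ must be computed in geodesic normal coordinates, where the Riemannian volume element and the metric coefficients $g^{\rho\mu}$ differ from their Euclidean counterparts by $1+O(r)$ — but this perturbation affects only subleading terms and does not change the leading $r^{\nu-1}$ and $r^{\nu-3}$ scalings used above.
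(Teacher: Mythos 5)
Your proof is correct, and it is essentially the argument the paper has in mind: the text does not write out a proof of Lemma~\ref{lemfk} but delegates it to an adaptation of \cite[Proposition~1]{FK}, and your construction is exactly that adaptation, re-running the trial-function scheme of Lemma~\ref{lem23} (disjointly supported radial bumps $v_i=f_i(r^{-1}d(\cdot,s_0))$ tested in \eqref{E:QR} with $r=\alpha^{-1/3}$, plus the trivial operator bound $-\Delta_S-\alpha K\ge-\alpha K_{\max}$). The one genuine difference from Lemma~\ref{lem23} — that $K_{\max}$ need not be attained — is handled correctly by localizing at an $\varepsilon$-near-maximizer and sending $\varepsilon\to0$ only after the limit $\alpha\to+\infty$, at the acceptable price of losing the explicit $\cO(\alpha^{2/3})$ remainder and retaining only $o(\alpha)$, which is all the lemma claims.
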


\begin{proof}[\bf Proof of Corollary~\ref{cor1}]
It is sufficient to substitute the estimate \eqref{E:asymptgross} into the asymptotics \eqref{eq-th1}
of Theorem~\ref{thm1}.
\end{proof}

To prove Corollary \ref{cor1a} we need a rough estimate for the essential spectrum of $Q^\Omega_\alpha$. Recall that for a self-adjoint operator $Q$, we have denoted by $E(Q)$ the infimum of its essential spectrum. Then there holds:

\begin{lemma}\label{lemess}
Assume that $\partial\Omega$ is non-compact and denote
$K_\infty:=\limsup_{s\to\infty}K(s)$, then
$E(Q^\Omega_\alpha)\ge -\alpha^2- K_\infty \alpha+o(\alpha)$
 for large $\alpha$.
\end{lemma}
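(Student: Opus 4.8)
The plan is to estimate the essential spectrum from below by combining a Dirichlet--Neumann bracketing at infinity with the one-dimensional analysis already developed in Lemmas~\ref{lemn} and~\ref{lemsob}. First I would fix $\varepsilon>0$ and use the definition of $K_\infty=\limsup_{s\to\infty}K(s)$ to find a compact set $M\subset S$ such that $K(s)\le K_\infty+\varepsilon$ for all $s\in S\setminus M$. The essential spectrum of $Q^\Omega_\alpha$ is not affected by the behaviour of the operator on a bounded region, so by a standard decomposition argument (imposing an extra Neumann condition on a compact hypersurface separating the part of $\Omega$ near $M$ from the rest, which can only decrease the eigenvalues and hence $E(Q^\Omega_\alpha)$) it suffices to bound from below the operator $Q^\Omega_\alpha$ restricted to the region of $\Omega$ whose boundary lies in $S\setminus M$.

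Next I would repeat verbatim the reduction of Section~\ref{sec1}: introduce the tubular neighbourhood $\Omega_\delta$ of $S\setminus M$ with $\delta$ small (and $\delta\alpha$ large), use the Dirichlet--Neumann bracketing \eqref{eq-est1}, apply the change of variables of Section~\ref{S:CV} to pass to $\Sigma=(S\setminus M)\times(0,\delta)$ with the metric $G$, and use the form inequality \eqref{eq-cc} to bound $Q^\Omega_\alpha$ from below by $H^-_\alpha$ on this piece. On $\Theta_\delta$ the Neumann Laplacian is nonnegative, so it contributes nothing below zero. Then, following the lower-bound computation of Section~\ref{SS:lb1} with $K$ replaced by its bound $K_\infty+\varepsilon$ and working with the fibre operator $T^\beta$ of Lemma~\ref{lemn}, one obtains that modulo the operator $\tfrac{\alpha^2}{2}(1-P)$ acting on the orthogonal complement of the product states, $H^-_\alpha$ is bounded below by $-\alpha^2+\big(-(1-c_0^-\delta)\Delta_{S\setminus M}-(K_\infty+\varepsilon)\alpha\big)+\cO(1+\alpha e^{-\delta\alpha})$ on $L^2(S\setminus M)$. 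Since $-\Delta_{S\setminus M}\ge 0$, the bottom of this operator — and hence $E(Q^\Omega_\alpha)$ — is at least $-\alpha^2-(K_\infty+\varepsilon)\alpha+\cO(1+\alpha e^{-\delta\alpha})$. Choosing $\delta=\tfrac{b\log\alpha}{\alpha}$ as in \eqref{eq-dba} kills the exponential term, and letting $\varepsilon\to0$ (with $M$ depending on $\varepsilon$) yields $E(Q^\Omega_\alpha)\ge-\alpha^2-K_\infty\alpha+o(\alpha)$.

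The main obstacle is the bookkeeping around the compact set $M$: one must argue that cutting $\Omega$ along a hypersurface near $M$ and imposing Neumann conditions there genuinely lowers $E(Q^\Omega_\alpha)$ (this is the usual Neumann bracketing, but on a noncompact domain one should check that the decoupled piece living over a neighbourhood of $M$ has discrete spectrum below any fixed level, so it does not pollute the essential spectrum), and that the remaining piece over $S\setminus M$ can indeed be handled by the estimates of Section~\ref{SS:lb1}, which were written for the full boundary but use only local geometric bounds (the constants there depend only on $\|L_s\|_\infty$, $\|\partial_t\varphi\|_\infty$, $\|\partial_t^2\varphi\|_\infty$, all controlled by (H2)). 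Once this localization is set up cleanly, the rest is a direct transcription of the lower-bound argument already in hand.
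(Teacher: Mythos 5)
Your plan is correct and is essentially the paper's own argument: the paper likewise fixes $K_0>K_\infty$, cuts $\Omega$ by Neumann bracketing into a compact tubular piece over $S_0$ (discarded because it has empty essential spectrum), the tubular piece over $S_1=S\setminus\overline{S_0}$, and the remainder $\Theta_\delta$ where the Neumann Laplacian is nonnegative, then applies the lower-bound machinery of Section~\ref{SS:lb1} to the non-compact piece with $K\le K_0$ and $\delta=b\log\alpha/\alpha$, and finally lets $K_0\to K_\infty$. The localization points you flag (monotonicity of $E(\cdot)$ under the form inequality and discreteness of the spectrum of the compact piece) are exactly how the paper handles them.
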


\begin{proof}
Lets us modify a bit the construction of Subsection \ref{dtnbra}.
By assumption, for any $K_0> K_\infty$
there exists a compact domain
$S_0\subset S$ such that $K(s)\le K_0$ for
$s\in S\setminus S_0$. Set $S_1:=S\setminus \overline{S_0}$.
 Now let $q'_\alpha$ denote
the quadratic form given by the same expression as $q^\Omega_\alpha$
but acting on the domain
$\cD(q'_\alpha):=H^1(\Omega^0_\delta)\oplus H^{1}( \Omega^1_\delta )\oplus H^{1}(\Theta_\delta)$
with
\[
\Omega^0_\delta:=\Phi\big(S_0,(0,\delta)\big),
\quad
\Omega^1_\delta:=\Phi\big(S_1,(0,\delta)\big),
\quad
\Theta_\delta:=\Omega\setminus \overline{\Omega^0_\delta\cup \Omega^1_\delta}
\]
and $\delta$ is sufficiently small. Let $Q'_\alpha$ be the self-adjoint operator associated with $q'_\alpha$
and acting in $L^2(\Omega)$. Due to the form inequality $Q_\alpha^\Omega\ge Q'_\alpha$
we have $E(Q^\Omega_\alpha)\ge E(Q'_\alpha)$.
On the other hand, one represents $Q'_\alpha=Q^0_\alpha \oplus Q^1_\alpha \oplus (-\Delta)_{\Theta_\delta}^N$,
where $Q^j_\alpha$, $j\in\{0,1\}$, is the self-adjoint operator in $L^2(\Omega^j_\delta)$
generated by the quadratic form
\[
q^j_\alpha(u,u)=\int_{\Omega^j_\delta}|\nabla u|^2\dd x-\alpha \int_{\partial \Omega^j_\delta\cap S} u^2\dd S,
\quad
\cD(q^j_\alpha)=H^1(\Omega^j_\delta),
\]
and $(-\Delta)_{\Theta_\delta}^N$ is the Neumann Laplacian in $\Theta_\delta$.
Note that the domain $\Omega^0_\delta$ is bounded, hence,
the operator $Q^0_\alpha$ has an empty essential spectrum.
It follows that
\begin{equation*}
E(Q^\Omega_\alpha) \ge 
E(Q'_\alpha)
=\min\big(
E(Q^1_\alpha),
E((-\Delta)_{\Theta_\delta}^N)\big)
\ge\min\big(E(Q^1_\alpha),0\big).
\end{equation*}
On the other hand, the analysis of Section~\ref{SS:lb1} can be applied
to the operator $Q^1_\alpha$. In particular, the choice \eqref{eq-dba} for $\delta$ gives
\[
E_1(Q^1_\alpha)\ge -\alpha^2+E_1(-\Delta_{S_1}^N-\alpha K)+\cO(\log\alpha),
\]
where $-\Delta_{S^1}^N$ is the Neumann realization of the positive Laplace-Beltrami
operator on $S_1$. As $K\le K_0$ in $S_1$, we have
\[
E(Q^1_\alpha)\ge
E_1(Q^1_\alpha)\ge -\alpha^2-K_0 \alpha+\cO(\log\alpha)
\]
and, subsequently, $E(Q^\Omega_\alpha)\ge -\alpha^2-K_0 \alpha+\cO(\log\alpha)$.
As $K_0>K_\infty$ is arbitrary, the result follows.
\end{proof}

\begin{proof}[\bf Proof of Corollary~\ref{cor1a}]
Let $N\in\NN$ be fixed. Due to Corollary \ref{cor1} and Lemma  \ref{lemess}
for large $\alpha$ we have
$E_N(Q^\Omega_\alpha)< E(Q^\Omega_\alpha)$, and
$E_N(Q^\Omega_\alpha)$ is the $N^{\mbox{th}}$ eigenvalue of $Q^\Omega_\alpha$
due to the min-max principle.
\end{proof}

\section{Analysis of the reduced operator on the boundary}
\label{S:semiclassical}
In this section we gather various standard estimates on the low-lying eigenvalues of 
$-\Delta_{S}- \alpha K$, depending on the hypotheses on the dimension $\nu$ and on $K$. In this section,
for the cas of an unbounded $\partial\Omega$ we assume that the assumption~\eqref{E:H3} holds.

Now note that by setting $h=\alpha^{-1/2}$ and $V=-K$, the operator $-\Delta_{S}- \alpha K$ writes as 
\[
-h^{-2}\left(-h^2\Delta_{S}+V \right)
\]
and enters naturally the framework of Schr\"odinger operators in the semi-classical limit $h\to0$.
The assumption \eqref{E:H3} writes now
$$\liminf_{s\to\infty}V(s) > \inf_{s\in S} V(s),$$
and ensures that the asymptotics of the low-lying eigenvalues of the reduced operator can be determined by the behavior of $V$ near its minima (that are the maxima of $K$), under suitable hypotheses.

\begin{rem}
\label{R:measuremax}
 Assume that the measure of the set $K^{-1}(\{K_{\max}\})$ is 0. Then the word-by-word
 adaptation of~\cite[Lemma 3.2]{BHV}
to the non-euclidean setting gives, for any fixed $j\in\NN$,
\begin{equation}
  \label{eq-ejej}
E_{j}\big(-\Delta_{S}+ \alpha (K_{\max}-K)\big)\to+\infty \text{ as }\alpha\to+\infty
\end{equation}
If, in addition, $\Omega$ is $C^3$-admissible, then \eqref{eq-th1a} can be decomposed as
\[
E_j(Q^\Omega_\alpha)=-\alpha^2 - K_\text{max} \alpha
+ 
E_{j}\big(-\Delta_{S}+ \alpha (K_{\max}-K)\big)
+\cO(1),
\]
and the term $E_{j}\big(-\Delta_{S}+ \alpha (K_{\max}-K)\big)$
has a lower order with respect to $\alpha$, see Lemma~\ref{lemfk}, but
is large with respect to the remainder $\cO(1)$, see \eqref{eq-ejej},
and hence provides a refinement
with respect to the first order asymptotics~\eqref{eq-hhh}.
\end{rem}
The aim is now to describe more precise asymptotics on $-\Delta_{S}- \alpha K$, in order to see the possible gap between eigenvalues, in particular we want to compare $E_{j}(-\Delta_{S}- \alpha K)+K_{\max}\alpha$ to the remainders in Theorems \ref{thm1}
and \ref{thm2}. The most commonly studied
case is when the maxima of $K$ 	are non-degenerate, see \cite[Theorem 5.1]{Si83} or \cite{HS84}:
\begin{prop}
\label{P:nondegenerate}
Assume that the boundary of $\Omega$ is $C^{5}$ and, if non-compact,
satisfies \eqref{E:H3}. Furthemore, assume that the function $K$ admits a unique maximum at $s_{0}\in S$
and that the Hessian of $(-K)$ at $s_{0}$ is positive-definite. Denote by $\mu_{k}$ the eigenvalues
of the Hessian and 
\begin{equation}
   \label{eq-eee}
\cE:=\Big\{ \sum_{k=1}^{\nu-1}\sqrt{\frac{\mu_{k}}{2}}(2n_{k}-1),\quad n_{k}\in \NN\Big\},
\end{equation}
then for each fixed $j\in\NN$ there holds:
\[
E_{j}(-\Delta_{S}- \alpha K)=-K_{\max}\alpha+e_{j}\alpha^{1/2}+\cO(\alpha^{1/4}) \text{ as } \alpha\to+\infty,
\]
where $e_{j}$ is the $j^{\mbox{th}}$ element of $\cE$, counted with multiplicity. Moreover, if $\Omega$ is $C^{6}$, and if 
$e_{j}$ is of multiplicity one, the remainder can be replaced by $O(1)$.
\end{prop}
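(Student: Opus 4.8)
The plan is to recast the statement as a semi-classical problem and then to invoke the standard harmonic approximation near a non-degenerate potential well. Setting $h:=\alpha^{-1/2}$ and $V:=-K$, and using that dilating a self-adjoint operator by $h^{-2}$ dilates all of its min-max values by the same factor, one has
\[
E_j(-\Delta_S-\alpha K)=h^{-2}\,E_j(P_h),\qquad P_h:=-h^2\Delta_S+V,
\]
so it is enough to prove $E_j(P_h)=V(s_0)+h\,e_j+\cO(h^{3/2})$ and, in the last assertion, $E_j(P_h)=V(s_0)+h\,e_j+\cO(h^{2})$. Here $e_j$ is the $j^{\mathrm{th}}$ element of $\cE$, and $\cE$ is exactly the spectrum of the model operator $H_0:=-\Delta_y+\tfrac12\langle(\mathrm{Hess}\,V)(s_0)y,y\rangle$ on $L^2(\RR^{\nu-1})$: diagonalizing $\mathrm{Hess}\,V(s_0)$ and separating variables turns $H_0$ into a sum of one-dimensional oscillators $-\partial_{y_k}^2+\tfrac12\mu_k y_k^2$, whose eigenvalues are $\sqrt{\mu_k/2}\,(2n_k-1)$, $n_k\in\NN$. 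Note that a $C^5$ (resp.\ $C^6$) boundary makes the principal curvatures, hence $V$, of class $C^3$ (resp.\ $C^4$), which is the regularity that will be used.

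First I would localize the analysis near $s_0$. Fix a small geodesic ball $B=B(s_0,r)$ and an IMS partition of unity $\chi_1^2+\chi_2^2=1$ with $\supp\chi_1\subset B$; the localization formula gives $P_h=\chi_1P_h\chi_1+\chi_2P_h\chi_2+\cO(h^2)$. On $\supp\chi_2$ one has $V\ge V(s_0)+c_0$ for some $c_0>0$ — here the uniqueness of the global minimum is used away from infinity, and \eqref{E:H3} is used near infinity when $\partial\Omega$ is non-compact — so $\chi_2P_h\chi_2\ge(V(s_0)+c_0)\chi_2^2$. Since the near-$s_0$ contributions computed below lie well below $V(s_0)+c_0$, both the $E_j(P_h)$ and, via Agmon-type decay of the corresponding quasimodes, the whole spectral problem at energies $V(s_0)+\cO(h)$ reduce to the operator $\chi_1P_h\chi_1$, up to errors $\cO(e^{-c/\sqrt h})$. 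When $\partial\Omega$ is compact this step is immediate.

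Next comes the harmonic approximation itself, carried out in geodesic normal coordinates $x$ at $s_0$, in which $g_{ij}(x)=\delta_{ij}+\cO(|x|^2)$ and $V(x)=V(s_0)+\tfrac12\langle(\mathrm{Hess}\,V)(s_0)x,x\rangle+W_3(x)+\cO(|x|^4)$ with $W_3$ the odd cubic Taylor polynomial. The parabolic rescaling $x=\sqrt h\,y$ turns $h^{-1}\big(P_h-V(s_0)\big)$ into $H_0+\sqrt h\,W_3(y)+\cO(h)$ on the relevant length scale. The upper bound $E_j(P_h)\le V(s_0)+h\,e_j+\cO(h^{3/2})$ then follows from the min-max by testing on the span of the rescaled and cut-off Hermite eigenfunctions of $H_0$ attached to $e_1,\dots,e_j$; the matching lower bound follows by comparing, on $\supp\chi_1$, $P_h$ with $V(s_0)+h(1-\varepsilon)H_0-Ch^{3/2}$ and letting $r\to0$ with $h$ appropriately. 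This is precisely the classical argument of \cite[Theorem~5.1]{Si83} and \cite{HS84}, the only addition being the manifold localization of the previous paragraph. Multiplying by $h^{-2}$ gives $E_j(-\Delta_S-\alpha K)=-K_{\max}\alpha+e_j\alpha^{1/2}+\cO(\alpha^{1/4})$.

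Finally, for the improvement assume $\partial\Omega\in C^6$ (so $V\in C^4$) and that $e_j$ is \emph{simple}. Then $e_j$ is an isolated simple eigenvalue of $H_0$, and the standard perturbative construction for a simple eigenvalue at a non-degenerate minimum (see \cite{Si83,HS84,DiSj99}) produces an asymptotic expansion $E_j(P_h)=V(s_0)+h\,e_j+h^{3/2}c_1+\cO(h^2)$ in which, by the normal-coordinate computation above, $c_1=\langle\phi_j,W_3\phi_j\rangle$, where $\phi_j$ is the normalized $H_0$-eigenfunction for $e_j$. Because $H_0$ commutes with $y\mapsto-y$, the simple eigenfunction $\phi_j$ has a definite parity, so $|\phi_j|^2$ is even while $W_3$ is odd; hence $c_1=0$, $E_j(P_h)=V(s_0)+h\,e_j+\cO(h^2)$, and $E_j(-\Delta_S-\alpha K)=-K_{\max}\alpha+e_j\alpha^{1/2}+\cO(1)$. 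The main obstacle I anticipate is the non-compact case: turning the control of the spectrum near infinity and the exponential localization into rigorous estimates under mere $C^k$-admissibility, and keeping the finite-regularity budget honest — $C^3$ of $V$ being exactly what the $\cO(h^{3/2})$ remainder requires, $C^4$ the $\cO(h^2)$ one. Once everything is localized near $s_0$, the rest is the by-now-classical harmonic approximation, for which \cite{Si83,HS84} can be quoted.
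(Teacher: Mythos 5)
Your proposal is correct and follows essentially the same route as the paper: the paper does not write out a proof but simply invokes the classical semiclassical harmonic approximation at a non-degenerate minimum (\cite[Theorem 5.1]{Si83}, \cite{HS84}) after the rescaling $h=\alpha^{-1/2}$, $V=-K$, which is exactly the argument (localization via IMS and \eqref{E:H3}, normal-coordinate rescaling, quasimodes for the upper bound, harmonic comparison for the lower bound, and parity of the cubic term for the improved $\cO(1)$ remainder when $e_j$ is simple) that you spell out. The only nitpick is that in the $C^5$ case $V$ is merely $C^3$, so the expansion should stop at an $\cO(|x|^3)$ remainder rather than $W_3+\cO(|x|^4)$ — which, as you yourself note at the end, is all the $\cO(h^{3/2})$ bound needs.
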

By combining Proposition~\ref{P:nondegenerate} with Theorem~\ref{thm1} we obtain Corollary~\ref{C:nondegenerate}. Remark
that for $\nu=2$ one is reduced to
\[
\cE=\Big\{ \sqrt{\frac{-K''(s_{0})}{2}}(2n-1),\quad n\geq1\Big\}
\]
and all the elements are of multiplicity one. Therefore, by
combining Theorem \ref{thm1} and Proposition \ref{P:nondegenerate}, we recover the first terms
of the asymptotic expansion \eqref{eq-HK}, see \cite[Theorem 1.1]{HK}.

Other cases of extrema are harder to handle, due to the different notions of degeneracy for the maxima of $K$, and to the possible interactions with the metric near the maxima. However, in the case $\nu=2$, we have the following: 
\begin{prop} 
\label{P:degenerate}
Let $\nu=2$ and $\Omega$ be $C^{2p+3}$-admissible with $p\geq2$ and,
if $\partial\Omega$ is non-compact, such that
the assumption \eqref{E:H3} is satisfied. Furthermore, assume that
the curvature $K$ admits a unique global maximum at $s_{0}$ with
\[
K(s)=K(s_{0})-C_{p}(s-s_{0})^{2p}+\cO\big((s-s_{0})^{2p+1}\big), \quad s\to s_0,
\]
where $C_p>0$ and  $s$ is an arc-length of the connected component $\Gamma$ of the boundary at which $K$ takes the maximal value.
Then we have the following expansion
\[
E_{j}(-\Delta_{S}- \alpha K)=-K_{\max}\alpha+e_{j}\alpha^{\frac{1}{p+1}}+\cO(\alpha^{\frac{1}{2(p+1)}}),
\]
 where $e_{j}$ is the $j^{\mbox{th}}$ eigenvalue of the operator $-\partial_{s}^2+C_{p}s^{2p}$ in $L^2(\RR)$.
 Moreover, if $\partial \Omega$ is $C^{2p+4}$, then the remainder can be replaced by $\cO(1)$.
 \end{prop}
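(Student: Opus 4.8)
The plan is to reduce the analysis of $-\Delta_S-\alpha K$, restricted to a neighbourhood of the maximum point $s_0$, to a one-dimensional semiclassical Schr\"odinger operator on $\RR$ with a degenerate potential well of order $2p$, and then to invoke the known harmonic-type approximation for such operators. Writing $h=\alpha^{-1/2}$ and $V=-K$, the operator becomes $h^{-2}(-h^2\Delta_S+V)$, and since $\nu=2$ the boundary is a disjoint union of curves; by \eqref{E:H3} (if $\partial\Omega$ is unbounded) and Remark~\ref{R:measuremax}, the low-lying eigenvalues are governed by the behaviour of $V$ near $s_0$ on the component $\Gamma$. First I would parametrise $\Gamma$ by arc length $s$, so that the induced metric is trivial and $-\Delta_S$ acts as $-\partial_s^2$ near $s_0$; the curvature expansion $K(s)=K_{\max}-C_p(s-s_0)^{2p}+\cO((s-s_0)^{2p+1})$ then gives $V(s)-V(s_0)=C_p(s-s_0)^{2p}+\cO((s-s_0)^{2p+1})$.

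Next, rescale: set $s-s_0=\alpha^{-1/(2(p+1))}\sigma$. Under this dilation, $-\partial_s^2$ becomes $\alpha^{1/(p+1)}(-\partial_\sigma^2)$, and $\alpha(K_{\max}-K)$ becomes $\alpha\cdot C_p\alpha^{-p/(p+1)}\sigma^{2p}+\cO(\alpha\cdot\alpha^{-(2p+1)/(2(p+1))})=\alpha^{1/(p+1)}\big(C_p\sigma^{2p}+\cO(\alpha^{-1/(2(p+1))})\big)$. Thus after dividing by $\alpha^{1/(p+1)}$ the operator $-\Delta_S+\alpha(K_{\max}-K)$ looks, to leading order, like $-\partial_\sigma^2+C_p\sigma^{2p}$, whose $j$th eigenvalue is $e_j$; the error terms, the curvature remainder and the localisation/cut-off errors contribute at the claimed order $\cO(\alpha^{1/(2(p+1))})$. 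The rigorous statement is obtained by the standard two-sided bracketing: for the upper bound, use genuine eigenfunctions of $-\partial_\sigma^2+C_p\sigma^{2p}$, cut off at scale $\alpha^{-\theta}$ with $0<\theta<1/(2(p+1))$ and rescaled back, as test functions in the min-max \eqref{E:QR}; for the lower bound, use an IMS-type partition of unity separating a neighbourhood of $s_0$ (where the rescaled analysis applies and the quadratic error of the Taylor expansion of $V$ is controlled) from the region where $V-V(s_0)$ is bounded below by a positive constant and hence contributes $\gtrsim\alpha$ to the Rayleigh quotient after multiplication by $\alpha$, so those configurations cannot compete for the $j$th eigenvalue. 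This is precisely the Dirichlet--Neumann bracketing and harmonic approximation scheme of \cite{Si83,HS84,HeMo01,DiSj99}, adapted to a polynomial well; one checks that $C^{2p+3}$ regularity of $\partial\Omega$ is exactly what is needed to have $K\in C^{2p+1}$ near $s_0$ and thus to control the Taylor remainder at order $2p+1$. The improvement to $\cO(1)$ under $C^{2p+4}$ follows because one then has one more order in the Taylor expansion of $K$, allowing a matching expansion of the eigenfunctions (a formal WKB/Born--Oppenheimer construction) that cancels the $\cO(\alpha^{1/(2(p+1))})$ term, just as in the non-degenerate case of Proposition~\ref{P:nondegenerate}.

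The main obstacle I expect is the lower bound near $s_0$: one must show that the subleading terms in the Taylor expansion of $K$ — the $\cO((s-s_0)^{2p+1})$ correction and, more subtly, the fact that arc length is only an approximate normal coordinate once one moves off $\Gamma$ to neighbouring components — do not spoil the $\alpha^{1/(2(p+1))}$ remainder. Concretely, after the rescaling $s-s_0=\alpha^{-1/(2(p+1))}\sigma$ the correction term scales as $\alpha^{1/(p+1)}\cdot\cO(\alpha^{-1/(2(p+1))})|\sigma|^{2p+1}$, which is fine on the relevant range $|\sigma|\lesssim\alpha^{\epsilon}$ but must be absorbed using the confining term $C_p\sigma^{2p}$ via a Young-type inequality; ensuring the constants work out uniformly, and that the partition-of-function localisation error (of size $\alpha^{1/(p+1)}\cdot(\text{cut-off gradient})^2$) is also $\cO(\alpha^{1/(2(p+1))})$, is the delicate bookkeeping. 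Everything else — the reduction via Theorem~\ref{thm1} or Theorem~\ref{thm2}, the exclusion of the far region, the $C^{2p+4}$ improvement — is routine once this localised estimate is in place.
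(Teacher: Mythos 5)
Your overall strategy is the same as the paper's: discard the components and the region where $K\le K_{\max}-\varepsilon$ (using \eqref{E:H3} in the non-compact case), cut to a fixed neighbourhood of $s_{0}$ with Dirichlet ends as in \cite{HS84}, pass to arc length so that $-\Delta_S$ is exactly $-\partial_s^2$ on $\Gamma$ (note that in dimension $\nu-1=1$ the normalized parametrization makes the metric trivial, so your worry about arc length being only an ``approximate normal coordinate'' is vacuous), and view $-\partial_s^2+\alpha(K_{\max}-K)$ as a semiclassical operator with $h=\alpha^{-1/2}$ and a degenerate well of order $2p$. The difference is that after this reduction the paper simply invokes Martinez--Rouleux \cite[Theorem 2.1]{MarRou88}, which yields at once a full asymptotic expansion of $E_j$ in powers of $\alpha^{-1/(2(p+1))}$ relative to the leading $\alpha^{1/(p+1)}$, whereas you redo the local analysis by hand (rescaling $s-s_{0}=\alpha^{-1/(2(p+1))}\sigma$, cut-off model eigenfunctions for the upper bound, IMS localization and absorption of the Taylor remainder by the confining term for the lower bound). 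That is a legitimate, more self-contained route, and your bookkeeping for the $\cO(\alpha^{1/(2(p+1))})$ remainder is correct; what the paper's route buys is that the $\cO(1)$ statement reduces to showing that one explicit coefficient in the expansion vanishes.

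The genuine gap is precisely in your treatment of the $\cO(1)$ improvement. Extra regularity gives $K(s)=K_{\max}-C_{p}(s-s_{0})^{2p}+C_{p}'(s-s_{0})^{2p+1}+\cO((s-s_{0})^{2p+2})$, but this by itself does not ``cancel'' anything: after rescaling, the odd correction produces a contribution of size exactly $\alpha^{1/(2(p+1))}$ with a computable coefficient, and a matching quasi-mode construction only identifies that coefficient --- you must prove it is zero. The paper's mechanism is a parity argument: the eigenvalues $e_{j}$ of $-\partial_{\sigma}^2+C_{p}\sigma^{2p}$ are simple (automatic here, since the model operator is one-dimensional, unlike in Proposition~\ref{P:nondegenerate} where simplicity is an assumption), its eigenfunctions have definite parity, and the correction $C_{p}'\sigma^{2p+1}$ is odd, so the first-order perturbative term $\langle\phi_{j},C_{p}'\sigma^{2p+1}\phi_{j}\rangle$ vanishes; this is what kills the half-power coefficient, cf. \cite[Theorem 4.23]{DiSj99} for $p=1$. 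Your phrase ``a matching expansion of the eigenfunctions that cancels the $\cO(\alpha^{1/(2(p+1))})$ term, just as in the non-degenerate case'' skips this essential oddness/simplicity input, and without it the argument for the $\cO(1)$ remainder does not close (for the lower bound one also needs either the two-sided expansion of \cite{MarRou88} or a second-order quasi-mode plus a spectral-gap argument, not just improved test functions).
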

 \begin{proof}
Since we are not interested in exponentially small terms, it suffices by standard arguments to reduce the analysis to a neighborhood of the minimizer in $\Gamma$, denoted by $\Gamma_{0}$, with Dirichlet boundary conditions at the ends, see \cite{HS84}.
Let $\gamma :\RR/|\Gamma_{0}| \ZZ \to \Gamma_{0}$ be an arc-length parametrization of $\Gamma_{0}$. Since the parametrization is normalized and the metrics in local coordinates is $g=\|\gamma'\|$, we only have to consider $-\Delta-\alpha K$ on the interval $(s_{0}-\eta,s_{0}+\eta)$, with $\eta>0$ fixed, and Dirichlet boundary condition. The following asymptotics is then a simple consequence of \cite[Theorem 2.1]{MarRou88} applied with the semi-classical parameter $h=\alpha^{-1/2}$:
\[
E_{j}(-\Delta_{S}- \alpha K)=\alpha^{\frac{1}{p+1}}\left(e_{j}+\sum_{k\geq1} \beta_{j,k}\alpha^{-\frac{k}{2(p+1)}}\right), \quad \beta_{j,k}\in \RR.
\]
If $\partial\Omega$ is $C^{2p+4}$, then the curvature is $C^{2p+2}$, and we have the Taylor expansion
\[
K(s)=K(s_{0})-C_{p}(s-s_{0})^{2p}+C_{p}'(s-s_{0})^{2p+1}+\cO\Big((s-s_{0}\Big)^{2p+2}), \quad C_{p}'\in \RR.
\]
Then, by combining the simplicity of the eigenvalues $(e_{j})_{j\geq1}$, the parity of the eigenvectors of $-\partial_{s}^2+C_{p}s^{2p}$, and the oddness of the remainder $C_{p}'(s-s_{0})^{2p+1}$ in the asymptotic expansion of $K$, it is standard to show that $\beta_{j,2}=0$ for all $j\geq1$, see for example \cite[Theorem 4.23]{DiSj99} for the case $p=1$.
 \end{proof}
The combination of Proposition~\ref{P:degenerate} with Theorem~\ref{thm1} gives Corollary~\ref{C:degenerate}.

\begin{rem}
The above statements can be adapted easily to the case where $K$ has several maxima by using the principle that ``each well creates its own
series of eigenvalues''. 
\end{rem}

\begin{cor}
\label{C:simplicity}
Let $j \in \NN$, and assume \emph{one} of the two following:
\begin{itemize}
\item The hypotheses of Proposition \ref{P:nondegenerate} hold, and $e_{j}$ is of multiplicity 1 in the set $\cE$.
\item The hypotheses of Proposition \ref{P:degenerate} hold. 
\end{itemize}
Then, for $\alpha$ large enough, $E_{j}(Q_{\alpha}^{\Omega})$ is a simple eigenvalue.
\end{cor}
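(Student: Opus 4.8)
The plan is to derive the simplicity of $E_{j}(Q_{\alpha}^{\Omega})$ from a quantitative separation of $E_{j}(Q_{\alpha}^{\Omega})$ from its neighbours $E_{j-1}(Q_{\alpha}^{\Omega})$ and $E_{j+1}(Q_{\alpha}^{\Omega})$: under each of the two sets of hypotheses the levels $E_{k}(Q_{\alpha}^{\Omega})$ admit an asymptotic expansion in which the subleading term is $e_{k}\,\rho(\alpha)$ with $\rho(\alpha)\to+\infty$ and $e_{k}$ the $k$-th model level, so that a gap between consecutive model levels forces a gap between the corresponding eigenvalues which eventually exceeds the remainder. First I would record, for each fixed $k\in\{j-1,j,j+1\}$, the expansion
\[
E_{k}(Q_{\alpha}^{\Omega})=-\alpha^2-K_{\max}\alpha+e_{k}\,\rho(\alpha)+o\big(\rho(\alpha)\big),\qquad\alpha\to+\infty,
\]
where in the non-degenerate case $\rho(\alpha)=\alpha^{1/2}$, the $e_{k}$ are the elements of $\cE$ counted with multiplicity, and the remainder is $\cO(\alpha^{1/4})$ by Corollary~\ref{C:nondegenerate}; and in the degenerate case $\rho(\alpha)=\alpha^{\frac{1}{p+1}}$, the $e_{k}$ are the eigenvalues of $-\partial_{s}^2+C_{p}s^{2p}$ on $L^{2}(\RR)$, and the remainder is $\cO\big(\alpha^{\frac{1}{2(p+1)}}\big)$ by Corollary~\ref{C:degenerate} combined with Theorem~\ref{thm2} (which applies since a $C^{2p+3}$-admissible domain with $p\ge2$ is in particular $C^{3}$-admissible and $K$ attains its maximum). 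In both cases the remainder is $o\big(\rho(\alpha)\big)$.

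Next I would observe that the model levels are strictly ordered, $e_{j-1}<e_{j}<e_{j+1}$ (with the obvious modification when $j=1$): in the degenerate case this is the classical simplicity of the spectrum of the one-dimensional Schr\"odinger operator $-\partial_{s}^2+C_{p}s^{2p}$, and in the non-degenerate case it is exactly the assumption that $e_{j}$ has multiplicity one in $\cE$. Subtracting the expansions then yields
\[
E_{j}(Q_{\alpha}^{\Omega})-E_{j-1}(Q_{\alpha}^{\Omega})=(e_{j}-e_{j-1})\,\rho(\alpha)+o\big(\rho(\alpha)\big)\longrightarrow+\infty
\]
and, symmetrically, $E_{j+1}(Q_{\alpha}^{\Omega})-E_{j}(Q_{\alpha}^{\Omega})\to+\infty$, so there is $\alpha_{0}>0$ with $E_{j-1}(Q_{\alpha}^{\Omega})<E_{j}(Q_{\alpha}^{\Omega})<E_{j+1}(Q_{\alpha}^{\Omega})$ for $\alpha>\alpha_{0}$.

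The final step is to pass from this strict ordering to the statement on the multiplicity. Since \eqref{E:H3} is part of the hypotheses whenever $\partial\Omega$ is non-compact, Corollary~\ref{cor1a} (non-compact boundary) and the Remark following Corollary~\ref{cor1} (compact boundary) guarantee that, for $\alpha$ large, $E_{j-1}(Q_{\alpha}^{\Omega})$, $E_{j}(Q_{\alpha}^{\Omega})$, $E_{j+1}(Q_{\alpha}^{\Omega})$ all lie strictly below $E(Q_{\alpha}^{\Omega})$ and are therefore eigenvalues of $Q_{\alpha}^{\Omega}$; by the min-max principle the multiplicity of the eigenvalue $E_{j}(Q_{\alpha}^{\Omega})$ equals $\#\{k\in\NN:E_{k}(Q_{\alpha}^{\Omega})=E_{j}(Q_{\alpha}^{\Omega})\}$, which the strict inequalities of the previous step collapse to $\{j\}$. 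Hence $E_{j}(Q_{\alpha}^{\Omega})$ is a simple eigenvalue for $\alpha$ large. I do not expect a genuine obstacle here: the argument is purely a gap estimate between consecutive eigenvalues, and the only points needing care are the precise regularity and nondegeneracy conditions required by each cited corollary, together with the routine identification of ``$E_{j}$ is a simple eigenvalue'' with ``$E_{j-1}<E_{j}<E_{j+1}$''.
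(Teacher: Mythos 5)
Your proposal is correct and follows exactly the route the paper intends: the paper states this corollary without a separate proof, as an immediate consequence of the expansions of Corollaries \ref{C:nondegenerate} and \ref{C:degenerate} (i.e.\ Propositions \ref{P:nondegenerate}/\ref{P:degenerate} combined with Theorems \ref{thm1}/\ref{thm2}), the strict separation of the model levels $e_{j-1}<e_j<e_{j+1}$, and the min-max identification of multiplicity, which is precisely your argument.
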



When we are not in the hypotheses of Remark \ref{R:measuremax}, few results exist on the asymptotics of the first eigenvalues. For example, we can show
\begin{prop}\label{prop86}
Assume that the interior of $K^{-1}(\{K_{\max}\})$ is not empty. Then, for any fixed $j\in\NN$,
\[
E_{j}(-\Delta_{S}- \alpha K)=-\alpha K_{\max}+\cO(1) \text{ as } \alpha\to+\infty.
\]
\end{prop}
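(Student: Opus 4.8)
The plan is to reduce everything to the manifestly non-negative operator $-\Delta_S+\alpha(K_{\max}-K)$ via the trivial identity
\[
-\Delta_S-\alpha K=\big(-\Delta_S+\alpha(K_{\max}-K)\big)-\alpha K_{\max},
\]
so that $E_j(-\Delta_S-\alpha K)=E_j\big(-\Delta_S+\alpha(K_{\max}-K)\big)-\alpha K_{\max}$, and then to show that the $j^{\mbox{th}}$ Rayleigh quotient of the shifted operator stays bounded, both above and below, by constants independent of $\alpha$.

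The lower bound is immediate: since $K_{\max}-K\ge0$ on $S$, the quadratic form of $-\Delta_S+\alpha(K_{\max}-K)$ dominates that of $-\Delta_S$, which is non-negative; hence $E_j\big(-\Delta_S+\alpha(K_{\max}-K)\big)\ge0$ and therefore $E_j(-\Delta_S-\alpha K)\ge-\alpha K_{\max}$ for every $\alpha>0$.

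For the upper bound I would exploit the hypothesis that the open set $W:=\mathrm{int}\big(K^{-1}(\{K_{\max}\})\big)$ is non-empty. Choose $j$ functions $f_1,\dots,f_j\in C^\infty_c(W)$, none identically zero, with pairwise disjoint supports; this is possible since a non-empty open subset of $S$ contains $j$ disjoint geodesic balls. Each $f_i$ lies in the form domain $H^1(S)$, and on $\supp f_i\subset W$ the function $K_{\max}-K$ vanishes identically. Consequently, for $u=\sum_i c_if_i$ the disjointness of the supports annihilates all cross terms as well as the potential contribution, giving
\[
\int_S g^{\rho\mu}\partial_\rho u\,\partial_\mu u\,\dd S+\alpha\int_S(K_{\max}-K)u^2\,\dd S=\sum_{i=1}^j|c_i|^2\,b_i,\qquad b_i:=\int_S g^{\rho\mu}\partial_\rho f_i\,\partial_\mu f_i\,\dd S,
\]
while $\|u\|^2_{L^2(S,\dd S)}=\sum_i|c_i|^2\|f_i\|^2_{L^2(S,\dd S)}$. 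Testing in the min-max formula \eqref{E:QR} on $L=\mathrm{span}(f_1,\dots,f_j)$ then yields
\[
E_j\big(-\Delta_S+\alpha(K_{\max}-K)\big)\le\max_{1\le i\le j}\frac{b_i}{\|f_i\|^2_{L^2(S,\dd S)}}=:C,
\]
a constant independent of $\alpha$, whence $E_j(-\Delta_S-\alpha K)\le-\alpha K_{\max}+C$. Combining with the lower bound gives $E_j(-\Delta_S-\alpha K)=-\alpha K_{\max}+\cO(1)$.

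I do not anticipate any genuine obstacle: the only place the hypothesis intervenes is in selecting $j$ trial functions supported exactly where $K$ attains its maximum, which is precisely what $\mathrm{int}\big(K^{-1}(\{K_{\max}\})\big)\ne\emptyset$ provides. In contrast with Lemma~\ref{lemfk}, which only yields $-\alpha K_{\max}+o(\alpha)$, and with the test-function argument in the proof of Lemma~\ref{lem23}, where the well is touched only at a point and one pays an $r^{-2}$ localization cost, here the flatness of $K$ on an open set eliminates the localization cost entirely and forces the $\cO(1)$ remainder.
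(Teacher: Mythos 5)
Your proposal is correct and follows essentially the same route as the paper: a min--max upper bound with trial functions supported in the interior of $K^{-1}(\{K_{\max}\})$, where the potential contributes exactly $-\alpha K_{\max}\|u\|^2$ and the gradient term is independent of $\alpha$, combined with the trivial lower bound $-\Delta_S-\alpha K\ge -\alpha K_{\max}$. The only (cosmetic) difference is that the paper takes the first $j$ Dirichlet eigenfunctions of the Laplace--Beltrami operator on a smooth subdomain $\omega$ of that interior, which yields the explicit constant $E_j^D(\omega)$, whereas you use $j$ disjointly supported bump functions, which is equally valid.
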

\begin{proof}
Denote by $\omega\subset \partial\Omega$ an open subset of the interior of $K^{-1}\{K_{\max}\}$ with a smooth
boundary. Introduce $-\Delta^{D}_{\omega}$, the Laplace-Beltrami operator in $\omega$ with the Dirichlet boundary condition. This operator has compact resolvent and we denote by $E_{j}^{D}(\omega)$, $j\in\NN$, its eigenvalues,
and by $u_{j}$ associated normalized eigenfunctions.
Denote by $U_j$ the extensions of $u_{j}$ to $\partial\Omega$ by zero, then
\[
\int_S g^{\rho\mu} \partial_\rho U_j\partial_\mu U_j \dd S
-\alpha
\int_S K  U_j^2 \dd S = -\alpha K_{\max}+E_{j}^{D}(\omega).
\]
As $U_j$ are mutually ortohogonal in $L^2(S,\dd S)$,
we deduce from the min-max principle that
$E_{j}(-\Delta_{S}- \alpha K) \leq -\alpha K_{\max}+E_{j}^{D}(\omega)$,
and the sought estimate follows.
\end{proof}
In particular, in the situation of Proposition~\ref{prop86}
Theorems \ref{thm1} and \eqref{thm2}
does not provide the gap between the eigenvalues of $Q_{\alpha}^{\Omega}$
as $\alpha\to+\infty$.

We remark that a particular case of a piecewise constant curvature was recently studied in~\cite{kp14}, and
the eigenvalue gaps appear to have finite limits.

\section{Periodic case}\label{secper}

The preceding analysis can also be applied to periodic problems.
Namely, assume that there exist linearly independent vectors
$a_1, \dots, a_m$, $m\le \nu$, such that
$\Omega$ is invariant under the shifts $x\mapsto x+a_j$, $j\in\{1,\dots,m\}$,
and that the quotient (elementary cell) $\omega:=\Omega/(\ZZ a_1+\dots+\ZZ a_m)$ is compact, and then the
quotient surface $\sigma:=S/(\ZZ a_1+\dots+\ZZ a_m)$ is also compact.
Such a situation is covered by the Floquet theory \cite{kbook}.
Namely, for $\theta=(\theta_1,\dots,\theta_m)\in \TT^m$,
$\TT:=\RR/ 2\pi \ZZ$ denote by
$Q^\Omega_\alpha(\theta)$ the self-adjoint operator
generated by the quadratic form
\begin{gather*}
q^{\Omega,\theta}_\alpha(u,u):=\int_\omega |\nabla u|^2\dd x
-\alpha\int_\sigma u^2 \dd S,\\
\cD(q^{\Omega,\theta}_\alpha)
=H^1_\theta(\omega):=\Big\{u\in H^1_\mathrm{loc}(\Omega): u(\cdot+a_j)=e^{i\theta_j}u(\cdot), \quad j=1,\dots,m\Big\}.
\end{gather*}
It can be easily checked that the operators $Q^\Omega(\theta)$ are with compact resolvents,
and it is a standard fact of the Floquet theory that for each fixed $j\in \NN$
the so-called band function
\[
\TT^m\ni \theta \mapsto E_j(\theta,\alpha):=E_j\big(Q^\Omega_\alpha(\theta)\big)
\]
is continuous, and that
\[
\spec Q^\Omega_\alpha:= \bigcup_{j\in \NN} B_j (\alpha),
\quad
B_j(\alpha):=\big\{
E_j(\theta,\alpha):\, \theta\in \TT^m
\big\}.
\]
The segment $B_j(\alpha)$ is usually called the $j^{\mbox{th}}$ spectral band of $Q^\Omega_\alpha$.

An analogous representation of the spectrum holds for the reduced operator
$-\Delta_S-\alpha K$. Namely, denote by $T_\alpha(\theta)$ the self-adjoint operator
acting in $L^2(\sigma)$ associated with the quadratic form
\begin{gather*}
t_\alpha^\theta(v,v):= \int_\sigma g^{\rho\mu}\partial_\rho v \partial_\mu v \,\dd S
-\alpha \int_\sigma K v^2 \dd S,\\
\cD(t^\theta_\alpha)=H^1_\theta(\sigma):=
\Big\{u\in H^1_\mathrm{loc}(S): u(\cdot+a_j)=e^{i\theta_j}u(\cdot), \quad j=1,\dots,m\Big\}.
\end{gather*}
Again, one checks that $T_\alpha(\theta)$ have compact resolvents
and the band functions
\[
\TT^m\ni \theta\mapsto \varepsilon_j(\theta,\alpha):=E_j\big(T_\alpha(\theta)\big)
\]
are continuous and
\[
\spec (-\Delta_S-\alpha K):= \bigcup_{j\in \NN} \beta_j (\alpha),
\quad
\beta_j(\alpha):=\big\{
\varepsilon_j(\theta,\alpha):\, \theta\in \TT^m
\big\},
\]
and the segment $\beta_j(\alpha)$ will be called the $j^{\mbox{th}}$ spectral band of $-\Delta_S-\alpha K$.

One can easily see that the proofs of Theorems~\ref{thm1} and  \ref{thm2} also work
for the operators $Q^\Omega_\alpha(\theta)$, which gives the following results:
\begin{theorem}\label{thper}
For any fixed $j\in\NN$ there holds, as $\alpha\to +\infty$,
\[
E_j(\theta,\alpha)=-\alpha^2+\varepsilon_j(\theta,\alpha) +R_j(\theta,\alpha), \quad \theta\in \TT,
\]
where $R_j(\theta,\alpha)=\cO(\log \alpha)$ if $\Omega$ is $C^2$ and
$R_j(\theta,\alpha)=\cO(1)$ if $\Omega$ is $C^3$, and
the remainder estimate is uniform in $\theta\in \TT$. 
\end{theorem}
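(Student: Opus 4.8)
The plan is to observe that the whole machinery of Sections~\ref{sec1}--\ref{th2pr} goes through verbatim for each fibered operator $Q^\Omega_\alpha(\theta)$, the only modifications being that the tangential Sobolev space $H^1(S)$ is everywhere replaced by the quasi-periodic space $H^1_\theta(\sigma)$, and every integral over $S$ (resp.\ over $\Sigma=S\times(0,\delta)$) becomes an integral over the compact quotient $\sigma$ (resp.\ over $\sigma\times(0,\delta)$). Concretely, I would first set up the Dirichlet--Neumann bracketing on the elementary cell: for small $\delta>0$ split $\omega$ into the collar $\omega_\delta:=\Phi\big(\sigma\times(0,\delta)\big)$ and the interior piece $\vartheta_\delta:=\omega\setminus\overline{\omega_\delta}$, imposing Neumann, resp.\ Dirichlet, conditions on the interface $\partial\omega_\delta\setminus\sigma$ while keeping the $\theta$-quasi-periodic conditions on the lateral boundary of $\omega$ throughout. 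Since the interface neither sees nor affects the quasi-periodicity, the inclusions of form domains and the min-max principle give $E_j\big(B^{N,\delta}_\alpha(\theta)\big)\le E_j(\theta,\alpha)\le E_j\big(B^{D,\delta}_\alpha(\theta)\big)$ for all $j$ with the right-hand side negative, exactly as in Subsection~\ref{dtnbra}.

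Next I would apply the change of variables $\Phi$ of Subsection~\ref{S:CV}. As $\Phi$ is purely local near the boundary, it commutes with the lattice action and hence maps the $\theta$-quasi-periodic spaces into their analogues on $\sigma\times(0,\delta)$; the two-sided form bounds $h^-_\alpha\le h^N_\alpha$ and $h^D_\alpha\le h^+_\alpha$ hold with the \emph{same} constant $C_g$ depending only on $\|L_s\|_\infty$. The one-dimensional transverse operators $T^D$ and $T^\beta$ of Lemmas~\ref{lemd} and \ref{lemn} do not involve $\theta$ at all, so the estimates on $E^D$, $E^N$, $E^\beta$, $\psi(0)^2$, $\psi(\delta)^2$, $\|\psi'\|^2$ and the key orthogonality inequality \eqref{eq-fpsi} are unchanged. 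For the upper bound I would then test, as in Subsection~\ref{SS:proofub}, on product subspaces $u(s,t)=v(s)\psi(t)$ with $v$ running over $j$-dimensional subspaces of $H^1_\theta(\sigma)$ (the form domain of $T_\alpha(\theta)$), reproducing Lemma~\ref{L:estimatetensor} and Lemma~\ref{L:Ejreduit}; for the lower bound I would use the fiberwise splitting $u=v\psi+w$ with $v(s)=\int_0^\delta\psi(t)u(s,t)\,dt\in H^1_\theta(\sigma)$, reproducing Proposition~\ref{P:Lowerbound} and the argument of Subsection~\ref{SS:lb2}. In the $C^3$ case one redoes Section~\ref{th2pr} in the unweighted space; Lemma~\ref{lem23} and Lemma~\ref{L:vpopC3} continue to hold with $-\Delta_S$ replaced by $T_\alpha(\theta)$ (the test functions there are supported in a small coordinate ball, which lifts to $\sigma$ without wrapping, and $K_{\max}$ is the global maximum over $S$, equivalently over $\sigma$).

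The one point deserving genuine care is the uniformity in $\theta$, and here I would simply invoke the fact, stressed throughout the proofs of Theorems~\ref{thm1} and \ref{thm2}, that every $\mathcal O$-constant in the argument is controlled solely by the geometric quantities $\|L_s\|_\infty$, $\|K\|_\infty$, $\|\partial_t\varphi\|_\infty$, $\|\partial_t^2\varphi\|_\infty$ (plus, in the $C^3$ case, $\|\nabla_s\partial_t\varphi^{-1/2}\|_\infty$ and a few analogous quantities from Lemma~\ref{Linter}), none of which involve $\theta$. Hence, choosing $\delta=b\log\alpha/\alpha$ with $b\ge2$ as in \eqref{eq-dba} (resp.\ $\delta=\alpha^{-\kappa}$ with $\kappa\in[2/3,1)$ in the $C^3$ case) yields $\big|E_j(\theta,\alpha)+\alpha^2-\varepsilon_j(\theta,\alpha)\big|\le C\log\alpha$ (resp.\ $\le C$) with $C$ independent of $\theta$, which is precisely the claim. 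I do not expect a real obstacle: the only thing to watch is that no step secretly relied on compactness of $S$ in a way that would fail for $\sigma$ — but $\sigma$ is itself compact, so even that is automatic. The write-up therefore reduces to recording which spaces and operators replace which, and observing that the constants are geometric and hence $\theta$-uniform.
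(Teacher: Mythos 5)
Your proposal is correct and follows essentially the same route as the paper, which simply observes that the proofs of Theorems~\ref{thm1} and \ref{thm2} carry over verbatim to the fibered operators $Q^\Omega_\alpha(\theta)$; you spell out the fiberwise bracketing, change of variables, and product/decomposition arguments on the compact cell, and correctly identify that uniformity in $\theta$ comes from the $\mathcal O$-constants depending only on the geometric quantities of Lemma~\ref{Linter}, none of which involve $\theta$.
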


\begin{cor}
If $j\in\NN$ is fixed and $\alpha\to+\infty$, then the $j^{\mbox{th}}$ spectral band of $Q^\Omega_\alpha+\alpha^2$
and the $j^{\mbox{th}}$ spectral band of $-\Delta_S-\alpha K$ are located
in a $\cO\big(R(\alpha)\big)$-neighborhood of each other, where
$R(\alpha)=\log \alpha$ for the $C^2$-admissible case
and $R(\alpha)=1$ for the $C^3$-admissible one.
\end{cor}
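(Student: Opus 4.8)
The plan is to deduce the corollary directly from Theorem~\ref{thper}, since the latter already establishes the desired estimate at the level of each individual band function $\theta\mapsto E_j(\theta,\alpha)$, uniformly in $\theta\in\TT^m$. Recall that the $j^{\mbox{th}}$ spectral band of $Q^\Omega_\alpha+\alpha^2$ is the segment $B_j(\alpha)+\alpha^2=\{E_j(\theta,\alpha)+\alpha^2:\theta\in\TT^m\}$ and the $j^{\mbox{th}}$ spectral band of $-\Delta_S-\alpha K$ is $\beta_j(\alpha)=\{\varepsilon_j(\theta,\alpha):\theta\in\TT^m\}$; both are indeed compact intervals because the band functions are continuous and $\TT^m$ is connected and compact.

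\medskip

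\noindent\textbf{Main argument.} By Theorem~\ref{thper} there is a function $R(\alpha)$, equal to $\log\alpha$ in the $C^2$ case and to $1$ in the $C^3$ case, and a constant $C>0$ (independent of $\theta$ and $\alpha$) such that
\[
\big| E_j(\theta,\alpha)+\alpha^2-\varepsilon_j(\theta,\alpha)\big|\le C\,R(\alpha)
\quad\text{for all }\theta\in\TT^m
\]
and all $\alpha$ large. Fix $\alpha$. Pick any point $x$ in the band $B_j(\alpha)+\alpha^2$, say $x=E_j(\theta,\alpha)+\alpha^2$ for some $\theta$. Then $\varepsilon_j(\theta,\alpha)\in\beta_j(\alpha)$ and $|x-\varepsilon_j(\theta,\alpha)|\le C R(\alpha)$, so $x$ lies in the $CR(\alpha)$-neighborhood of $\beta_j(\alpha)$. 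Symmetrically, any point of $\beta_j(\alpha)$ is of the form $\varepsilon_j(\theta,\alpha)$ for some $\theta$, and then $E_j(\theta,\alpha)+\alpha^2\in B_j(\alpha)+\alpha^2$ is within $CR(\alpha)$ of it. Hence each of the two bands is contained in the $CR(\alpha)$-neighborhood of the other, which is exactly the assertion.

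\medskip

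\noindent\textbf{On the absence of obstacles.} There is essentially no hard step here: the corollary is a reformulation of the uniform-in-$\theta$ estimate of Theorem~\ref{thper} in terms of Hausdorff distance between the band intervals. The only point that deserves a word is why Theorem~\ref{thper} holds with a $\theta$-independent remainder, and this is already asserted in its statement; it follows because the constants appearing throughout Sections~\ref{sec1}--\ref{th2pr} depend only on the quantities $\|L_s\|_\infty$, $\|K\|_\infty$, $\|\partial_t\varphi\|_\infty$, $\|\partial_t^2\varphi\|_\infty$ (and, in the $C^3$ case, the additional derivative bounds), all of which are geometric quantities attached to $S$ and hence unaffected by the Floquet parameter $\theta$; the quasi-periodic boundary condition only changes the form domain $H^1_\theta$, not the differential expression nor the change-of-variables estimates. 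Thus the whole machinery applies verbatim to $Q^\Omega_\alpha(\theta)$ and to $T_\alpha(\theta)$, giving the uniform bound, and the corollary is immediate.
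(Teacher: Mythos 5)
Your argument is correct and is exactly the route the paper intends: the corollary is an immediate consequence of Theorem~\ref{thper}, since the uniform-in-$\theta$ bound $|E_j(\theta,\alpha)+\alpha^2-\varepsilon_j(\theta,\alpha)|\le C\,R(\alpha)$ places each point of either band within $C\,R(\alpha)$ of a point of the other band. The paper gives no separate proof precisely because this two-line deduction, together with the observation that the remainder in Theorem~\ref{thper} is $\theta$-independent, is all that is needed, and your justification of that uniformity (the constants depend only on geometric quantities of $S$, not on the Floquet condition) matches the paper's reasoning.
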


The result of Theorem \ref{thper} can be used to study some spectral properties
specific for periodic operators. Recall that  a non-empty interval $(a,b)\subset \RR$ is called a (spectral) gap of a self-adjoint operator $A$
if $(a,b)\cap \spec A=\emptyset$ but $a,b\in \spec A$. The existence of spectral gaps
is one of the principal questions in the spectral theory of periodic operators,
cf.~\cite{bp,hepo,khrab,naz}. In view of Theorem \ref{thper},
the existence of sufficiently large gaps for the reduced operator
$-\Delta -\alpha K$ (i.e. having the length of order $\alpha^\kappa$ with some $\kappa>0$)
implies the existence of gaps
for the Robin Laplacian $Q^\Omega_\alpha$, and the reduced operator was studied
in numerous preceding works, cf.~\cite{out,shubin}. For, example the semiclassical
analysis of periodic operators of the form $-h\Delta_S+V/h$ carried out
in \cite[Theorem~1.1]{shubin} gives the following result:

\begin{cor}
Assume that $\Omega$ is $C^\infty$ and periodic as described above.
Furthermore, assume that the function $\sigma\ni s\mapsto K(s)$ admits a unique maximum at $s_{0}$,
and that the Hessian of $(-K)$ at $s_{0}$ is positive-definite. Let $\mu_j$ be the eigenvalues of the Hessian
and  the numbers $e_{j}$ be defined as in Proposition \ref{P:nondegenerate},
then
\begin{enumerate}
\item
for each $j\in \NN$ there exists $C>0$ such that 
\[
\spec (Q^\Omega_\alpha+\alpha^2 + K_{\max}\alpha) \cap \big[e_j \alpha^{1/2} - C \alpha^{2/5}, e_j \alpha^{1/2} + C \alpha^{2/5}\big]\ne \emptyset
\]
for large $\alpha$, and
\item
for each $C_1>0$ there exist $C_2,C_3>0$ such that
\begin{multline*}
\spec (Q^\Omega_\alpha+\alpha^2 + K_{\max} \alpha) \cap \big[-C_1\alpha^{1/2},C_1 \alpha^{1/2}\big]\\
 \subset
 \bigcup_{e_j\le C_3} \big[e_j \alpha^{1/2} - C_2 \alpha^{2/5},e_j \alpha^{1/2} + C_2 \alpha^{2/5}\big]
\end{multline*}
as $\alpha\to+\infty$.
\end{enumerate}
In particular, for any $N\in \NN$ there exists $\alpha_N>0$ such that
the operator $Q^\Omega_\alpha$ has at least $N$ gaps for $\alpha>\alpha_N$.
\end{cor}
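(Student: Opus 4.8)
The strategy is to transfer the known semiclassical band structure of the reduced operator $-\Delta_S-\alpha K$ to $Q^\Omega_\alpha$ by means of Theorem~\ref{thper}, and then to read off the gaps. Since $\Omega$ is $C^\infty$, it is in particular $C^3$-admissible, so Theorem~\ref{thper} gives, for each fixed $j$ and uniformly in $\theta\in\TT^m$,
\[
E_j(\theta,\alpha)+\alpha^2=\varepsilon_j(\theta,\alpha)+\cO(1),\qquad\alpha\to+\infty .
\]
Hence the $j^{\mbox{th}}$ spectral band of $Q^\Omega_\alpha+\alpha^2$ and that of $-\Delta_S-\alpha K$ lie within Hausdorff distance $\cO(1)$ of one another (the constant depending on $j$). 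Because $\cO(1)=o(\alpha^{2/5})$, it suffices to establish (1) and (2), and the gap count, for $-\Delta_S-\alpha K$ in place of $Q^\Omega_\alpha+\alpha^2$, and then absorb the $\cO(1)$ error into the constants.

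Next I would put the reduced operator into semiclassical form: with $h:=\alpha^{-1/2}$ and $V:=-K$ one has
\[
-\Delta_S-\alpha K=\alpha^{1/2}\Bigl(-h\Delta_S+\tfrac{1}{h}V\Bigr),
\]
so that $\spec(-\Delta_S-\alpha K)=\alpha^{1/2}\spec\bigl(-h\Delta_S+V/h\bigr)$, and the operator $-h\Delta_S+V/h$ is exactly of the type treated in \cite[Theorem~1.1]{shubin}: $V$ is smooth and periodic with respect to the lattice, $\inf_\sigma V=-K_{\max}$ is attained only at $s_0$, and $\mathrm{Hess}\,V(s_0)=-\mathrm{Hess}\,K(s_0)$ is positive-definite with eigenvalues $\mu_k$. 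That theorem describes the spectrum of $-h\Delta_S+V/h$ in a fixed-size window around its bottom $-K_{\max}/h$: for every $C_1'>0$ there are $C_2',C_3'>0$ such that, for $h$ small, this spectrum is contained in $\bigcup_{e_j\le C_3'}\bigl[-K_{\max}/h+e_j-C_2'h^{1/5},\,-K_{\max}/h+e_j+C_2'h^{1/5}\bigr]$, each such cluster being non-empty, where the $e_j$ are the elements of the set $\cE$ of Proposition~\ref{P:nondegenerate} repeated with multiplicity. Multiplying through by $\alpha^{1/2}=h^{-1}$, adding $K_{\max}\alpha$, and using $h^{-1}h^{1/5}=\alpha^{2/5}$ together with $h^{-1}C_1'=C_1'\alpha^{1/2}$, this becomes: for every $C_1>0$ there are $C_2,C_3>0$ with
\[
\bigl(\spec(-\Delta_S-\alpha K)+K_{\max}\alpha\bigr)\cap[-C_1\alpha^{1/2},C_1\alpha^{1/2}]\ \subset\ \bigcup_{e_j\le C_3}\bigl[e_j\alpha^{1/2}-C_2\alpha^{2/5},\,e_j\alpha^{1/2}+C_2\alpha^{2/5}\bigr],
\]
each cluster meeting the spectrum. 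Combined with the first step (enlarging the constants to serve both purposes) this gives assertions (1) and (2).

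Finally I would count the gaps. The set $\cE$ is discrete and tends to $+\infty$, so its distinct values form a strictly increasing sequence $v_1<v_2<\cdots$. Given $N\in\NN$, choose $C_1$ so large that $v_{N+1}<C_1/2$, and let $C_2,C_3$ be the resulting constants from (2); then $C_3\ge v_{N+1}$, since each non-empty cluster situated inside the window $[-C_1\alpha^{1/2},C_1\alpha^{1/2}]$ must appear on the right-hand side of the inclusion in (2) (the clusters around distinct values are pairwise disjoint for large $\alpha$). For $1\le k\le N$, assertion (1) supplies a non-empty piece of $\spec(Q^\Omega_\alpha+\alpha^2+K_{\max}\alpha)$ inside $I_k:=[v_k\alpha^{1/2}-C_2\alpha^{2/5},v_k\alpha^{1/2}+C_2\alpha^{2/5}]$, and likewise for $k+1$; let $b_k:=\sup(\spec\cap I_k)$ and $a_{k+1}:=\inf(\spec\cap I_{k+1})$, so $b_k,a_{k+1}\in\spec$. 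By (2) the open interval $(b_k,a_{k+1})$, which lies in $[0,C_1\alpha^{1/2}]$ and strictly between the cluster at $v_k\alpha^{1/2}$ and all higher clusters, contains no spectrum, while $a_{k+1}-b_k\ge(v_{k+1}-v_k)\alpha^{1/2}-2C_2\alpha^{2/5}>0$ for $\alpha$ large. Thus each $(b_k,a_{k+1})$ is a genuine gap of $Q^\Omega_\alpha$, and the $N$ of them are pairwise disjoint, which is the last claim.

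The main obstacle is not the logic but the bookkeeping across normalizations: one must quote \cite[Theorem~1.1]{shubin} in the form $-h\Delta_S+V/h$ and carry the rescaling $h=\alpha^{-1/2}$, $\spec(-\Delta_S-\alpha K)=\alpha^{1/2}\spec(-h\Delta_S+V/h)$, through carefully — in particular confirming that the cluster half-width $\cO(h^{1/5})$ becomes $\cO(\alpha^{2/5})$ and the fixed window becomes one of width $\cO(\alpha^{1/2})$ — and one must use that the remainder in Theorem~\ref{thper} is uniform in the quasimomentum $\theta$, which is what licenses the band-by-band comparison and the absorption of the $\cO(1)$ error.
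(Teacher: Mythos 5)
Your argument is correct and is essentially the paper's own route: the paper states this corollary without a written proof, as the direct combination of Theorem~\ref{thper} (with the $\cO(1)$ remainder, uniform in $\theta$, in the $C^3$ case) and the semiclassical clustering of \cite[Theorem~1.1]{shubin} applied to $-h\Delta_S+V/h$ with $h=\alpha^{-1/2}$, $V=-K$, followed by exactly the gap count you spell out. The one point worth making explicit when you ``absorb the $\cO(1)$'' in assertion (2) is that the remainder in Theorem~\ref{thper} is for each \emph{fixed} band index, so one should first fix $J$ with $e_J>C_1+1$ and verify, via the reduced operator and monotonicity of the band functions, that all bands of index $\ge J$ stay above the window $[-C_1\alpha^{1/2},C_1\alpha^{1/2}]$ for large $\alpha$, so that only finitely many bands (independently of $\alpha$) enter the comparison and the constants can indeed be taken uniform.
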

The localization of the spectrum given in the preceding corollary is not expected
to be optimal for periodic domains. Furthermore, it would be interesting to understand some
questions related to the location of the extrema of the band functions, cf.~\cite{bp}.
 We hope to analyze the periodic case in greater detail in subsequent works.

\end{document}